\theoremstyle{plain}
\newtheorem{theorem}{Theorem}[section]
\newtheorem{coro}[theorem]{Corollary}
\newtheorem{lemma}[theorem]{Lemma}
\newtheorem{prop}[theorem]{Proposition}
\newtheorem{obsv}[theorem]{Observation}
\newtheorem{conj}[theorem]{Conjecture}
\theoremstyle{definition}
\newtheorem{example}[theorem]{Example}
\newtheorem{remark}[theorem]{Remark}
\newtheorem{other}{}
\newcommand{\one}{\hbox{1\hskip -4pt 1}}
\title{\bf On a relationship between the characteristic and matching polynomials of a uniform hypertree}
\author{
Honghai Li,
\quad  Li Su
\\
\small   School of  Mathematics and Statistics, Jiangxi Normal University,  Nanchang\\
\small  Jiangxi 330022,  China. email: \tt{lhh$@$mail.ustc.edu.cn; suli$@$jxnu.edu.cn} \\
Shaun Fallat\footnote{Corresponding Author}
\\ 
\small Department of Mathematics and Statistics, University of Regina, Regina\\
\small Saskatchewan, S4S 0A2, Canada. email: \tt{shaun.fallat$@$uregina.ca}\\}
\date{}
\begin{document}
\maketitle
\begin{abstract}
A hypertree is a connected hypergraph without cycles. Further a hypertree is called an $r$-tree if, additionally, it is $r$-uniform. Note that 2-trees are just ordinary trees.  A classical result  states that for any 2-tree $T$ with characteristic polynomial $\phi_T(\lambda)$ and matching polynomial $\varphi_T(\lambda)$, then
$\phi_T(\lambda)=\varphi_T(\lambda).$
More generally, suppose $\mathcal{T}$ is an $r$-tree of size $m$ with $r\geq2$. In this paper, we extend the above classical relationship to $r$-trees and establish that
\[
\phi_{\mathcal{T}}(\lambda)=\prod_{H  \sqsubseteq \mathcal{T}}\varphi_{H}(\lambda)^{a_{H}},
\]
where the product is over all connected   subgraphs $H$ of $\mathcal{T}$, and the exponent $a_{H}$  of the factor $\varphi_{H}(\lambda)$ can be written as
\[
a_H=b^{m-e(H)-|\partial(H)|}c^{e(H)}(b-c)^{|\partial(H)|},
 \]
where $e(H)$ is the size of $H$, $\partial(H)$ is the boundary of  $H$, and $b=(r-1)^{r-1}, c=r^{r-2}$. In particular, for  $r=2$, the above correspondence reduces to the classical result for ordinary trees. In addition, we resolve a conjecture by Clark-Cooper [{\em Electron. J. Combin.}, 2018] and show that for any subgraph $H$ of an $r$-tree $\mathcal{T}$ with $r\geq3$,
$\varphi_H(\lambda)$ divides $\phi_{\mathcal{T}}(\lambda)$, and additionally
$\phi_H(\lambda)$ divides $\phi_{\mathcal{T}}(\lambda)$,
if either $r\geq 4$ or $H$ is connected when $r=3$.  Moreover, a counterexample is given for the case when $H$ is a disconnected subgraph of a 3-tree.

\vspace{3mm}

\noindent {\it MSC classification}\,: 15A18, 05C65, 05C50

\vspace{2mm}

\noindent {\it Keywords}\,: Hypertree; hypergraph; characteristic polynomial; matching polynomial; tensors; resultant; toppling; chip-firing game; directed graph.

\end{abstract}

\section{Introduction}

Let $G$ be a simple, undirected  graph of order $n$. The characteristic polynomial, $\phi_{G}(\lambda)$, of   $G$   is the characteristic polynomial of its adjacency matrix $A(G)$, i.e. $\phi_{G}(\lambda)=\mathrm{Det}(\lambda I-A(G))$.  The matching polynomial, $\varphi_{G}(\lambda)$,  of   $G$   is defined to be
\[
\varphi_{G}(\lambda)=\sum\limits_{k\geq 0}(-1)^{k}m_{G}(k)\lambda^{n-2k},
\]
where $m_{G}(k)$ is the number of $k$-matchings in $G$. In Godsil~\cite{AlgComb}, these definitions suggest that the characteristic polynomial is an algebraic object and the matching polynomial a combinatorial one, however, these two polynomials are closely related and therefore have been studied together. A classical result concerning these polynomials states:
\begin{theorem}[\cite{AlgComb}]\label{forestcharmatchpoly}
If $G$ is a tree, then $\phi_{G}(\lambda)=\varphi_{G}(\lambda)$.
\end{theorem}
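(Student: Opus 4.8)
The plan is to prove the identity by induction on the order of $G$, exploiting the fact that both polynomials obey the same recurrence once one deletes a pendant vertex. It is convenient to prove the slightly stronger statement that $\phi_F(\lambda)=\varphi_F(\lambda)$ for every \emph{forest} $F$, since the graphs produced by vertex deletion below will be forests rather than trees; note that both $\phi$ and $\varphi$ are multiplicative over connected components, so the two problems are equivalent.

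First I would record the two deletion formulas. For the matching polynomial, if $v$ is a vertex with neighbours $u_1,\dots,u_d$, then splitting the $k$-matchings according to whether $v$ is unsaturated or is matched to some $u_i$ gives
\[
\varphi_G(\lambda)=\lambda\,\varphi_{G-v}(\lambda)-\sum_{i=1}^{d}\varphi_{G-v-u_i}(\lambda).
\]
For the characteristic polynomial, the analogous Schwenk-type identity, derivable from the permutation (Sachs) expansion of $\mathrm{Det}(\lambda I-A(G))$, reads
\[
\phi_G(\lambda)=\lambda\,\phi_{G-v}(\lambda)-\sum_{i=1}^{d}\phi_{G-v-u_i}(\lambda)-2\sum_{C\ni v}\phi_{G-V(C)}(\lambda),
\]
where the last sum ranges over all cycles $C$ through $v$. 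I would either cite this or sketch its derivation from the cycle-cover interpretation of the determinant.

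The crucial observation is that a forest has no cycles. A forest with at least one edge has a leaf $v$; choosing such a $v$, the cycle sum is empty and $v$ has a unique neighbour $u$, so both recurrences collapse to the \emph{same} two-term relation
\[
\phi_G(\lambda)=\lambda\,\phi_{G-v}(\lambda)-\phi_{G-v-u}(\lambda),\qquad
\varphi_G(\lambda)=\lambda\,\varphi_{G-v}(\lambda)-\varphi_{G-v-u}(\lambda).
\]
Since $G-v$ and $G-v-u$ are forests on fewer vertices, the inductive hypothesis gives $\phi_{G-v}=\varphi_{G-v}$ and $\phi_{G-v-u}=\varphi_{G-v-u}$, whence $\phi_G=\varphi_G$. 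The base cases — the empty graph ($\phi=\varphi=1$) and the forest with no edges, $\overline{K_n}$ ($\phi=\varphi=\lambda^n$) — are immediate, so the induction closes.

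There is essentially no hard step here; the only point requiring care is the bookkeeping across components, i.e.\ proving the forest version so that the deleted subgraphs fall under the inductive hypothesis, together with pinning down the correct sign/multiplicity ($-2$) on the cycle term of the $\phi$-recurrence so that its vanishing in the acyclic case is genuine. An entirely parallel and perhaps cleaner route bypasses recurrences: invoke the Sachs coefficient theorem, which writes $[\lambda^{n-k}]\phi_G$ as a signed, weighted count of spanning "basic figures" on $k$ vertices — disjoint unions of edges and cycles — and observe that in a forest the only such figures are matchings, so this coefficient reduces to $(-1)^{k/2}m_G(k/2)$, i.e.\ exactly the coefficients of $\varphi_G$. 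Either approach makes Theorem~\ref{forestcharmatchpoly} fall out with little computation; the substantive work of the paper is the $r$-uniform generalization, not this base case.
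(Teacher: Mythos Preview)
Your proof is correct and is the standard textbook argument. Note, however, that the paper does not give its own proof of Theorem~\ref{forestcharmatchpoly}: it is stated in the introduction as a classical result, cited from Godsil~\cite{AlgComb}, and serves only as motivation for the paper's generalization to $r$-trees. The only sense in which the paper recovers it is the remark immediately following Theorem~\ref{thm_main}, observing that when $r=2$ one has $b-c=(r-1)^{r-1}-r^{r-2}=0$, so $a_H=0$ for every connected proper subgraph $H$ of $\mathcal{T}$ while $a_{\mathcal{T}}=1$, and hence the product $\prod_{H\sqsubseteq\mathcal{T}}\varphi_H^{a_H}$ collapses to the single factor $\varphi_{\mathcal{T}}$. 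That route passes through the full apparatus of resultants, toppled digraphs, and chip-firing developed in Sections~3 and~4, and is certainly not offered as an independent proof of the $r=2$ case; your direct argument via leaf-deletion recurrences (or, equivalently, the Sachs coefficient theorem) is exactly what one finds in the cited reference.
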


A hypergraph is a generalization of an undirected graph in which the edges are arbitrary subsets of the set of vertices. Tensors (also called hypermatrices) are natural generalizations of vectors and matrices.
 A vector can be represented
as an array indexed by single subscript, a matrix by two, whereas a higher order tensor
is indexed by a sequence of several subscripts.   The notion of
an eigenvalue of a tensor, as a natural generalization of an eigenvalue of a square
matrix, was proposed independently by Qi \cite{qi05} and Lim
\cite{Lim05}.
Cooper and Dutle~\cite{CoopDut12} investigated the spectrum and characteristic polynomial of a uniform hypergraph based on the adjacency tensor. Further, Shao et al.~\cite{ShaoQiHu} provided some properties on the characteristic
polynomial of hypergraphs whose spectrum are $k$-symmetric.  Additionally, Clark and  Cooper~\cite{ClarkCoop21}  generalized the Harary-Sachs theorem to  uniform hypergraphs, and
Bao et al.~\cite{BaoFanWang} produced  a combinatorial method for computing characteristic polynomials of
starlike hypergraphs.  More recently, Chen and Bu~\cite{ChenBu}   gave a reduction formula for the characteristic polynomial of a uniform hypergraph with pendant edges and then derived an explicit expression for the characteristic polynomial and all distinct eigenvalues of uniform loose paths.
Finally, Zheng~\cite{complete3-graph} computed the characteristic polynomial of a complete 3-uniform hypergraph.

The aim of this paper is to generalize  Theorem~\ref{forestcharmatchpoly} to  uniform hypertrees. We begin with a careful review and survey of existing terms, notation, and necessary related results. In Section 3, we provide the framework for our analysis, and include a discussion on certain connections to digraphs, the process of {\em toppling}, and recurrent configurations. In Section 4, we state and establish the main purpose of this work by proving a relation between the matching polynomial and characteristic polynomial for $r$-trees. Finally, we close with some implications and applications of this work.

\section{Preliminaries}

In this section we provide all of the necessary background, including key terms and existing results that will be needed for the subsequent analysis.
\subsection{Resultants}

Given $n+1$ homogeneous polynomials $F_0, \ldots, F_n\in \mathcal{K}[x_0, \ldots, x_n]$ over a field $\mathcal{K}$, consider the system of equations
$$F_0(x_0, \ldots, x_n)=\cdots=F_n(x_0, \ldots, x_n)=0$$
and set $V(F_0, \ldots, F_n)=\{(a_0, \ldots, a_n)\in \mathcal{K}^{n+1}\,\,:\,\, F_i(a_0, \ldots, a_n)=0\,\,\mathrm{for\,\, all}\,\,0\leq i\leq n\}$, which is called the \textit{affine variety} (or simply a \textit{variety}, when there is no confusion with the projective variety) determined by $F_0, \ldots, F_n$.
Since each $F_i$ is homogeneous and of positive total degree, say $d_i$ and $F_i=\sum_{|\alpha|=d_i}c_{i, \alpha}x^{\alpha}$, the system of equations always has the (trivial) solution $x_0=\cdots=x_n=0$, and the surface defined by   $V(F_0, \ldots, F_n)$  has a conic shape so we employ the projective space $\mathbb{P}^n$ to account for the nontrivial solutions of this system of polynomials. Projectively, a system of $k$ polynomials in general position has a
$n-k$ dimensional space of solutions. If $k=n+1$, then   $n-k=-1$, which implies that for generic values of the coefficients, the system has no projective solutions. However, for certain values of the coefficients such a system can have more solutions than in general position. This is an important phenomenon called {\em degeneration}. For example, when the polynomials $F_i$   are all linear,  a degenerate system simply means that the determinant of the coefficient matrix vanishes. For each monomial $x^{\alpha}$ of degree $d_i$, we introduce a variable $u_{i, \alpha}$. Let $M$ be the total number of these variables, so that $\mathbb{C}^M$ is an affine space with coordinates $u_{i, \alpha}$ for all $0\leq i\leq n$ and $|\alpha|=d_i$. A point of $\mathbb{C}^M$ will be written  $(c_{i, \alpha})$. Then consider the ``universal" polynomials
 \begin{equation}\label{universalpoly}
\mathbf{F}_i=\sum_{|\alpha|=d_i}u_{i, \alpha}x^{\alpha},\qquad i=0, \ldots, n,
 \end{equation}
in the product $\mathbb{C}^M\times \mathbb{P}^n$. A point $(c_{i, \alpha}, a_0,\ldots, a_n)\in \mathbb{C}^M\times \mathbb{P}^n$ can be regarded as $n+1$ homogeneous polynomials and a point of $\mathbb{P}^n$. The ``universal" polynomials $\mathbf{F}_i$ are actually polynomials on $\mathbb{C}^M\times \mathbb{P}^n$, which produce the subset $W=\mathbf{V}(\mathbf{F}_0, \ldots, \mathbf{F}_n)$. Consider the natural projection map $\pi: \mathbb{C}^M\times \mathbb{P}^n \rightarrow \mathbb{C}^M$ defined by $\pi(c_{i, \alpha}, a_0,\ldots, a_n)=(c_{i, \alpha})$, and under this projection, the variety $W\subset \mathbb{C}^M\times \mathbb{P}^n$ maps to
\begin{eqnarray*}
\pi(W) &=& \{(c_{i, \alpha})\in \mathbb{C}^M: ~\exists (a_0,\ldots, a_n)\in \mathbb{P}^n \,\, \mathrm{such\, that}\,\, (c_{i, \alpha}, a_0,\ldots, a_n)\in W\},
\end{eqnarray*}
which coincides with the collections of equations $F_0=\cdots=F_n=0 $ with degrees $d_0, \ldots , d_n$ that has a nontrivial solution.
By the Projective Extension Theorem (see, for example, Theorem 6 in \S5 of Chapter 8 of \cite{CoxLittleShea-ideals}), $\pi(W)$ is a variety in $\mathbb{C}^M$ and thus the existence of a nontrivial solution of $F_0=\cdots=F_n=0$ is determined by polynomial conditions on the coefficients of $F_0, \ldots, F_n$. Furthermore, it was known that the variety $\pi(W)$ is irreducible of dimension $M-1$ and hence it must be defined by exactly one irreducible equation as stated concretely in the following well-known result.

 \begin{prop}[\cite{CoxLittleShea}]
If we fix positive degrees $d_0,\ldots, d_n$, then there is a
unique polynomial  $\mathrm{Res}\in \mathbb{Z}[u_{i, \alpha}]$ which satisfies the following properties:
 \begin{enumerate}
   \item If $F_0, \ldots, F_n\in \mathbb{C}[x_0, \ldots, x_n]$ are homogeneous of degrees $d_0,\ldots, d_n$,
then the equations $F_0=\cdots=F_n=0$ have a nontrivial solution over $\mathbb{C}$ if and only if
$\mathrm{Res}(F_0, \ldots, F_n) = 0$;
   \item  $\mathrm{Res}(x_0^{d_0}, \ldots, x_n^{d_n}) = 1$;
   \item $\mathrm{Res}$ is irreducible, even when regarded as a polynomial in $\mathbb{C}[u_{i, \alpha}]$.
 \end{enumerate}
\end{prop}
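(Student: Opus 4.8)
The plan is to derive all three properties from the two geometric facts already in hand, namely that $\pi(W)$ is an irreducible subvariety of $\mathbb{C}^M$ of codimension one (dimension $M-1$). The strategy is: first convert ``irreducible hypersurface'' into ``single irreducible defining polynomial'' to obtain existence together with properties (1) and (3); then fix the remaining scalar ambiguity by a normalization at the monomial system to obtain property (2) and uniqueness; and finally address the integrality of the coefficients, which I expect to be the real difficulty.

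First I would establish existence and properties (1) and (3) simultaneously. Since $\mathbb{C}[u_{i,\alpha}]$ is a polynomial ring over a field, it is a unique factorization domain, and the vanishing ideal $I(\pi(W))$ of the irreducible codimension-one variety $\pi(W)$ is a height-one prime; in a UFD every height-one prime is principal and generated by an irreducible element. Writing $I(\pi(W)) = (R)$ with $R$ irreducible over $\mathbb{C}$, primality makes $(R)$ radical, so $V(R) = \pi(W)$. Unwinding the description of $\pi(W)$ recalled just above, a coefficient vector $(c_{i,\alpha})$ satisfies $R(c_{i,\alpha}) = 0$ precisely when the corresponding system $F_0 = \cdots = F_n = 0$ has a nontrivial projective solution; this is property (1), while irreducibility of $R$ over $\mathbb{C}$ is property (3). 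Because any two generators of the same principal ideal differ by a unit of $\mathbb{C}[u_{i,\alpha}]$, and the units are the nonzero constants, $R$ is determined up to a nonzero scalar.

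To pin down that scalar and obtain property (2), I would evaluate at the coefficient vector of the monomial system $x_0^{d_0}, \ldots, x_n^{d_n}$, for which $u_{i,\alpha} = 1$ when $\alpha = d_i e_i$ and $u_{i,\alpha} = 0$ otherwise. The equations $x_0^{d_0} = \cdots = x_n^{d_n} = 0$ force $x_0 = \cdots = x_n = 0$, which is not a point of $\mathbb{P}^n$, so this vector lies outside $\pi(W) = V(R)$; hence $R$ is nonzero there. (This simultaneously confirms that $\pi(W)$ is a proper subvariety.) Dividing $R$ by this nonzero value yields a generator whose value at the monomial point is $1$, which is property (2); since a generator is unique up to scalar, this normalization determines it completely, giving the uniqueness claim.

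The genuine obstacle is the assertion that the normalized polynomial lies in $\mathbb{Z}[u_{i,\alpha}]$ rather than merely in $\mathbb{C}[u_{i,\alpha}]$. Here I would use descent: the universal polynomials $\mathbf{F}_i$, the incidence variety $W$, and the projection $\pi$ are all defined over $\mathbb{Q}$ (indeed over $\mathbb{Z}$), so $\pi(W)$ is defined over $\mathbb{Q}$ and stays irreducible over $\overline{\mathbb{Q}}$; its vanishing ideal is therefore stable under $\mathrm{Gal}(\overline{\mathbb{Q}}/\mathbb{Q})$, and a generator can be chosen in $\mathbb{Q}[u_{i,\alpha}]$. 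Clearing denominators and removing content via Gauss's Lemma produces a primitive generator $\mathrm{Res}_0 \in \mathbb{Z}[u_{i,\alpha}]$, unique up to sign. What remains, and where the real work lies, is to check that the normalization value $\mathrm{Res}_0(x_0^{d_0}, \ldots, x_n^{d_n})$ equals $\pm 1$, so that imposing property (2) does not reintroduce denominators. I would verify this either by an explicit local analysis of the defining equation near the monomial system, or via the Poisson product expansion writing $\mathrm{Res}$ as a product of the values of $F_n$ over the common projective zeros of $F_0, \ldots, F_{n-1}$: at the monomial point those zeros collapse to the single point $[0:\cdots:0:1]$, where $x_n^{d_n}$ evaluates to $1$, so the product is $1$. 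Carrying out this transversality/primitivity computation rigorously is the crux of the argument.
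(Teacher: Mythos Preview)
The paper does not prove this proposition; it is quoted verbatim from \cite{CoxLittleShea} as background, so there is no in-paper proof to compare against. Your sketch follows the standard line of argument from that reference: use irreducibility and codimension one of $\pi(W)$ to get a principal prime ideal in the UFD $\mathbb{C}[u_{i,\alpha}]$, then normalize at the monomial system.

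The one place where your proposal is shaky is the integrality step. Invoking the Poisson product formula to show $\mathrm{Res}_0(x_0^{d_0},\ldots,x_n^{d_n})=\pm 1$ is circular: that formula is itself proved from properties of the resultant (homogeneity, multiplicativity, behavior under specialization) that presuppose the existence and integrality you are trying to establish. The route actually taken in \cite{CoxLittleShea}, and implicitly available in this paper via the determinants $D_0,\ldots,D_n$ of Proposition~\ref{prop_GCD}, is cleaner: each $D_i$ is an integer polynomial in the $u_{i,\alpha}$, vanishes on $\pi(W)$, and evaluates to $1$ at the monomial system (the coefficient matrix becomes the identity there). Hence $R$ divides each $D_i$ in $\mathbb{C}[u_{i,\alpha}]$, and after descent to $\mathbb{Q}$ and clearing content, the primitive integer generator must divide a polynomial taking the value $1$ at the monomial point, forcing its own value there to be $\pm 1$. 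Replacing the Poisson argument with this determinantal one removes the circularity and completes the proof.
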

The unique irreducible polynomial $\mathrm{Res}\in \mathbb{Z}[u_{i, \alpha}]$   is called the \textit{resultant} of degrees $(d_0,\ldots, d_n)$, and
 $\mathrm{Res}(F_0, \ldots, F_n)$ is understood as the evaluation of $\mathrm{Res}$ at the point $\{u_{i, \alpha}=c_{i, \alpha}\}$ with $c_{i, \alpha}$ given by  $F_i$.

We require a well-known method to construct a formula for the resultant, whose essence is to multiply each equation by appropriate monomials until we arrive at a determinant of an associated square matrix. Suppose we have $F_0, \ldots, F_n\in \mathbb{C}[x_0, \ldots, x_n]$  with total degrees $d_0,\ldots, d_n$. Then set
\[
d=\sum_{i=0}^n(d_i-1)+1.
\]

Now let $S^{(n, d)}$ denote  the set consisting of all monomials $x^{\alpha}=x_0^{a_0}x_1^{a_1}\cdots x_n^{a_n}$ of total degree $d$ and define the $n+1$ disjoint subsets  $S_0, S_1, \ldots, S_n$ of these monomials by
 \begin{equation}\label{Si}
 S_i=\{x^{\alpha}: |\alpha|=d,\,\, x_0^{d_0},\ldots, x_{i-1}^{d_{i-1}}\, \mathrm{does\,\, not \,\, divide}\, x^{\alpha}, \,\, \textrm{but}\,\, x_i^{d_i}\,\,\mathrm{does}\}.
 \end{equation}
 Note that $S_0, S_1, \ldots, S_n$ constitutes a partition of the set $S^{(n, d)}$. Consider the  system of equations
\begin{eqnarray}\label{systm_equatns}
  x^{\alpha}/x_0^{d_0}\cdot F_0 &=& 0\,\, \textrm{for} \,\,\textrm{all} \,\, x^{\alpha}\in S_0\nonumber \\
    & \vdots &   \\
x^{\alpha}/x_n^{d_n}\cdot F_n &=& 0\,\, \textrm{for} \,\,\textrm{all} \,\, x^{\alpha}\in S_n. \nonumber
\end{eqnarray}

Note that the polynomials $x^{\alpha}/x_i^{d_i}\cdot F_i$ ($x^{\alpha}\in S_i$) are homogeneous of same  degree $d$, and then each polynomial on the left side of \eqref{systm_equatns} can be written as a linear combination of monomials of total degree $d$. Suppose that there are $N$ such monomials with  $N=\binom{d+n}{n}$. Thus, regarding the monomials of total degree $d$ as unknowns, we arrive at a system of $N$ linear equations in $N$ unknowns. The determinant of the coefficient matrix of the $N\times N$ system of equations given by \eqref{systm_equatns} is denoted by $D_n$. Then $D_n$ is a polynomial in the coefficients of the $F_i$, and is homogeneous in the coefficients of  $F_i$ of degree equal to the cardinality of $S_i$, for any fixed $i$ between 0 and $n$. Observe that $D_n$ vanishes whenever $F_0= \cdots= F_n=0$ has a nontrivial solution, which implies that $D_n$ vanishes on $\mathbf{V}(\textrm{Res})$  and thus
$$D_n=\textrm{Res}\cdot\, (\textrm{extraneous}\,\, \textrm{factor}),$$
where  the extraneous factor is an integer polynomial in the coefficents of $\bar{F}_0, \ldots, \bar{F}_{n-1}$, where $\bar{F}_i=F_i(x_0,\ldots, x_{n-1},0)$. For this process, if we fix $i$ between 0 and $n-1$ and order the variables so that $x_i$ comes last, then we may produce slightly different sets $S_0, \ldots, S_n$ and a slightly different system of equations \eqref{systm_equatns}. We will let $D_i$ denote the  determinant of this system of equations. (Note that there are many different orderings of the variables for which $x_i$ is last.  We pick one when computing $D_i$.) Note that $D_i$ is  homogeneous in the coefficients of each $F_j$ and particularly, is homogeneous of degree $d_0\cdots d_{i-1}d_{i+1}\cdots d_n$ in the coefficients of $F_i$.

 \begin{prop}[\cite{CoxLittleShea}]\label{prop_GCD}
When    $\mathbf{F}_0, \ldots, \mathbf{F}_n$ are universal polynomials as in \eqref{universalpoly}, the resultant is the greatest common divisor of the   polynomials   $D_0, \ldots, D_n$ in the ring $\mathbb{Z}[u_{i, \alpha}]$, i.e.,
\[
\mathrm{Res}=\pm \mathrm{GCD}(D_0, \ldots, D_n).
\]
\end{prop}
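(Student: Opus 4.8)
My plan is to establish the two divisibilities $\mathrm{Res}\mid \mathrm{GCD}(D_0,\dots,D_n)$ and $\mathrm{GCD}(D_0,\dots,D_n)\mid \mathrm{Res}$ in the unique factorization domain $\mathbb{Z}[u_{i,\alpha}]$; since $\mathrm{Res}$ is irreducible, together they force $\mathrm{GCD}(D_0,\dots,D_n)=\pm\mathrm{Res}$. (Each $D_i$ is nonzero, having positive degree $d_0\cdots d_{i-1}d_{i+1}\cdots d_n$ in the coefficients of $F_i$, so the GCD is meaningful.) For the first divisibility I would run the vanishing argument of the preceding discussion for every index $i$, not just $i=n$: if $F_0=\cdots=F_n=0$ has a nontrivial projective solution $(a_0,\dots,a_n)$, then the vector whose entries are the values at this point of all monomials of degree $d$ is nonzero (one entry is $a_k^d\neq0$ for some $k$) and solves the homogeneous linear system \eqref{systm_equatns} built with $x_i$ placed last; hence that system's coefficient matrix is singular and $D_i=0$. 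Thus $D_i$ vanishes on $\pi(W)=\mathbf{V}(\mathrm{Res})$; since $\mathrm{Res}$ is irreducible the ideal $(\mathrm{Res})$ is prime, hence radical, so by the Nullstellensatz $D_i\in(\mathrm{Res})$ over $\mathbb{C}$, and because $\mathrm{Res}(x_0^{d_0},\dots,x_n^{d_n})=1$ makes $\mathrm{Res}$ primitive over $\mathbb{Z}$, Gauss's Lemma yields $\mathrm{Res}\mid D_i$ in $\mathbb{Z}[u_{i,\alpha}]$ — this is exactly the factorization $D_i=\mathrm{Res}\cdot(\text{extraneous factor})$ recorded above. Hence $\mathrm{Res}\mid\mathrm{GCD}(D_0,\dots,D_n)$.

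For the reverse divisibility, write $D_i=\pm\,\mathrm{Res}\cdot E_i$ with $E_i$ the extraneous factor coming from the $x_i$-last ordering, and write $\mathrm{GCD}(D_0,\dots,D_n)=\mathrm{Res}\cdot G'$ using the first part. From $\mathrm{Res}\cdot G'\mid\mathrm{Res}\cdot E_i$ we get $G'\mid E_i$ for every $i$. The crucial point is that $E_i$ involves \emph{no} coefficient $u_{i,\alpha}$ of $F_i$: the resultant is homogeneous of degree $d_0\cdots d_{i-1}d_{i+1}\cdots d_n$ in the coefficients of $F_i$ — exactly the degree that $D_i$ has in those coefficients — so $E_i=D_i/(\pm\mathrm{Res})$ has degree $0$ in them; equivalently, the structural description recalled before the statement says the extraneous factor is a polynomial only in the coefficients of the restricted forms $F_j|_{x_i=0}$ with $j\neq i$. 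Since $G'$ divides $E_i$ and $E_i$ does not involve the variables $\{u_{i,\alpha}:|\alpha|=d_i\}$, neither does $G'$ (its degree in each such variable is at most the corresponding degree of $E_i$, namely $0$). Letting $i$ range over $0,\dots,n$ shows $G'$ involves no variable at all, so $G'\in\mathbb{Z}$; and since $G'$ divides the nonzero polynomial $E_0$ and the GCD is defined only up to units, $G'=\pm1$ and $\mathrm{GCD}(D_0,\dots,D_n)=\pm\mathrm{Res}$, as claimed.

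The only genuinely delicate step is the assertion that $E_i$ is free of \emph{all} coefficients of $F_i$. Through the degree count this reduces to the standard fact — used but not proved in the excerpt — that $\mathrm{Res}(F_0,\dots,F_n)$ is homogeneous of degree $\prod_{j\neq i}d_j$ in the coefficients of $F_i$; alternatively it follows from the block structure of the Macaulay matrix of \eqref{systm_equatns}, whereby (just as recalled in the excerpt for the case $i=n$) the extraneous factor depends only on the coefficients of the restricted forms $F_j|_{x_i=0}$ with $j\neq i$, none of which involves $F_i$. Granting this one structural point, the remainder is the elementary UFD and degree bookkeeping above, so I expect the final write-up to be short once that point is invoked from \cite{CoxLittleShea}.
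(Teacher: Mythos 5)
The paper does not prove this proposition; it is quoted verbatim from \cite{CoxLittleShea}, so there is no internal argument to compare against. Your route is essentially the standard one from that reference: $\mathrm{Res}\mid D_i$ for every $i$ via vanishing on $\mathbf{V}(\mathrm{Res})$, primality of $(\mathrm{Res})$, and Gauss's Lemma; then the observation that the extraneous factor $E_i=D_i/(\pm\mathrm{Res})$ is free of the coefficients of $F_i$ (either by the degree count $\deg_{F_i}D_i=\deg_{F_i}\mathrm{Res}=\prod_{j\neq i}d_j$ or by the paper's remark that the extraneous factor of $D_n$ depends only on the $\bar F_j$ with $j<n$), so that the cofactor $G'$ of $\mathrm{Res}$ in the GCD involves no variable at all. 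All of that is sound.

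There is one genuine gap at the very last step. Knowing $G'\in\mathbb{Z}$ and $G'\mid E_0$ does \emph{not} force $G'=\pm1$: a nonzero integer divides a polynomial in $\mathbb{Z}[u_{i,\alpha}]$ exactly when it divides its content (compare $\gcd(2x,2y)=2$ in $\mathbb{Z}[x,y]$), and ``the GCD is defined up to units'' only fixes the sign, not the magnitude, of $G'$. To pin the constant down you should specialize to $F_j=x_j^{d_j}$ for all $j$: under this specialization each row of the system \eqref{systm_equatns} is $x^{\alpha}/x_j^{d_j}\cdot x_j^{d_j}=x^{\alpha}$ for $x^{\alpha}\in S_j$, so since $S_0,\ldots,S_n$ partition $S^{(n,d)}$ the coefficient matrix is a permutation matrix and every $D_i$ evaluates to $\pm1$; hence $\mathrm{GCD}(D_0,\ldots,D_n)$ evaluates to $\pm1$ there, while $\mathrm{Res}(x_0^{d_0},\ldots,x_n^{d_n})=1$ by property (2) of the resultant, forcing $G'=\pm1$. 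With that one-line repair (and granting, as you note, the homogeneity of $\mathrm{Res}$ of degree $\prod_{j\neq i}d_j$ in the coefficients of $F_i$, which must be imported from \cite{CoxLittleShea}), the argument is complete.
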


A monomial $x^{\alpha}$ of total degree $d$ is \textit{reduced}  if $x_i^{d_i}$ divides $x^{\alpha}$ for exactly one $i$. Let $D_n'$ be the determinant of the submatrix of the coefficient matrix of \eqref{systm_equatns} obtained by deleting all rows and columns corresponding to   reduced monomials.

\begin{prop}[\cite{CoxLittleShea}]\label{prop_ratiodet}
When    $\mathbf{F}_0, \ldots, \mathbf{F}_n$ are universal polynomials, the resultant is given by
\[
\mathrm{Res}=\pm \frac{D_n}{D_n'}.
\]
 Further, if $\mathcal{K}$ is any field and  $F_0, \ldots, F_n\in \mathcal{K}[x_0, \ldots, x_n]$, then the above formula for $\mathrm{ Res}$ holds whenever
 $D_n'\neq0$.
\end{prop}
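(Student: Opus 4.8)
The plan is to follow Macaulay's classical argument (as in \cite{CoxLittleShea}) and to identify the ``extraneous factor'' of the preceding discussion with $D_n'$. Recall that for the universal polynomials $\mathbf{F}_0,\dots,\mathbf{F}_n$ one already has $D_n=\mathrm{Res}\cdot E$ in $\mathbb{Z}[u_{i,\alpha}]$, where $E$ involves only the coefficients of $\bar{F}_0,\dots,\bar{F}_{n-1}$; since the asserted identity $\mathrm{Res}=\pm D_n/D_n'$ is equivalent to $E=\pm D_n'$, this is the thing to prove, and then the statement over an arbitrary field $\mathcal{K}$ will follow by specialization.

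First I would record the elementary bookkeeping. Each row of the $N\times N$ matrix defining $D_n$, and of the submatrix defining $D_n'$, is the coefficient vector of a single $(x^\alpha/x_i^{d_i})F_i$, so it contributes to the determinant linearly in the coefficients of exactly one $F_i$; thus $D_n$ is homogeneous of degree $|S_i|$ in the coefficients of $F_i$ (as already noted), while $D_n'$ is homogeneous of degree $|S_i\setminus\mathcal{R}|$ in those coefficients, where $\mathcal{R}$ is the set of reduced monomials of degree $d$. A direct count gives that the number of reduced monomials divisible by $x_i^{d_i}$ alone equals $\prod_{j\neq i}d_j$: such a monomial is $x_i^{d_i}x^{\gamma}$, where the exponents $\gamma_j$ for $j\neq i$ may be chosen freely in $\{0,1,\dots,d_j-1\}$ and then $\gamma_i=d-d_i-\sum_{j\neq i}\gamma_j\geq 0$ is forced. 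Hence $|S_i|-|S_i\setminus\mathcal{R}|=\prod_{j\neq i}d_j=\deg_{F_i}\mathrm{Res}$, so $E$ and $D_n'$ have the same degree $|S_i|-\prod_{j\neq i}d_j$ in the coefficients of each $F_i$. Moreover $S_n\subseteq\mathcal{R}$ (a degree-$d$ monomial in $S_n$ is divisible by $x_n^{d_n}$ and by no earlier $x_j^{d_j}$, hence by exactly one $x_j^{d_j}$), so every non-reduced row belongs to some $S_i$ with $i<n$, and consequently $D_n'$ does not involve the coefficients of $F_n$ at all.

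The core of the argument — and the step I expect to be the main obstacle — is the divisibility $D_n'\mid D_n$ in $\mathbb{Z}[u_{i,\alpha}]$, which requires a careful analysis of how the multipliers $x^\alpha/x_i^{d_i}$ act on the monomial basis in degree $d$; this is exactly the combinatorial heart of Macaulay's proof in \cite{CoxLittleShea}. Granting it, write $D_n=D_n'\cdot Q$; comparing with $D_n=\mathrm{Res}\cdot E$ gives $\mathrm{Res}\cdot E=D_n'\cdot Q$. Since $\mathrm{Res}$ is irreducible while $D_n'$ does not involve the coefficients of $F_n$ (whereas $\deg_{F_n}\mathrm{Res}=\prod_{j<n}d_j\geq 1$), $\mathrm{Res}$ cannot divide $D_n'$, so $\mathrm{Res}\mid Q$; writing $Q=c\cdot\mathrm{Res}$, the degree count above forces $c$ to have degree $0$ in the coefficients of every $F_i$, hence $c$ is a constant. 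Finally, the specialization $(F_0,\dots,F_n)\mapsto(x_0^{d_0},\dots,x_n^{d_n})$ turns the matrix in \eqref{systm_equatns} into the identity, so $D_n=\pm1$, $D_n'=\pm1$, and, by the normalization $\mathrm{Res}(x_0^{d_0},\dots,x_n^{d_n})=1$, we get $\mathrm{Res}=1$; this pins $c=\pm1$ and yields the identity $D_n=\pm D_n'\cdot\mathrm{Res}$ of integer polynomials in the $u_{i,\alpha}$. That identity survives every specialization $u_{i,\alpha}\mapsto c_{i,\alpha}$ into a field $\mathcal{K}$, so whenever $D_n'\neq 0$ under the specialization one may divide to conclude $\mathrm{Res}(F_0,\dots,F_n)=\pm D_n/D_n'$.
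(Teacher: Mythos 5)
A preliminary remark on the comparison itself: the paper offers no proof of this proposition --- it is quoted from Cox--Little--O'Shea as a black box --- so there is no in-paper argument to measure yours against, and I can only judge the reconstruction on its own merits. On those merits, all of your bookkeeping is correct and cleanly done: the count $|S_i\cap\mathcal{R}|=\prod_{j\neq i}d_j$ of reduced monomials lying in $S_i$, the resulting equality of the multidegrees of the extraneous factor $E$ and of $D_n'$, the observation that $S_n\subseteq\mathcal{R}$ and hence that $D_n'$ is free of the coefficients of $F_n$, the irreducibility-plus-degree argument pinning the remaining factor to an integer, the normalization at $(x_0^{d_0},\ldots,x_n^{d_n})$ forcing that integer to be $\pm1$, and the passage to an arbitrary field by specializing the integer polynomial identity $D_n=\pm\,\mathrm{Res}\cdot D_n'$.

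The difficulty is that the one step you ``grant'' --- the divisibility $D_n'\mid D_n$ in $\mathbb{Z}[u_{i,\alpha}]$ --- is not an auxiliary technicality but the entire substance of Macaulay's theorem. A priori $D_n'$ is just one particular minor of the coefficient matrix of \eqref{systm_equatns}, and nothing in the definitions makes a minor divide the full determinant; establishing it requires the detailed analysis of how the multipliers $x^{\alpha}/x_i^{d_i}$ interact with the reduced and non-reduced monomials (in effect, exhibiting a block structure after suitable row operations, or Macaulay's inductive comparison of $E$ with $D_n'$). Every other input you invoke ($D_n=\mathrm{Res}\cdot E$ with $E$ depending only on $\bar F_0,\ldots,\bar F_{n-1}$; $\deg_{F_i}\mathrm{Res}=\prod_{j\neq i}d_j$) is already available from the surrounding discussion or from standard resultant theory, so the step you omit is precisely the one that separates this proposition from statements already on the table. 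What you have is a correct and well-organized reduction of the proposition to that lemma, together with a correct derivation of the field-specialization clause from it --- but not yet a proof of the proposition.
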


\subsection{ Hypergraphs and tensors}

A \textit{hypergraph}  $\mathcal{H}$ is a pair $(V,E)$, where $V=V(\mathcal{H})$ is a finite set and $E\subseteq \mathcal{P}(V)$ with $\mathcal{P}(V)$ being the power set of $V$.
 When every edge $e\in E(\mathcal{H})$ contains precisely $r$ vertices, $\mathcal{H}$ is said to be \emph{$r$-uniform}, or simply an $r$-graph.   For a vertex $v\in V$, let   $E_v(\mathcal{H})$ (or simply $E_v$) denote the set of edges containing $v$. The cardinality $|E_v|$ is the \emph{degree} of $v$, denoted by $\deg(v)$.
  If any two edges in $\mathcal{H}$ share at most one vertex, then $\mathcal{H}$ is said to be a \emph{linear hypergraph}. For any subset $E'\subseteq E$, the hypergraph with edge set $E'$ and vertex set $U=\{v: \exists \, e\in E' \,\,\mathrm{such\,\, that}\,\, v\in e\}$ is called a \textit{subgraph} of $\mathcal{H}$ (induced by $E'$),  and denoted by $\mathcal{H}[E']=(U, E')$.
   For any subset $V'\subseteq V$, the hypergraph   with vertex set $V'$ and edge set $F=\{e: e\in E  \,\,\mathrm{and}\,\, e\subseteq V'\}$ is called a \textit{subgraph} of $\mathcal{H}$ (induced by $V'$), and denoted by $\mathcal{H}[V']=(V', F)$.  We may use $\mathcal{H}-V'$  and $\mathcal{H}-E'$ to denote the subgraphs of $\mathcal{H}$ induced by the vertex set $V\setminus V'$ and by the edge set $E\setminus E'$, respectively. When  $V'=\{v\}$ or $E'=\{e\}$, $\mathcal{H}-V'$  and $\mathcal{H}-E'$ are simply written as $\mathcal{H}-v$  and $\mathcal{H}-e$, respectively.
   Henceforth,   we keep  the term ``subgraph"   instead of   ``subhypergraph" for ease of reading. This convention will also apply to other basic terminology involving hypergraphs, etc.
We write $\mathcal{H}' \sqsubseteq\mathcal{H}$ to mean $\mathcal{H}'$ is a connected subgraph of $\mathcal{H}$.
   For a given subgraph $\mathcal{H}'$ of $\mathcal{H}$, a   \textit{boundary edge} of $\mathcal{H}'$ is an edge of $\mathcal{H}$ which contains vertices both from $V(\mathcal{H}')$ and from $V(\mathcal{H}) \setminus V(\mathcal{H}')$, and the \textit{boundary} of $\mathcal{H}'$, denoted $\partial(\mathcal{H}')$, is defined to be  the set of boundary edges of $\mathcal{H}'$.  For more see \cite{Berge-hypergraph, Bretto, zykov}.

 In  a hypergraph $\mathcal{H}$,  two vertices $u$ and $v$ are {\em adjacent} if there is an edge $e$ of $\mathcal{H}$ such
that $\{u,v\}\subseteq e$, and in this case  $e$ is   {\em incident} to  $v$ (and $u$), and $(v, e)$ is called an \textit{incidence} of $\mathcal{H}$ (determined by $e$).
A {\em walk}  in $\mathcal{H}$ is a finite sequence composed
of elements (i.e. vertices or edges) such that vertices and edges alternate and any
two neighbouring elements are incident. The beginning and end of a walk
can be, independently of each other, either a vertex or an edge.
The walk is {\em  closed} if its beginning and end coincide.
A \textit{path} $P$ (of length $s$) in $\mathcal{H}$ from a vertex $x$ to a vertex $y$ is a walk $x e_1v_1e_2\cdots v_{s-1} e_{s}y$ in which  $x, v_1,v_2, \ldots v_{s-1}$ are distinct vertices and $e_1, e_2, \ldots,  e_{s}$ are distinct edges of $\mathcal{H}$. If this path $P$ is closed and $s>1$, then it is called a \textit{cycle of length} $s$. A path  in $\mathcal{H}$ from an element   to another  is defined similarly.
A
hypergraph $\mathcal{H}$ is called {\em connected} if for any vertices $u$, $v$, there is a path connecting
$u$ and $v$, with the convention that a hypergraph  consisting of a single vertex is connected. The \textit{distance} $\mathrm{dist}(x, y)$ between two vertices $x$ and $y$ is the minimum length of a path between $x$ and $y$.
A   \textit{hypertree} is a connected hypergraph containing no cycles, and is called an $r$-tree, if, in addition, it is $r$-uniform. Note that a hypertree must be a linear hypergraph.

A basic result on the order and size of a hypertree needed later is presented as follows.

\begin{lemma}[\cite{Berge-hypergraph}]\label{hypertree-order-size}
Suppose $\mathcal{H}$ is an $r$-tree, where $n$  and $m$  are the number of vertices and edges, respectively. Then
\[
m=\frac{n-1}{r-1}.
\]
 \end{lemma}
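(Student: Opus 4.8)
The plan is to translate the claim into a statement about an ordinary graph. Let $B$ be the \emph{bipartite incidence graph} of $\mathcal{H}$: its vertex set is $V(\mathcal{H})\sqcup E(\mathcal{H})$, and a vertex-node $v$ is joined to an edge-node $e$ exactly when $v\in e$. First I would check that $B$ is connected. A path in $\mathcal{H}$ between two vertices $u$ and $v$ is, by definition, an alternating sequence of vertices and edges with consecutive terms incident, and that is literally a $u$--$v$ walk in $B$; since in addition every edge-node of $B$ is adjacent to some vertex-node, connectivity of $\mathcal{H}$ forces connectivity of $B$.

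The crucial step is that $B$ is acyclic. Since $B$ is bipartite, any cycle of $B$ can be written $v_1e_1v_2e_2\cdots v_ke_kv_1$ with $k\ge 2$, with the $v_i$ pairwise distinct, the $e_i$ pairwise distinct, and $v_j,v_{j+1}\in e_j$ for every $j$ (reading $v_{k+1}=v_1$). Interpreting the very same sequence inside $\mathcal{H}$ produces a cycle of length $k$ in the precise sense of the excerpt: the vertices $v_1,\dots,v_k$ are distinct, the edges $e_1,\dots,e_k$ are distinct, the walk is closed, and $k>1$. This contradicts the hypothesis that $\mathcal{H}$ is a hypertree. (In the shortest case $k=2$ one also gets $e_1\cap e_2\supseteq\{v_1,v_2\}$, which is exactly the configuration ruled out by a hypertree being linear; no input beyond acyclicity is actually needed.) Hence $B$ is a tree.

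It remains to count the edges of $B$ in two ways. On one hand, a tree on $|V(B)|=n+m$ vertices has exactly $n+m-1$ edges. On the other hand, the edges of $B$ are precisely the incidences of $\mathcal{H}$, and $r$-uniformity gives $\sum_{e\in E(\mathcal{H})}|e|=rm$ of them. Equating the two counts yields $rm=n+m-1$, that is $(r-1)m=n-1$, and therefore $m=\dfrac{n-1}{r-1}$.

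The one point needing care is the cycle-freeness of $B$: a genuine (simple) cycle in $B$ must be matched with the exact notion of a hypergraph cycle used in the paper, the degenerate short cycles being exactly what linearity of a hypertree forbids; the rest is bookkeeping. A self-contained alternative is induction on $m$: when $m\ge 1$ a hypertree has a pendant edge $e$, i.e.\ one containing at least $r-1$ vertices of degree $1$; deleting $e$ together with those $r-1$ vertices leaves an $r$-tree with $m-1$ edges and $n-(r-1)$ vertices, to which the inductive hypothesis applies, while the base case $m=0$ (a single vertex, so $n=1$) is immediate.
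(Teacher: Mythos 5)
Your proof is correct; note, though, that the paper offers no proof of this lemma at all --- it is quoted verbatim from Berge's book --- so there is nothing internal to compare against. Your incidence-bipartite-graph argument is the standard one and it is carried out cleanly: connectivity of $B$ follows because a path in $\mathcal{H}$ is literally an alternating walk in $B$; a simple cycle of $B$, necessarily of even length $2k$ with $k\ge 2$, reads off exactly as a closed path $v_1e_1v_2\cdots v_ke_kv_1$ with distinct vertices, distinct edges and length $k>1$, which is precisely the paper's notion of a cycle of length $k$ in $\mathcal{H}$ (and you correctly observe that the $k=2$ case, two edges meeting in two vertices, is already excluded by acyclicity alone, so linearity need not be invoked separately); and the double count $rm=n+m-1$ of the incidences of a tree on $n+m$ nodes gives the formula. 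The only mild caveats are the degenerate case $m=0$, $n=1$, where the formula still holds trivially, and, in your alternative inductive sketch, the existence of a pendant edge, which itself needs a short longest-path argument; but the main argument does not rely on that sketch.
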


For positive integers $r$ and $n$, a
{\em tensor} $\mathcal{A}=(a_{i_1i_2\cdots i_r})$ of order $r$ and dimension $n$
refers to a multidimensional array   with entries
$a_{i_1i_2\cdots i_r}$ such that $a_{i_1i_2\cdots i_r}\in\mathbb{C}$ for
all $i_1$, $i_2$, $\ldots$, $i_r\in[n]$, where $[n]=\{1,2,\ldots,n\}$.

Let $\mathcal{A}$ be an order $r$ and dimension $n$ tensor. If there exists  $\lambda\in\mathbb{C}$
and a nonzero vector $x\in\mathbb{C}^{n}$ such that
\begin{equation*}
\mathcal{A}x=\lambda x^{[r-1]},
\end{equation*}
where $\mathcal{A}x$ is an $n$-dimensional vector with $ \sum_{i_2,\ldots,i_r=1}^na_{ii_2\cdots i_r}x_{i_2}\cdots x_{i_r}$ as its $i$-th entry,
and  $x^{[r-1]}$ is a vector with $i$-th entry $x^{r-1}_i$, then $\lambda$ is called an {\em eigenvalue} of $\mathcal{A}$, $x$ is called
an {\em eigenvector} of $\mathcal{A}$ corresponding to the eigenvalue $\lambda$ (see \cite{qi05, QiLuo-2017}). Let $\mathcal{I}$ denote the identity tensor with the same order and dimension as $\mathcal{A}$, i.e. $\mathcal{I}_{i_1i_2\cdots i_r}=1$ if $i_1=i_2=\cdots =i_r$ and is 0 otherwise. Then $(\lambda \mathcal{I}-\mathcal{A})x=0$ is a system of equations as follows
\begin{equation}\label{systemequ}
F_i:=\lambda x_i^{r-1}- \sum_{i_2,\ldots,i_r=1}^na_{ii_2\cdots i_r}x_{i_2}\cdots x_{i_r}=0,\,\, \mathrm{for}\,\, i=1,\ldots, n.
\end{equation}
 The resultant $\mathrm{Res}(F_1,\ldots, F_n)$ is called the  \textit{characteristic polynomial of tensor} $\mathcal{A}$ and is denoted by $\phi_{\mathcal{A}}(\lambda)$. A fundamental result by Qi~\cite{qi05} states that a scalar $\lambda$ is an eigenvalue of  $\mathcal{A}$ if and only if it is a root of the characteristic polynomial of  $\mathcal{A}$.

Let $\mathcal{H}=(V, E)$ be an $r$-graph wtih $V=[n]$. The adjacency
tensor of $\mathcal{H}$ is defined as the order $r$ and dimension $n$ tensor
$\mathcal{A}(\mathcal{H})=(a_{i_1i_2\cdots i_r})$, whose $(i_1i_2\cdots i_r)$-entry is
\[
a_{i_1i_2\cdots i_r}=\begin{cases}
\frac{1}{(r-1)!}, & \text{if}~\{i_1,i_2,\ldots,i_r\}\in E,\\
0, & \text{otherwise}.
\end{cases}
\]
 The eigenvalues and  characteristic polynomial of $\mathcal{A}(\mathcal{H})$ are called the eigenvalues and  characteristic polynomial of  $\mathcal{H}$, respectively.

We include some notation for convenience. For any function (or vector) $x=(x_i)_{i=1}^n$ on $V$ and $e=(i_1,\ldots, i_m)\in V^m$ ($m$ is a positive integer), $\alpha=(a_1, \ldots, a_{n})\in \mathbb{N}^{n}$,  we denote $x_e=x_{i_1}x_{i_2}\cdots x_{i_m}$, $x|_e=(x_{i_1},x_{i_2},\cdots, x_{i_m})$ and  $x^{\alpha}=x_1^{a_1}x_2^{a_2}\cdots x_n^{a_n}$. Then $(\lambda, x)$ is an eigenpair of $\mathcal{H}$ if and only it satisfies

\begin{equation*}
\lambda x_i^{r-1}=\sum_{i_2,\ldots,i_r\in V}a_{ii_2\cdots i_r}x_{i_2}\cdots x_{i_n},\,\, \mathrm{for}\,\, i=1,\ldots, n,
\end{equation*}
or equivalently,
\begin{equation*}
\lambda x_i^{r-1}=\sum_{e\in E_{i}}x_{e\setminus\{i\}},\,\, \mathrm{for}\,\, i=1,\ldots, n.
\end{equation*}

    \textit{A matching} of a hypergraph $\mathcal{H}$ is a set of mutually
disjoint (or independent) edges in $E$. A \textit{$k$-matching} is  a matching
consisting of $k$ edges. We denote by $m_{\mathcal{H}}(k)$ the number of
$k$-matchings in $\mathcal{H}$.  The  \textit{matching number} $\nu(\mathcal{H})$ of $\mathcal{H}$ is the maximum cardinality of a matching.

Su et al.~\cite{SuKLS} defined the \textit{matching polynomial} for a general $r$-uniform hypergraph $\mathcal{H}$ of order $n$  as
\begin{equation}\label{e-matchingpoly}
\varphi_{\mathcal{H}}(x)=\sum\limits_{k\geq0}(-1)^{k}m_{\mathcal{H}}(k)x^{n-kr},
\end{equation}
and introduced this polynomial to investigate a perturbation on the spectral radius of hypertrees. Some fundamental properties of this polynomial were obtained  in \cite{SuKLS} and the next result, which is applied later is listed as follows.

 \begin{theorem}[\cite{SuKLS}]\label{thm_matchingpoly}
The matching polynomial for the disjoint union of the hypergraphs  $\mathcal{H}_1, \mathcal{H}_2, \ldots, \mathcal{H}_s$ is equal to the product $\varphi_{\mathcal{H}_1}(x)\varphi_{\mathcal{H}_2}(x) \cdots \varphi_{\mathcal{H}_s}(x)$.  \end{theorem}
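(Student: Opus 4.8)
The plan is to reduce the identity to a single combinatorial fact about matchings together with a routine polynomial expansion. The essential observation is that since the $\mathcal{H}_i$ are pairwise vertex-disjoint, any two edges lying in different components are automatically independent; hence a set of edges of the disjoint union $\mathcal{H}=\mathcal{H}_1\cup\cdots\cup\mathcal{H}_s$ is a matching if and only if its intersection with each $E(\mathcal{H}_i)$ is a matching of $\mathcal{H}_i$. This yields a bijection between the $k$-matchings of $\mathcal{H}$ and the tuples of matchings, one chosen from each $\mathcal{H}_i$, whose sizes sum to $k$. Consequently,
\[
m_{\mathcal{H}}(k)=\sum_{k_1+\cdots+k_s=k}\ \prod_{i=1}^s m_{\mathcal{H}_i}(k_i).
\]

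Next I would carry out the algebraic step. Writing $n_i$ for the order of $\mathcal{H}_i$, so that $\mathcal{H}$ has order $n=n_1+\cdots+n_s$, I expand the product of the defining series
\[
\prod_{i=1}^s\varphi_{\mathcal{H}_i}(x)=\prod_{i=1}^s\Bigl(\sum_{k_i\ge0}(-1)^{k_i}m_{\mathcal{H}_i}(k_i)\,x^{n_i-k_i r}\Bigr),
\]
and collect the terms according to the total matching size $k=k_1+\cdots+k_s$. Because all of the $\mathcal{H}_i$ share the common uniformity $r$, the exponents aggregate correctly, namely $\sum_i (n_i-k_i r)=n-kr$, while the signs combine as $\prod_i(-1)^{k_i}=(-1)^k$. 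Thus the coefficient of $x^{n-kr}$ in the expanded product is exactly $(-1)^k\sum_{k_1+\cdots+k_s=k}\prod_i m_{\mathcal{H}_i}(k_i)$, which by the convolution identity above equals $(-1)^k m_{\mathcal{H}}(k)$. Summing over $k$ recovers $\varphi_{\mathcal{H}}(x)=\sum_{k\ge0}(-1)^k m_{\mathcal{H}}(k)x^{n-kr}$, establishing the claim.

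The argument presents no genuine obstacle; the only point demanding care is the exponent bookkeeping, which rests on the common uniformity $r$ so that the individual ``$k_i r$'' contributions collapse into a single ``$kr$''. If preferred, one can instead induct on $s$, reducing to the case $s=2$, where the single Cauchy-product computation $\varphi_{\mathcal{H}_1\cup\mathcal{H}_2}(x)=\varphi_{\mathcal{H}_1}(x)\,\varphi_{\mathcal{H}_2}(x)$ follows from the same disjointness observation; the general statement then follows by regarding $\mathcal{H}_1\cup\cdots\cup\mathcal{H}_{s-1}$ as a single hypergraph and applying the inductive hypothesis. In either formulation, the combinatorial bijection for matchings under disjoint union is the actual content of the proof, and the polynomial manipulation merely records it.
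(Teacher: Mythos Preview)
Your proof is correct. The paper does not prove this statement itself; it merely quotes it as a known result from \cite{SuKLS}, so there is no ``paper's own proof'' to compare against. Your argument---the bijection between $k$-matchings of the disjoint union and tuples of matchings in the components with sizes summing to $k$, followed by the Cauchy-product expansion---is the standard one and is exactly what one would expect in the cited source.
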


When $r=2$, $\varphi_{\mathcal{H}}(x)$ is the matching polynomial of a graph. Thus this definition  may be viewed as a natural generalization of a matching polynomial to hypergraphs.

\subsection{A chip-firing game on graphs}

Let $G=(V, E)$ be a connected undirected simple graph. A \textit{configuration} on $G$ is any nonnegative integer-valued  function $s:V\rightarrow  \mathbb{N}$, recording the number of chips located at each vertex of $G$.
  A   vertex $v$ is said to be {\em  stable} for $s$ if  $s(v)<\deg(v)$ and \textit{unstable} otherwise, and  a configuration $s$ is   {\em stable} if every vertex of $G$ is stable for $s$.
The \textit{chip-firing process}  on $G$ begins with an initial configuration. At each step,  an unstable vertex $v$ is selected to {\em fire}, that is, a certain number of chips are selected and move from $v$ to its adjacent vertices,  one chip
distributed along each edge incident to $v$. If, at any stage, a stable configuration is reached, the process stops.  In the chip-firing process, there may be multiple vertices ready to fire, however the order in which vertices are fired is not important, more precisely, the final configuration reached  is invariant under the order of firings, which is a well-known confluence property of such chip-firing processes.

\begin{theorem}[\cite{BjoLovaShor}]\label{confluencenoroot}
Given a connected graph and an initial configuration $s$, either every
chip-firing process starting from $s$ can be continued infinitely, or every
chip-firing process from $s$ terminates after the
same number of moves with the same final configuration.
\end{theorem}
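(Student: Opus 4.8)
The plan is to prove this by the standard ``abelian''/diamond-type argument for chip-firing, isolating a domination lemma for firing vectors as the crux. By a \emph{legal firing sequence} from $s$ we mean a chip-firing process in the sense described above: a finite sequence of firings in which the vertex fired at each step is unstable in the configuration present at that step. For such a sequence $\sigma$, write $x_\sigma\in\mathbb{N}^{V}$ for its \emph{firing vector}, where $x_\sigma(v)$ records how many times $v$ is fired along $\sigma$. The first, routine, observation is that the configuration reached after performing $\sigma$ depends only on $x_\sigma$, not on the order of firings: firing $v$ removes $\deg(v)$ chips from $v$ and deposits one on each neighbour, so the net change effected by $\sigma$ equals $-Lx_\sigma$, where $L=D-A$ is the graph Laplacian; hence the configuration after $\sigma$ is $s-Lx_\sigma$. (This is essentially the statement that distinct firings at distinct vertices commute.)

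The key step is the following domination lemma: \emph{if $\sigma$ is a legal firing sequence from $s$ ending at a stable configuration $s^{*}=s-Lx_\sigma$, and $\tau$ is any legal firing sequence from $s$, then $x_\tau\le x_\sigma$ coordinatewise.} To prove it, suppose not, and consider the first moment along $\tau$ at which some vertex $w$ is about to be fired for the $(x_\sigma(w)+1)$-st time; let $x'$ be the firing vector of the prefix of $\tau$ strictly before that firing, so $x'\le x_\sigma$ and $x'(w)=x_\sigma(w)$. The configuration present at that moment is $c=s-Lx'$, and $c-s^{*}=L(x_\sigma-x')$. Evaluating the $w$-coordinate, and using that $x_\sigma-x'\ge 0$ with $(x_\sigma-x')(w)=0$, gives $(c-s^{*})(w)=-\sum_{u\sim w}(x_\sigma-x')(u)\le 0$, so $c(w)\le s^{*}(w)<\deg(w)$ because $s^{*}$ is stable. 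Thus $w$ is stable in $c$, contradicting the fact that $w$ is legally fired from $c$.

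The theorem follows quickly. A chip-firing process can be extended by one more step precisely while its current configuration is unstable, so a process terminates exactly when it reaches a stable configuration. If some process from $s$ terminates, say via $\sigma$ of length $k=\sum_{v}x_\sigma(v)$ at the stable configuration $s^{*}$, then by the lemma every legal sequence $\tau$ satisfies $\sum_{v}x_\tau(v)\le k$, so no process can be continued indefinitely and hence every process from $s$ terminates. Moreover, if $\tau$ also terminates (at a stable configuration), applying the lemma in both directions yields $x_\tau=x_\sigma$, so $\tau$ has length $k$ and ends at $s-Lx_\tau=s-Lx_\sigma=s^{*}$. Therefore all processes from $s$ terminate after the same number of moves at the same final configuration. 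The only remaining possibility is that no process from $s$ terminates, i.e.\ every process continues infinitely, which is exactly the other alternative in the dichotomy. The degenerate cases ($s$ already stable, or $G$ a single vertex) are covered trivially with $k=0$.

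The main obstacle is the domination lemma: the delicate points are selecting the right ``first offending firing'' and then exploiting the sign pattern of the off-diagonal Laplacian entries together with the stability of $s^{*}$. Once this is in hand, the order-independence of $s-Lx$ and the deduction of termination and uniqueness are pure bookkeeping.
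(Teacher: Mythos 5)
Your proof is correct: the domination lemma (every legal sequence's firing vector is coordinatewise dominated by that of any terminating sequence) is established soundly via the ``first offending firing'' and the sign structure of the Laplacian, and the dichotomy follows as you describe. The paper states this theorem as a quoted result of Bj\"orner--Lov\'asz--Shor without giving a proof, and your argument is essentially the standard abelian-property proof from that literature, so there is nothing in the paper to diverge from.
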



Now a natural modification of this   process is considered by introducing a root vertex.
Let $G=(V, E)$ be a connected graph  and let $q\in V$ be a distinguished vertex called the \textit{root}.
A \textit{configuration} on $G$ with root $q$ is any nonnegative  integer-valued  function on $V\setminus\{q\}$. All terminologies apply to $G$ with a  root, such as stable vertex and stable configuration.  In this case the chip-firing process on $G$ with a root $q$ starts with an initial  configuration $s$ and at each step a non-root vertex that is unstable is selected and fires. If, at any step, a stable configuration is reached, then (and only then) the root vertex is fired.
A sequence of firing  is \textit{legal} if and only if each occurrence of a non-root vertex $v$ follows a configuration $s$ with  $s(v)\geq \deg(v)$ and each occurrence of the root follows a stable configuration.
A configuration  $s$   on $G$ is called \textit{recurrent}  if there is a legal sequence of steps for $s$ which reaches the same  configuration, and is   \textit{critical} if it  is both stable and recurrent.

 \begin{theorem}[\cite{Biggs-criticalgroup, sandpilegroupdualgraph, chip-firing}]\label{confluenceroot}
Let $G$ be a connected graph with a root.  Then any configuration on $G$ will reach a unique critical configuration after a legal chip-firing process.
\end{theorem}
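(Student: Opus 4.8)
The plan is to deduce the statement from the \emph{abelian property} of the rooted chip-firing game together with a finiteness argument for the induced self-map of stable configurations. First I would record the rooted analogue of Theorem~\ref{confluencenoroot}: starting from any configuration $s$ on $V\setminus\{q\}$, firing unstable non-root vertices in any legal order terminates at one and the same stable configuration, which I denote $\mathrm{stab}(s)$. Termination is where connectedness enters — every vertex has a path to the root $q$, and during this phase $q$ is never fired and so behaves as a sink, so chips continually drain out through $q$; a standard monovariant argument (bounding the number of firings of a vertex in terms of its distance to $q$, or adapting the proof of Theorem~\ref{confluencenoroot}) shows the process is finite. Confluence of the limit then follows from the local ``diamond'' lemma — firing two distinct unstable vertices commutes — together with Newman's lemma, just as in the unrooted case.

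Next I would study the map $\sigma$ on the finite set $\mathcal S$ of stable configurations defined by $\sigma(c)=\mathrm{stab}(c+\beta)$, where $\beta$ is the configuration produced by one firing of the root, so $\beta(v)$ counts the edges joining $v$ to $q$. Since the root is fired only from a stable configuration, any legal chip-firing process issued from a stable configuration $c$ consists of root-firings interleaved with stabilizations of the non-root vertices; by the abelian property, if such a process returns to $c$ then, $c$ being stable, every stabilization it performs (including the last) is complete, so the process realizes $c\mapsto\sigma(c)\mapsto\cdots\mapsto\sigma^{k}(c)=c$ for some $k\geq1$ (here $k\geq1$ because the first move of a process starting from a stable configuration must be a root-firing). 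Because $\sigma$ is a self-map of the finite set $\mathcal S$, for every initial $s$ the sequence $\bigl(\sigma^{n}(\mathrm{stab}(s))\bigr)_{n\geq0}$ is eventually periodic: $\sigma^{N+p}(\mathrm{stab}(s))=\sigma^{N}(\mathrm{stab}(s))$ for some $N\geq0$, $p\geq1$. Setting $c:=\sigma^{N}(\mathrm{stab}(s))$ gives $\sigma^{p}(c)=c$, so iterating ``fire the root, then stabilize'' $p$ times is a legal loop at $c$; hence $c$ is recurrent and, being stable, critical. Thus from any $s$ one can legally reach a critical configuration.

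The crux is \emph{uniqueness}, which I would obtain by showing that $\sigma$ fixes every critical configuration — equivalently, that $\sigma$ has no cycle of length $>1$ among recurrent configurations. Granting this, $\sigma^{n}(\mathrm{stab}(s))$ is eventually constant, equal to a canonically determined critical configuration $c$, and by the abelian property no legal process from $s$ can terminate at any other critical configuration, giving uniqueness. To prove $\sigma(c)=c$ for a recurrent $c$, I would invoke Dhar's burning criterion: when $c$ is recurrent, stabilizing $c+\beta$ fires every non-root vertex exactly once, and since $\deg(v)=\beta(v)+\#\{\text{edges from }v\text{ to }V\setminus\{q\}\}$, the net change at each vertex is zero, whence $\mathrm{stab}(c+\beta)=c$. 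Equivalently, in the sandpile group $K(G)=\mathbb Z^{V\setminus\{q\}}/\widetilde L\,\mathbb Z^{V\setminus\{q\}}$ — finite because the reduced Laplacian $\widetilde L$ is nonsingular for connected $G$ — the critical configurations form a transversal on which ``add a vector and stabilize'' realizes the group action, and $[\beta]=[\widetilde L\mathbf 1]=0$, so adding $\beta$ acts trivially. The main obstacle I anticipate is precisely this last step: matching the paper's definition of a recurrent configuration (existence of a legal loop) with the operational/burning description and verifying that $\sigma$ acts as the identity on recurrent configurations; the remaining ingredients — termination, confluence, and eventual periodicity — are the routine abelian-sandpile machinery.
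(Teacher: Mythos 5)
The paper offers no proof of this statement: it is imported directly from the literature (Biggs; Cori--Rossin; Klivans's book), so there is no in-paper argument to measure yours against. Judged on its own terms, your proposal follows the standard route and its first two stages are sound: termination and confluence of the non-root stabilization (connectedness makes the root a sink that drains chips), and the observation that any legal process from $s$ is forced to trace the orbit $\mathrm{stab}(s),\sigma(\mathrm{stab}(s)),\sigma^{2}(\mathrm{stab}(s)),\dots$ of the self-map $\sigma(c)=\mathrm{stab}(c+\beta)$ of the finite set of stable configurations; this orbit is eventually periodic, and every configuration on the terminal cycle is stable and recurrent, hence critical. Existence of a reachable critical configuration is therefore complete.

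The step you yourself flag is a genuine gap as written: you must show $\sigma(c)=c$ for every stable recurrent $c$, since otherwise the process cycles forever through $p>1$ distinct critical configurations and the ``unique critical configuration reached'' is not well defined. You only assert this by appeal to Dhar's criterion or to the sandpile-group transversal property; the latter risks circularity, because the fact that distinct critical configurations lie in distinct cosets is usually proved using the very fixed-point property you want. The gap closes with two elementary lemmas. First, if $c$ is stable, then in the stabilization of $c+\beta$ each non-root vertex fires at most once: the first vertex $v$ to fire twice would need to have handled $2\deg(v)$ chips, but it starts with at most $\deg(v)-1+\beta(v)$ and, every other vertex having fired at most once so far, has received at most $\deg(v)-\beta(v)$ from its non-root neighbours, a total of $2\deg(v)-1$. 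Second, in a legal loop at $c$ containing $k\geq1$ root firings, the firing vector $z$ of the non-root vertices satisfies $\widetilde{L}z=k\beta=k\widetilde{L}\mathbf{1}$, where $\widetilde{L}$ is the reduced Laplacian; nonsingularity of $\widetilde{L}$ (connectedness again) forces $z=k\mathbf{1}$. Combining the two, each non-root vertex fires exactly once in each of the $k$ stabilization phases, so the net change over the first phase is $\beta-\widetilde{L}\mathbf{1}=0$ and $\sigma(c)=c$. With that inserted, your argument proves the theorem.
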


The set of critical configurations on $G$  with a root  can be given the structure of an abelian group, of which the order   is equal to the number of spanning trees of $G$.

 A sequence $(a_1, a_2, \ldots a_n)$ is  called a {\em parking function of length $n$} if
 there is a permutation $\sigma$ on $[n]$ such that $1\leq a_{\sigma(i)} \leq i$. Let $\mathcal{P}_n$ be the set of all parking functions of length $n$. As usual, the complete 2-graph on $n$ vertices is denoted $K_n$.

 \begin{lemma}[\cite{sandpilegroupdualgraph}]\label{prop_parkingfunc}
 A configuration $(a_1, \ldots, a_n)$ of the complete graph $K_{n+1}$ on vertex set $[n]_0$ (with 0 as the root) is a critical configuration if and only if $(n-a_1, n-a_2, \ldots n-a_n)$ is a parking function of length $n$.
\end{lemma}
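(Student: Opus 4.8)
The plan is to certify recurrence directly, by exhibiting an explicit legal firing sequence for every configuration whose vertexwise complement is a parking function, and then to close the ``if and only if'' by a cardinality coincidence rather than arguing the converse implication by hand.

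First I would record the normalization step. In $K_{n+1}$ every vertex has degree $n$, so a configuration $(a_1,\dots,a_n)$ on $[n]_0$ with root $0$ is stable exactly when $0\le a_i\le n-1$ for each $i$, i.e.\ exactly when $b_i:=n-a_i$ lies in $\{1,\dots,n\}$. Hence $(n-a_1,\dots,n-a_n)$ is a sequence eligible to be a parking function of length $n$ precisely when $(a_1,\dots,a_n)$ is stable, and what remains is to see that ``recurrent'' corresponds to the parking condition that the $b_i$ admit a reordering whose $t$-th term is at most $t$.

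For the substantive direction, I would take $b=(b_1,\dots,b_n)$ to be a parking function and fix a permutation $\sigma$ of $[n]$ with $b_{\sigma(t)}\le t$ for all $t$. Starting from the stable configuration $c=(n-b_1,\dots,n-b_n)$, I would first fire the root (legal, since $c$ is stable), which deposits one chip on every non-root vertex, and then fire the non-root vertices in the order $\sigma(1),\sigma(2),\dots,\sigma(n)$. Since $K_{n+1}$ is complete, just before $\sigma(t)$ is fired exactly $t$ of its neighbours have already fired (the root together with $\sigma(1),\dots,\sigma(t-1)$), so $\sigma(t)$ holds $a_{\sigma(t)}+t=n-b_{\sigma(t)}+t\ge n=\deg(\sigma(t))$ chips and its firing is legal; one also checks that the configuration is never stable until the final firing, so no forced root move intervenes. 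After all $n$ non-root vertices have fired once, each vertex $i$ has gained one chip from the root and one from each of the other $n-1$ non-root vertices and lost $n$ chips by firing itself, so the configuration is $c$ again. Thus $c$ is reached from $c$ by a legal sequence, so $c$ is recurrent, hence critical; this makes $b\mapsto(n-b_1,\dots,n-b_n)$ an injection from $\mathcal{P}_n$ into the set of critical configurations of $K_{n+1}$.

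To finish I would appeal to the fact quoted just after Theorem~\ref{confluenceroot}, that the number of critical configurations of $K_{n+1}$ equals its number of spanning trees, which is $(n+1)^{n-1}$ by Cayley's formula, together with the classical count $|\mathcal{P}_n|=(n+1)^{n-1}$. An injection between finite sets of the same size is a bijection, so every critical configuration arises in this way, which is exactly the claimed equivalence. I expect the only delicate point to be the bookkeeping in the firing sequence (and the check that no premature stable configuration forces a root firing); a direct proof of the converse via Dhar's burning algorithm from the root would also work, but the counting shortcut keeps the argument short.
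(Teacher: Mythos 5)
The paper gives no proof of this lemma at all: it is imported verbatim from Cori--Rossin \cite{sandpilegroupdualgraph}, so there is nothing internal to compare against, and your argument has to stand on its own. It does. The normalization (stable $\Leftrightarrow$ each $b_i=n-a_i\in\{1,\dots,n\}$), the explicit legal sequence (root, then $\sigma(1),\dots,\sigma(n)$, with $\sigma(t)$ holding $n-b_{\sigma(t)}+t\ge n$ chips when fired, and net change zero at the end), and the conclusion that every such complement-of-a-parking-function is critical are all correct; combined with the fact recorded after Theorem~\ref{confluenceroot} that the critical configurations number $(n+1)^{n-1}$ by Cayley, the injection-between-equinumerous-sets step legitimately closes the equivalence. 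This is a genuinely different route from the standard treatments (Cori--Rossin and Klivans prove the converse directly, typically via Dhar's burning algorithm, and then \emph{deduce} $|\mathcal{P}_n|=(n+1)^{n-1}$ as a corollary --- which is exactly the order in which the present paper states things). The one point you must be careful about is therefore a potential circularity: you may not take $|\mathcal{P}_n|=(n+1)^{n-1}$ from this paper, since here it is derived \emph{from} the lemma you are proving. You need to invoke an independent proof of that count, e.g.\ Pollak's cyclic-group argument ($1/(n+1)$ of the $(n+1)^n$ sequences over $\mathbb{Z}/(n+1)$ are parking functions), and say so explicitly. With that citation supplied, the counting shortcut is sound and arguably cleaner than running the burning algorithm; what it costs you is any structural description of \emph{why} a critical configuration must have a parking-function complement, which the burning-algorithm proof provides and which is closer in spirit to how the rest of Section 4 of this paper analyzes toppling sequences.
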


Using Cayley's Formula for the number of spanning trees in a complete graph, together with the above lemma, it follows that $|\mathcal{P}_n|=(n+1)^{n-1}$.

\section{A reduction to digraphs}

A \textit{directed graph} (or just \textit{digraph})  $D$ consists of a non-empty finite set
$V (D)$ of elements called \textit{vertices} and a finite set $E(D)$ of ordered pairs of
distinct vertices called \textit{arcs}. Note that   parallel (also called multiple) arcs   or loops are not permitted  by definition.
An arc $(u, v)$ in $D$ is usually denoted  by $u\rightarrow v$.
 In $D$ a vertex $y$ is \textit{reachable} from a vertex $x$ if there is a directed path in $D$ from $x$ to $y$, and $y$ is \textit{strongly connected} to $x$ if either is reachable from the other.
 The distance from $x$ to $y$ in  $D$,  denoted $\mathrm{dist}(x, y)$, is the minimum length of a directed path from $x$ to $y$,
if $y$ is reachable from $x$;  otherwise $\mathrm{dist}(x, y)=\infty$.
A digraph is \textit{strongly connected} (or, just, \textit{strong})  if every vertex is reachable from every other vertex in $D$.
 A \textit{strong component} of a digraph $D$ is a maximal induced subdigraph
of $D$ which is strong. For a digraph  $D$ with vertices  $v_1,\ldots, v_n$, the \textit{adjacency matrix} $A(D) =(a_{ij})$
of   $D$  is an $n\times n$ matrix such that $a_{ij}=1$  if $v_i\rightarrow v_j$ and $a_{ij}=0$
otherwise.  Next we shall be concerned with the characteristic polynomial of $D$, whose coefficients can be computed via the collection of all cycles in $D$ as follows.

\begin{prop}[\cite{theorygraphspectra}]\label{prop_charpoly_digraph}
Let
\[\phi_D(\lambda)=\mathrm{Det}(\lambda I-A)=\lambda^n+a_1\lambda^{n-1}+\cdots+a_n\]
 be the characteristic polynomial of an arbitrary digraph $D$ with $A$ as its adjacency matrix. Then
\[
a_i= \sum_{L\in \mathfrak{L}_i}(-1)^{p(L)} \qquad (i=1,\ldots ,n),
\]
where $\mathfrak{L}_i$ is the set of all linear directed subgraphs $L$ of $D$  with exactly $i$ vertices; $p(L)$ denotes the number of components of $L$ (i.e., the number of cycles to which $L$ is composed.)
\end{prop}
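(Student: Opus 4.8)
The plan is to prove the identity by expanding the characteristic polynomial in terms of principal minors of $A$ and then giving each minor a combinatorial meaning through the Leibniz expansion of the determinant. First I would invoke the standard fact that if $\phi_D(\lambda)=\det(\lambda I - A)=\lambda^n + a_1\lambda^{n-1}+\cdots+a_n$, then for each $i$,
\[
a_i=(-1)^i\sum_{\substack{S\subseteq V(D)\\ |S|=i}}\det\bigl(A[S]\bigr),
\]
where $A[S]$ denotes the principal submatrix of $A$ indexed by the vertex subset $S$. This reduces the task to interpreting each $\det\bigl(A[S]\bigr)$ as a signed count of directed structures on $S$.

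Next I would apply the Leibniz formula to a fixed principal minor,
\[
\det\bigl(A[S]\bigr)=\sum_{\sigma}\mathrm{sgn}(\sigma)\prod_{j\in S}a_{j,\sigma(j)},
\]
the sum running over all permutations $\sigma$ of $S$. Since the entries of the adjacency matrix lie in $\{0,1\}$, a term survives exactly when $a_{j,\sigma(j)}=1$ for every $j\in S$, i.e. when $D$ contains all the arcs $j\rightarrow\sigma(j)$. Because $D$ carries no loops, the diagonal entries vanish and every surviving $\sigma$ must be fixed-point-free on $S$. Decomposing such a $\sigma$ into its disjoint cycles, each cycle $j_1\to j_2\to\cdots\to j_\ell\to j_1$ is realized by a genuine directed cycle of $D$ on $\{j_1,\dots,j_\ell\}$; conversely, every linear directed subgraph spanning $S$ (a vertex-disjoint union of directed cycles) determines exactly one such permutation. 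Hence the surviving terms are in bijection with the linear directed subgraphs $L$ of $D$ with vertex set $S$.

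The remaining step is the sign bookkeeping. A permutation with cycle lengths $\ell_1,\dots,\ell_{p}$, where $\sum_k \ell_k = |S| = i$, has sign $\prod_{k}(-1)^{\ell_k-1}=(-1)^{\,i-p}$, in which $p=p(L)$ is the number of cycles, and therefore the number of components of the corresponding $L$. Substituting gives $\det\bigl(A[S]\bigr)=\sum_{L}(-1)^{\,i-p(L)}$, summed over linear directed subgraphs $L$ spanning $S$. Summing over all $S$ with $|S|=i$ assembles the family $\mathfrak{L}_i$, and inserting the leading factor $(-1)^i$ yields
\[
a_i=(-1)^i\sum_{L\in\mathfrak{L}_i}(-1)^{\,i-p(L)}=\sum_{L\in\mathfrak{L}_i}(-1)^{p(L)},
\]
using $(-1)^i(-1)^{i-p(L)}=(-1)^{2i-p(L)}=(-1)^{p(L)}$, as desired.

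I would expect the principal-minor expansion and the Leibniz step to be entirely routine; the only point demanding genuine care is the combination of the bijection with the sign computation — specifically, verifying that the parity of a fixed-point-free permutation equals $(-1)^{\,i-p(L)}$, and that in a digraph (where each directed cycle carries an intrinsic orientation and parallel arcs are forbidden) the correspondence between surviving permutations and linear directed subgraphs is exactly one-to-one rather than many-to-one.
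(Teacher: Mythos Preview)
Your argument is correct and is exactly the standard textbook derivation of this coefficient formula. Note, however, that the paper does not supply its own proof of this proposition: it is quoted as a known result from Cvetkovi\'c--Doob--Sachs (the reference \cite{theorygraphspectra}), so there is no in-paper proof to compare against. Your principal-minor expansion followed by the Leibniz formula and the cycle-sign computation is precisely the argument one finds in that source.
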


Note that the characteristic polynomial of   digraph $D$ is the product of the characteristic polynomials corresponding to the principal submatrices indexed by the collection of all strong  components of $D$.

\subsection{A toppled digraph}

Let  $\mathcal{H}$ be an $r$-graph on a vertex set $V=\{v_0, v_1, \ldots, v_n\}$.
 Given any ordering $ \succ$ on $V$, and any function $x=(x_v)$  on $V$, there is a natural ordering on the variables that is consistent with the ordering $\succ$ as follows
 \[
 x_u>x_v\quad \mathrm{whenever}\quad u\succ v.
 \]

Choose an ordering on the variables, say $x_0>x_1>\cdots> x_n$.  The characteristic polynomial $\phi_{\mathcal{H}}(\lambda)=\mathrm{Res}(F_0, F_1, \ldots, F_n)$, where $F_i:=\lambda x_{v_i}^{r-1}-\sum_{e\in E_{v_i}}x_{e\setminus\{v_i\}}$.
As in \eqref{systm_equatns}, regarding the monomials of total degree $d$ (note that $d=(r-2))(n+1)+1$ now) as unknowns, we have a corresponding system of $N$ linear equations in $N$ unknowns.
Recall the determinant of the coefficient matrix associated with this linear system  is $D_n$. Similarly the determinant $D_i$ can be obtained when $x_i$ ordered as the last variable. For our purpose we are only concerned with $D_n$. A key point needed for this reduction to a digraph is the following.

\begin{obsv}
The determinant $D_n$ can be viewed as the characteristic polynomial of a directed graph $\mathfrak{D}_n$ which has $S^{(n, d)}$, the set of all monomials of total degree $d$, as its vertex set, and
\begin{itemize}
  \item  for any $x^{\alpha}, x^{\beta} \in S^{(n, d)}$ and  assume that $x^{\alpha} \in S_k$, $x^{\alpha}$ is connected to $x^{\beta}$ by an arc if and only if $x^{\beta}/x^{\alpha}=x_e/x_{v_k}^{r}$ for some incidence $(v_k, e)$ in  $\mathcal{H}$.
\end{itemize}
\end{obsv}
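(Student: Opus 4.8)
\noindent\emph{Proof idea.}
The plan is to expand the polynomials on the left of \eqref{systm_equatns} in the monomial basis of degree $d$, read off the coefficient matrix, and recognize it as $\lambda I-A(\mathfrak{D}_n)$. Fix the ordering $x_0>x_1>\cdots>x_n$, recall that $d=(r-2)(n+1)+1$, and recall (as noted above) that $S_0,\dots,S_n$ partition $S^{(n,d)}$; hence both the equations of \eqref{systm_equatns} and the unknown monomials are indexed by $S^{(n,d)}$, so the system is square of size $N=|S^{(n,d)}|$, which is also the number of vertices of $\mathfrak{D}_n$.

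First I would compute a single row. If $x^{\alpha}\in S_k$, then $x_{v_k}^{r-1}\mid x^{\alpha}$, and using $x_e=x_{v_k}\,x_{e\setminus\{v_k\}}$ for $e\in E_{v_k}$ we get
\[
\frac{x^{\alpha}}{x_{v_k}^{r-1}}\cdot F_k
=\lambda\,x^{\alpha}-\sum_{e\in E_{v_k}}\frac{x^{\alpha}}{x_{v_k}^{r-1}}\,x_{e\setminus\{v_k\}}
=\lambda\,x^{\alpha}-\sum_{e\in E_{v_k}}x^{\beta_e},\qquad
x^{\beta_e}:=x^{\alpha}\,\frac{x_e}{x_{v_k}^{r}}.
\]
I would then verify that each $x^{\beta_e}$ is a legitimate vertex of $\mathfrak{D}_n$: it has total degree $d$ because $x_e/x_{v_k}^{r}$ has degree $0$, and it has nonnegative exponents because $x_{v_k}^{r-1}\mid x^{\alpha}$ together with $x_{v_k}\mid x_e$ gives $x_{v_k}^{r}\mid x^{\alpha}x_e$; that $x^{\beta_e}\neq x^{\alpha}$, because for $r\geq2$ an edge is a set of $r$ distinct vertices, so $x_e$ is squarefree and cannot equal $x_{v_k}^{r}$; and that $e\mapsto x^{\beta_e}$ is injective on $E_{v_k}$ since distinct edges have distinct vertex sets. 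Consequently $\{x^{\beta_e}:e\in E_{v_k}\}$ is exactly the out-neighbourhood of $x^{\alpha}$ in $\mathfrak{D}_n$, each out-neighbour joined by a single arc, and $\mathfrak{D}_n$ has no loops and no parallel arcs, so it is a bona fide digraph.

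Assembling the rows, in the equation indexed by $x^{\alpha}\in S_k$ the coefficient of the unknown $x^{\alpha}$ is $\lambda$, and the coefficient of a different unknown $x^{\gamma}$ is $-1$ when $x^{\alpha}\to x^{\gamma}$ is an arc of $\mathfrak{D}_n$ and $0$ otherwise --- this is precisely the arc rule in the statement, since $x^{\gamma}=x^{\beta_e}$ is the same as $x^{\gamma}/x^{\alpha}=x_e/x_{v_k}^{r}$. Indexing the rows and columns of the coefficient matrix by $S^{(n,d)}$ with the same order, the matrix of \eqref{systm_equatns} is therefore exactly $\lambda I-A(\mathfrak{D}_n)$, whence $D_n=\det(\lambda I-A(\mathfrak{D}_n))=\phi_{\mathfrak{D}_n}(\lambda)$, which is the assertion. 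As a sanity check, when $r=2$ one has $d=1$, $S_k=\{x_{v_k}\}$, and $\mathfrak{D}_n$ is the symmetric orientation of $\mathcal{H}$, so $D_n=\phi_{\mathcal{H}}(\lambda)$, consistent with the fact that the resultant formalism should recover the ordinary characteristic polynomial.

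The argument is essentially careful bookkeeping, and I do not anticipate a serious obstacle; the points that require genuine attention are confirming that the substitution $x^{\alpha}\mapsto x^{\alpha}x_e/x_{v_k}^{r}$ always lands on an honest monomial of degree $d$ and that it produces no self-arcs or repeated arcs, so that the matrix identity holds on the nose rather than up to sign or a spurious factor. Once this is in place, Proposition~\ref{prop_charpoly_digraph} becomes available and expresses the coefficients of $D_n$ through the linear directed subgraphs (disjoint unions of directed cycles) of $\mathfrak{D}_n$, which is exactly what the subsequent reduction to digraphs will exploit.
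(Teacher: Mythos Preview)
Your argument is correct and is precisely the bookkeeping the paper has in mind: the paper states this as an Observation without proof, so there is no ``paper's proof'' to compare against beyond the implicit expansion of $\frac{x^{\alpha}}{x_{v_k}^{r-1}}F_k$ that you carry out explicitly. Your checks that $x^{\beta_e}\in S^{(n,d)}$, that no loops or parallel arcs arise, and that the coefficient matrix equals $\lambda I-A(\mathfrak{D}_n)$ on the nose are exactly the verifications needed, and they supply the details the paper leaves to the reader.
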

Note that a vertex $x^{\alpha}$ in  $\mathfrak{D}_n$ which belongs to $S_k$  has outdegree   equal to  the degree of vertex $v_k$ in $\mathcal{H}$.

Each monomial $x^{\alpha}=x_0^{a_0}x_1^{a_1}\cdots x_n^{a_n}$ of total degree $d$ can be viewed as an assignment of chips on vertices of $\mathcal{H}$, i.e., assigning a total sum of $d$ chips to $\mathcal{H}$ and exactly $a_i$ chips is assigned to each vertex $v_i$. If  $x^{\alpha} \in S_k$ and $x^{\beta}/x^{\alpha}=x^e/x_{v_k}^{r}$ for some incidence $(v_k, e)$ in  $\mathcal{H}$, then  we  say that $x^{\beta}$ is obtained from $x^{\alpha}$ by toppling   the incidence $(v_k,e)$. By toppling  an incidence $(v, e)$ we mean that chips from $v$ are redistributed to the adjacent vertices as follows:  $r-1$ chips are taken from vertex $v$ and one chip is added to each of $r-1$ neighbors of $v$ in $e$, the assignments of all remaining vertices are unchanged.  For convenience,   we also say that vertex {\em $v$ is toppled inside edge $e$}. We note that when vertex $v$ is toppled, any incident edge can be chosen to topple and thus there are $\deg(v)$ such choices.
 A vertex $v$ can only be toppled (inside some edge $e$ incident to $v$) in $r$-graph $\mathcal{H}$ according to the toppling rule as follows:
\begin{itemize}
    \item vertex  $v$ has  at least $r-1$ chips   but any vertex (according to the given ordering)  larger than $v$ does not.
  \end{itemize}

A vertex with at least $r-1$ chips is said to be an {\em unstable} vertex of $\mathcal{H}$, and by the toppling rule,  when there is more than one unstable vertex, only the largest one of them  should be toppled. For a given configuration $\alpha$ on an $r$-graph $\mathcal{H}$ and for an edge $e$ in $\mathcal{H}$, we say the incidences determined by $e$ can be toppled  {\em contiguously} starting from $\alpha$ if with respect to
$\alpha$, some vertex in  $e$ is unstable and any unstable vertex not in $e$ is less than all vertices in $e$.   Consequently we have the following result, whose proof is simple and is omitted.

\begin{lemma}\label{lem_unique-topple-order}
If  the incidences determined by an edge $e$ occur contiguously, then  the ordering of toppling such incidences determined by $e$ is unique.
\end{lemma}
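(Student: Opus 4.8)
The plan is to reduce the statement to the determinism of the toppling rule, using the elementary fact that toppling an incidence inside $e$ alters chip counts only at the $r$ vertices of $e$. Write $e=\{u_1,u_2,\dots,u_r\}$ with $u_1\succ u_2\succ\cdots\succ u_r$, and let $\alpha$ be the configuration from which the incidences of $e$ are toppled contiguously. Since the $r-1$ chips moved in toppling $(u_i,e)$ are removed from $u_i$ and deposited only onto the remaining vertices of $e$, the chip count at every vertex of $V\setminus e$ stays equal to its $\alpha$-value throughout the whole contiguous block of topplings devoted to $e$. Combined with the contiguity hypothesis that every unstable vertex of $\alpha$ lying outside $e$ is $\prec$ each of $u_1,\dots,u_r$, this yields the invariant: at the beginning of every step of this block, each unstable vertex outside $e$ is strictly $\prec$ every vertex of $e$.

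Next I would induct on the position $k$ of a step within the block. Assuming the first $k-1$ topplings of the block are uniquely forced, the configuration $\alpha_{k-1}$ reached just before step $k$ is itself determined. By hypothesis step $k$ is a toppling inside $e$, so some vertex of $e$ is unstable in $\alpha_{k-1}$; and the toppling rule dictates that the vertex actually toppled be the $\succ$-largest unstable vertex of $\mathcal{H}$ in $\alpha_{k-1}$. By the invariant of the previous paragraph that vertex must lie in $e$, hence it equals the $\succ$-largest unstable vertex among $u_1,\dots,u_r$ in $\alpha_{k-1}$ --- a uniquely defined vertex, since $\succ$ is a strict total order. Thus step $k$ is forced, the induction goes through, and the ordering in which the incidences of $e$ are toppled is unique.

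The argument is essentially routine once one observes that topplings inside $e$ never disturb the chip counts, and hence neither the stability status nor the relative $\succ$-order, of vertices outside $e$. The only point demanding a little care is the reading of the hypothesis: ``the incidences of $e$ occur contiguously'' must mean that, in the process under consideration, these $r$ topplings form one uninterrupted run with no intervening topplings, which is exactly what legitimizes the inductive assumption that step $k$ is a toppling inside $e$. Granting that, the claim is an immediate consequence of the determinism of the rule ``topple the $\succ$-largest unstable vertex,'' so I expect no substantive obstacle here.
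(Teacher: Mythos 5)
Your argument is correct. The paper omits the proof of this lemma as ``simple,'' and your proof supplies exactly the intended routine argument: topplings inside $e$ leave chip counts outside $e$ untouched, so the contiguity hypothesis propagates as an invariant, and then the rule ``topple the $\succ$-largest unstable vertex'' forces each step deterministically by induction.
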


In order to study toppling on a hypergraph, we consider the chip-firing process on a labelled graph (without a  root),  on which
 an ordering of the vertices is given. Then for the chip-firing process on labelled graph, when there are multiple vertices ready to fire, only the largest labelled vertex can be fired. Consequently the firing order is unique.

For an $r$-graph  $\mathcal{H}$ with an initial configuration $\alpha$,  if   the incidences determined by an edge $e=\{i_1, i_2, \ldots, i_r\}$ can be toppled contiguously from $\alpha$, then the topplings on $\mathcal{H}$ in $e$ reduce to a chip-firing process on the labelled complete 2-graph $K_{r}$ with vertex set $\{i_1, i_2, \ldots, i_r\}$. Assume that $i_r\succ i_{r-1}\succ\cdots\succ i_1$ in  $\mathcal{H}$, and that this ordering is adopted on the set $V(K_{r})$.
Consider chip-firing on $K_{r}$ with an initial configuration such that each vertex $i_j$ has $\alpha(i_j)$ chips for $j=1, \ldots, r$.
The vertex $i_1$ is distinguished since on the one hand,  it can be fired only when $i_2, \ldots, i_r$ are all stable so it is natural that $i_1$ serves as a root; on the other hand,  it behaves as a non-root vertex since it can be fired only when it has at least $r-1$ chips.
Thus, in a sense  the chip-firing process on labelled graph evolves in a manner just like both types of chip-firing processes on the unlabelled graph, namely, one with a root and one without a root. Furthermore, if the chip-firing process on  $K_{r}$ does not terminate, it must reach a unique critical configuration as implied by the
confluence property of chip-firing (see Theorems~\ref{confluencenoroot} and \ref{confluenceroot}).


Although digraphs $\mathfrak{D}_0, \mathfrak{D}_1, \ldots, \mathfrak{D}_n$ all have the same vertex set $S^{(n, d)}$, they have different arc sets.
Each $\mathfrak{D}_i$ is called a \textit{toppled digraph} of $\mathcal{H}$ (rooted at $v_i$, for specifically). For example, if $\mathcal{H}$ is  a 3-graph of order 5, then $d=6$ and a monomial $x_0^2x_3^4$ belongs to $S_3$  in $\mathfrak{D}_0$ while it belongs to $S_0$  in $\mathfrak{D}_3$.

\subsection{Recurrent configurations}

There is an obvious one-to-one correspondence between monomials $x^{\alpha}$  and vectors $\alpha$ from which most of the terminology for one also applies to the other. For an $r$-graph $\mathcal{H}$ of order $n+1$, the vector $\alpha=(a_0, a_1, \ldots,a_n)$ corresponding to a monomial $x^{\alpha}=x_0^{a_0}x_1^{a_1}\cdots x_n^{a_n}$
 of total degree $d$  is called a {\em configuration} on  $\mathcal{H}$, which can also be viewed as a function defined on the vertices of  $\mathcal{H}$ such that $a_i$ is the value of the function at vertex $v_i$, i.e. $\alpha(v_i)=a_i$. Suppose that $\mathcal{Z}$ is a non-empty finite sequence of (not necessarily distinct) incidences of  $\mathcal{H}$, such that starting from configuration $\alpha$, the incidences can be toppled in the order of $\mathcal{Z}$ (called a \textit{toppling sequence}). If $(v,e)$ occurs $z(v,e)$ times, we shall refer to the vector $z$,   indexed by all incidences, as the representative vector for $\mathcal{Z}$. The configuration $ \alpha'$ obtained from a sequence of topplings is given by
\[
\alpha'(v)=\alpha(v)-(r-1)\sum_{e\in E_v}z(v,e)+\sum_{f\in E_v, v\neq w\in f}z(w, f).
\]

The relationship between $ \alpha$ and $ \alpha'$ can be written more conveniently based on the {\em generalized incidence matrix} $B$, whose rows are indexed by vertices and columns are indexed by incidences, as follows:
\[
B_{v, (u,e)}=\left\{
                 \begin{array}{ll}
                   r-1, & \hbox{$v=u\in e$;} \\
                   -1, & \hbox{$v, u\in e,v\neq u$;} \\
                   0, & \hbox{otherwise.}
                 \end{array}
               \right.
\]

In terms of $B$ the relationship between $ \alpha$ and $ \alpha'$ is given by
\[
\alpha'=\alpha-Bz.
\]

Note that all row and column sums of $B$ are equal to zero.

\begin{lemma}\label{lem_rank}
For any connected hypergraph of order $n+1$ with associated generalized incidence matrix $B$, the rank of $B$  is  equal to $n$.
\end{lemma}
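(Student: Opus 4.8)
The plan is to show both $\operatorname{rank}(B)\le n$ and $\operatorname{rank}(B)\ge n$ for a connected hypergraph $\mathcal H$ of order $n+1$. The upper bound is immediate: every column of $B$ sums to zero (an incidence $(u,e)$ contributes $r-1$ in row $u$ and $-1$ in each of the other $r-1$ rows of $e$), so the all-ones row vector $\mathbf 1^{\mathsf T}$ annihilates $B$ from the left, i.e. the rows of $B$ are linearly dependent and $\operatorname{rank}(B)\le n$. The substance of the lemma is therefore the reverse inequality, which amounts to showing that the left null space of $B$ is exactly one-dimensional, spanned by $\mathbf 1$.

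For the lower bound I would argue as follows. Suppose $y=(y_v)_{v\in V}$ is a row vector with $y^{\mathsf T}B=0$. Fix an edge $e$ and a vertex $u\in e$; the column of $B$ indexed by the incidence $(u,e)$ gives the equation
\[
(r-1)\,y_u \;=\; \sum_{w\in e,\,w\neq u} y_w .
\]
Doing this for two distinct vertices $u,u'\in e$ and subtracting yields $(r-1)y_u - (r-1)y_{u'} = y_{u'} - y_u$, i.e. $r\,(y_u - y_{u'}) = 0$, hence $y_u = y_{u'}$. Thus $y$ is constant on every edge of $\mathcal H$. Since $\mathcal H$ is connected, any two vertices are joined by a path through overlapping edges, so $y$ is constant on all of $V$; that is, $y$ is a scalar multiple of $\mathbf 1$. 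Hence $\dim(\text{left null space of }B)=1$, and since $B$ has $n+1$ rows we get $\operatorname{rank}(B) = (n+1) - 1 = n$, as claimed. (When $\mathcal H$ is a single vertex with no edges, $B$ is the empty matrix and $n=0$, so the statement holds trivially.)

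The only mildly delicate point — and the place where it is worth being careful — is the step deducing that $y$ is globally constant from the fact that it is constant on each edge: this uses connectedness of $\mathcal H$ together with the fact that edges have size $r\ge 2$, so that an edge shared between two "edge-components" of constancy forces their values to agree along a connecting walk. No obstruction of substance arises here; the argument is short and self-contained, and I would present it essentially in the two paragraphs above. An alternative, slightly slicker route is to observe that $BB^{\mathsf T}$ is (up to the obvious bookkeeping) a positive semidefinite matrix closely related to a weighted Laplacian of the "2-shadow" graph of $\mathcal H$, whose kernel for a connected graph is one-dimensional; but the direct column-equation computation above avoids introducing that machinery and is cleaner for the paper.
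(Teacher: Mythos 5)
Your proposal is correct and follows essentially the same route as the paper: both arguments show that any row vector $y$ with $y^{\mathsf T}B=0$ must be constant on each edge (the paper via the block $rI-J$ formed by the columns of one edge, you via subtracting two column equations, which is the same computation) and then propagate constancy across the connected hypergraph, giving a one-dimensional left null space. Your version is slightly more explicit about the upper bound $\operatorname{rank}(B)\le n$, which the paper leaves to the earlier remark that all column sums of $B$ vanish.
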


\begin{proof}
Permute the columns of $B$ and partition the columns of $B$ as    $B=(B_{e_1}|B_{e_2}|\cdots |B_{e_m})$, where the $B_{e_i}$ denote the whole-rows submatrix of $B$ with columns indexed by all incidences determined by $e_i$. For any $(n+1)$-dimensional vector $z$, if $z^TB=0$ then $z^TB_{e_i}=0$  for all $i=1,\ldots, m$.  We may assume $B_{e_1}=(rI-J|O)^T$ because we can permute the rows of $B$ appropriately as needed, where $I$ and $J$ are the identity matrix and all-ones matrix of  order $r$, respectively.  Thus $(z_1,\ldots, z_r)^T(rI-J)=0$ and this implies that $z_1=\cdots=z_r$. Since the hypergraph $\mathcal{H}$ is connected, there exists some edge, say $e_2$ (=$\{i_1, i_2, \ldots, i_r\}$), such that $e_1\cap e_2\neq\emptyset$. Similarly it can be shown that $z_{i_1}=\cdots=z_{i_r}$. Further from $e_1\cap e_2\neq\emptyset$ it follows that   $z_1=\cdots=z_r=z_{i_1}=\cdots=z_{i_r}$ and so continuing in this way we know that  entries of $z$ are all the same.
\end{proof}

For an edge $e$ in $\mathcal{H}$, use $j_e$ to denote  the characteristic vector of the subset of all incidences determined by $e$ with respect to the set of all incidences in  $\mathcal{H}$, i.e., $j_e$ is an $mr$-dimensional $(0,1)$-vector whose $(w, f)$-th entry is one if and only if $w\in f=e$.

\begin{lemma}\label{lem_basis_kernelspace}
 The kernel of the generalized incidence matrix $B$ of an $r$-tree $\mathcal{T}$ with edge set $\{e_1,  e_2,\ldots , e_m\}$ has $ j_{e_1}, j_{e_2},\ldots ,j_{e_m} $  as a basis.
\end{lemma}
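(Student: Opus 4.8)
The plan is to exhibit $j_{e_1},\dots,j_{e_m}$ as $m$ linearly independent elements of $\ker B$ and then show $\dim\ker B=m$ by a dimension count, so that these vectors must form a basis. First I would check membership in the kernel. Fix an edge $e=e_i=\{i_1,\dots,i_r\}$. As in the proof of Lemma~\ref{lem_rank}, after an appropriate permutation of rows and columns the columns of $B$ indexed by the $r$ incidences $(w,e)$, $w\in e$, form a block whose only nonzero rows are those indexed by $i_1,\dots,i_r$, and on those rows the block equals $rI-J$ with $I,J$ of order $r$. Since $Bj_e$ is precisely the sum of these $r$ columns, it equals $(rI-J)\mathbf{1}=r\mathbf{1}-r\mathbf{1}=0$ on the rows of $e$ and is $0$ elsewhere; hence $Bj_e=0$. (Here I use that an $r$-tree is linear, so distinct edges determine disjoint sets of incidences and this edge-block decomposition of $B$ is well defined.)

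For linear independence, I would observe that, as a vector indexed by the incidences of $\mathcal{T}$, the support of $j_{e_i}$ is exactly $\{(w,e_i):w\in e_i\}$, and that every incidence $(w,f)$ determines its edge $f$ uniquely. Therefore $j_{e_1},\dots,j_{e_m}$ have pairwise disjoint supports and, being nonzero, are linearly independent.

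Finally I would count dimensions. By linearity of $\mathcal{T}$, the matrix $B$ has exactly $mr$ columns (one per incidence). By Lemma~\ref{lem_rank} its rank equals $n$, where the order of $\mathcal{T}$ is $n+1$. Applying Lemma~\ref{hypertree-order-size} to the $r$-tree $\mathcal{T}$ on $n+1$ vertices gives $m=\frac{(n+1)-1}{r-1}=\frac{n}{r-1}$, i.e.\ $n=m(r-1)$. Hence by the rank--nullity theorem $\dim\ker B=mr-n=mr-m(r-1)=m$. Since $\ker B$ contains the $m$ linearly independent vectors $j_{e_1},\dots,j_{e_m}$ and has dimension $m$, these vectors form a basis of $\ker B$, as claimed.

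I do not anticipate a genuine obstacle here; the only points requiring care are bookkeeping with the vertex count (the statement writes the order as $n+1$, whereas Lemma~\ref{hypertree-order-size} is phrased for a hypertree on $n$ vertices) and explicitly invoking that an $r$-tree is linear, so that $B$ really has $mr$ columns and splits into the edge-blocks used in the first step.
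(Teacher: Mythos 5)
Your proposal is correct and follows essentially the same route as the paper: verify $Bj_{e_i}=0$, note linear independence from the disjoint supports, and conclude by the rank--nullity count $mr-n=m$ using Lemma~\ref{lem_rank} and Lemma~\ref{hypertree-order-size}. The only difference is that you spell out the details the paper leaves as ``easy to verify''; note also that the count of $mr$ incidences and the disjointness of the supports hold for any $r$-uniform hypergraph, so the appeal to linearity is not actually needed there.
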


\begin{proof}
It is easy to verify that $B j_{e_1}=\cdots=B j_{e_m}=0$ and that $j_{e_1}, j_{e_2},\ldots ,j_{e_m}$ are linearly independent vectors. In addition, the dimension of kernel of $B$ is equal to the number of columns minus the rank of $B$, which equals $mr-n=m$, by Lemma~\ref{lem_rank} and Lemma~\ref{hypertree-order-size}.
\end{proof}

A configuration $\alpha$ on a hypergraph is said to be {\em recurrent} if the corresponding monomial  $x^{\alpha}$ can be toppled resulting in the same monomial after a nonempty sequence of topplings. Thus if configuration $\alpha$ is recurrent and has $z$ as its representative vector, then $\alpha=\alpha-Bz$ and so $Bz=0$. By Lemma~\ref{lem_basis_kernelspace}, $z$ can be uniquely expressed as a linear combination of $j_{e_1},  \ldots ,j_{e_m}$, with nonnegative integral  coefficients. This leads to an important property of recurrent configuration as follows.

\begin{lemma} \label{lem_sametimestoppled}
 Suppose $\alpha$ is a recurrent configuration and $e$ is an edge of  hypertree $\mathcal{T}$. Then each of  incidences   determined by $e$ must be toppled the same (maybe zero) times in the toppling sequence from $\alpha$ to itself.
\end{lemma}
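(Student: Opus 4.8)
The plan is to translate the combinatorial assertion into the linear-algebraic statement that the representative vector of any closed toppling sequence lies in $\ker B$, and then read off the conclusion from the explicit basis of $\ker B$ furnished by Lemma~\ref{lem_basis_kernelspace}.

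First I would fix a nonempty toppling sequence $\mathcal{Z}$ witnessing that $\alpha$ is recurrent, and let $z$ denote its representative vector, so that $z$ is a nonnegative integer-valued vector indexed by the incidences of $\mathcal{T}$, with $z(v,f)$ recording the number of times the incidence $(v,f)$ is toppled along $\mathcal{Z}$. Applying the formula $\alpha'=\alpha-Bz$ for the configuration $\alpha'$ reached after $\mathcal{Z}$, and using that $\mathcal{Z}$ returns to $\alpha$ (so $\alpha'=\alpha$), we obtain $Bz=0$; that is, $z\in\ker B$.

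Next I would invoke Lemma~\ref{lem_basis_kernelspace}: since $\mathcal{T}$ is an $r$-tree with edge set $\{e_1,\ldots,e_m\}$, the vectors $j_{e_1},\ldots,j_{e_m}$ form a basis of $\ker B$, so $z=\sum_{i=1}^m c_i\,j_{e_i}$ for uniquely determined scalars $c_i$. The key elementary observation is that the supports of the $j_{e_i}$ are pairwise disjoint---distinct edges of $\mathcal{T}$ determine distinct incidences---and that each $j_{e_i}$ equals $1$ on its support, namely on the $r$ incidences $(w,e_i)$ with $w\in e_i$, and $0$ elsewhere. Consequently, for each fixed edge $e=e_i$ and each vertex $w\in e_i$ we get $z(w,e_i)=c_i$; in particular $c_i$ is a nonnegative integer, and all $r$ incidences determined by $e$ are toppled exactly $c_i$ (possibly zero) times along $\mathcal{Z}$, which is precisely the claim.

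I do not expect a substantive obstacle here once Lemma~\ref{lem_basis_kernelspace} is available; the only point deserving a moment's care is the bookkeeping that the cumulative effect of a toppling sequence is governed by $B$ acting on its representative vector, which is exactly the content of the identity $\alpha'=\alpha-Bz$ recorded earlier. I would therefore keep the argument short, highlighting the pairwise-disjoint-support structure of the basis $\{j_{e_i}\}_{i=1}^m$ as the mechanism that forces equal toppling counts within each edge.
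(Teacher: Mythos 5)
Your argument is correct and is essentially identical to the paper's proof: both pass from recurrence to $Bz=0$ via $\alpha'=\alpha-Bz$, expand $z$ in the basis $j_{e_1},\ldots,j_{e_m}$ of $\ker B$ from Lemma~\ref{lem_basis_kernelspace}, and use the disjoint $0$--$1$ supports of these vectors to force all incidences of a given edge to be toppled the same nonnegative integer number of times. No changes are needed.
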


\begin{proof}
Let $z$ be the representative vector of the toppling sequence from  $\alpha$ to itself. Then $Bz=0$ and so  $z=c_1j_{e_1}+c_2j_{e_2}+\cdots +c_mj_{e_m}$  by Lemma~\ref{lem_basis_kernelspace}, and the coefficients $c_1, \ldots, c_m$ must be nonnegative integers because $z$ is a nonnegative integral vector and $j_{e_1}, j_{e_2},\ldots ,j_{e_m}$ are disjoint.
\end{proof}

As an immediate consequence, we have the following result.

\begin{prop} \label{coro_lengthmupltiple}
Any directed cycle of a toppled digraph of an  $r$-tree   has length equal to a multiple of $r$.
\end{prop}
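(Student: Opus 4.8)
The plan is to connect a directed cycle in a toppled digraph $\mathfrak{D}_n$ directly to a recurrent configuration of $\mathcal{T}$ and then invoke Lemma~\ref{lem_sametimestoppled}. First I would observe that, by the Observation, a vertex of $\mathfrak{D}_n$ is a monomial $x^{\alpha}$ of total degree $d$, i.e., a configuration $\alpha$ on $\mathcal{T}$, and an arc $x^{\alpha}\to x^{\beta}$ corresponds to toppling exactly one incidence $(v_k,e)$, where $x^{\alpha}\in S_k$. Hence a directed cycle $C: \alpha = \alpha^{(0)}\to\alpha^{(1)}\to\cdots\to\alpha^{(\ell)}=\alpha$ of length $\ell$ is precisely a toppling sequence $\mathcal{Z}$ of length $\ell$ that starts at $\alpha$ and returns to $\alpha$. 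Thus $\alpha$ is a recurrent configuration, and if $z$ is the representative vector of $\mathcal{Z}$, then $\alpha = \alpha - Bz$, so $Bz = 0$.

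Next I would apply Lemma~\ref{lem_basis_kernelspace} (and the argument in Lemma~\ref{lem_sametimestoppled}): since $\mathcal{T}$ is an $r$-tree, the kernel of $B$ is spanned by the disjoint $(0,1)$-vectors $j_{e_1},\ldots,j_{e_m}$, so $z = c_1 j_{e_1}+\cdots+c_m j_{e_m}$ with each $c_i$ a nonnegative integer. The total length of the cycle is the total number of topplings, $\ell = \sum_{(v,e)} z(v,e) = \sum_{i=1}^m c_i \, |j_{e_i}| = r\sum_{i=1}^m c_i$, because each $j_{e_i}$ has exactly $r$ nonzero entries (one incidence per vertex of the $r$-edge $e_i$). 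Therefore $\ell$ is a multiple of $r$.

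The only point requiring a little care — and the one I would flag as the main (though minor) obstacle — is the book-keeping that identifies the length of the directed cycle with the total number of topplings performed: one must be sure that each arc of $\mathfrak{D}_n$ corresponds to exactly one incidence-toppling (which is exactly the content of the Observation, since $x^{\beta}/x^{\alpha}=x_e/x_{v_k}^r$ encodes a single toppling of $(v_k,e)$), and that traversing the cycle once realizes a genuine toppling sequence from $\alpha$ to $\alpha$ — i.e., that the legality condition (largest unstable vertex) is automatically satisfied along the arcs of $\mathfrak{D}_n$. This legality is built into the definition of the arc set via the membership $x^{\alpha}\in S_k$, which forces $x_{v_k}^{r-1}\mid x^{\alpha}$ and (through the partition $S_0,\ldots,S_n$) that no larger-indexed variable raised to its required power divides $x^{\alpha}$ either, matching the toppling rule. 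Once this is in place, the conclusion $r \mid \ell$ is immediate from the kernel basis, and no separate argument is needed for the number of cyclic components.
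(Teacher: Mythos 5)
Your proposal is correct and follows essentially the same route as the paper: the paper derives this proposition as an immediate consequence of Lemma~\ref{lem_sametimestoppled}, whose proof is exactly your computation $Bz=0$, hence $z=c_1j_{e_1}+\cdots+c_mj_{e_m}$ with nonnegative integer $c_i$ by Lemma~\ref{lem_basis_kernelspace}, so the cycle length $\sum_{(v,e)}z(v,e)=r\sum_i c_i$. Your added remarks on legality being encoded in the arc set of $\mathfrak{D}_n$ are accurate but not needed beyond what the paper already builds into the definition.
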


\begin{figure}[!hbpt]
\begin{center}

\includegraphics[scale=0.9]{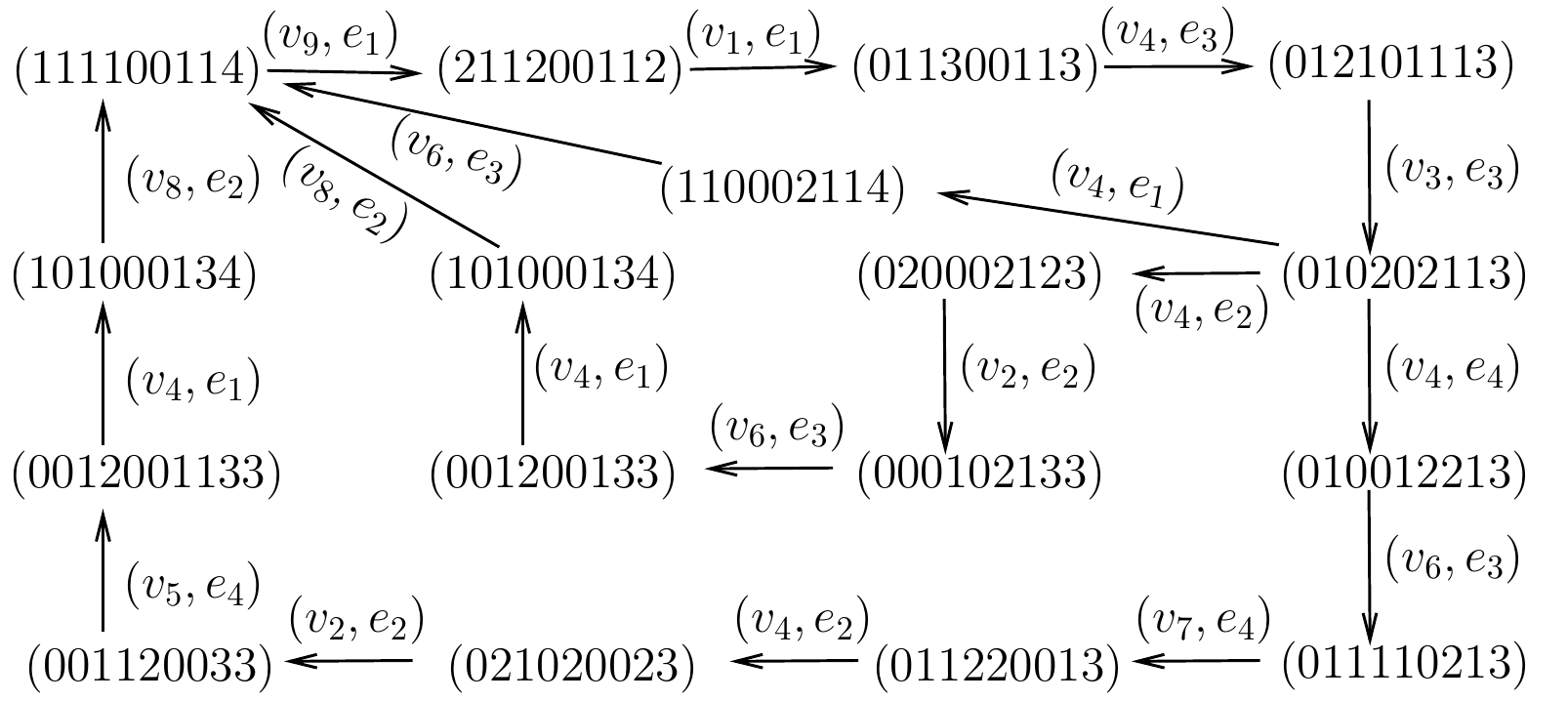}
\includegraphics[scale=0.9]{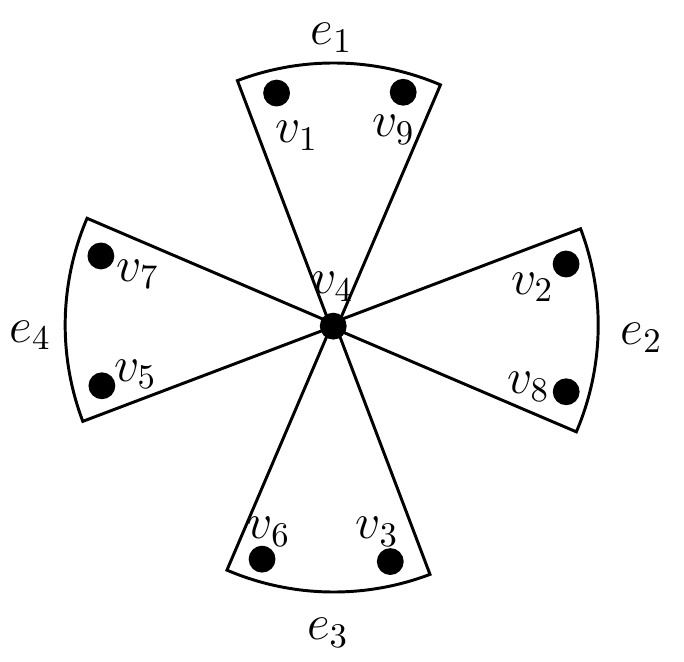}
\includegraphics[scale=0.9]{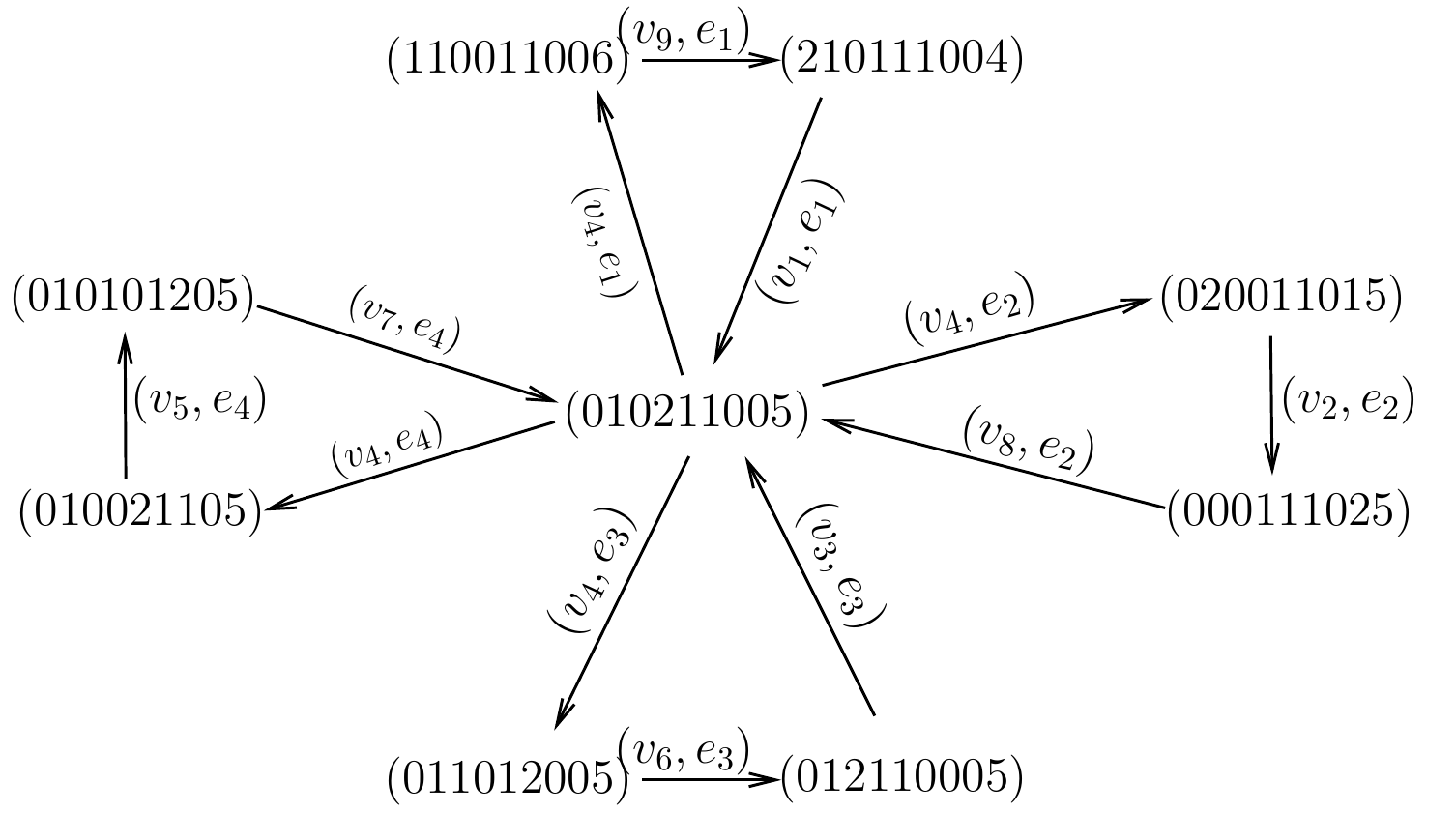}

\caption{Some cycles of length $3k$ for any $1\leq k\leq 4$ in the toppled digraph of a 3-tree  with an ordering given by  $x_i>x_j$ whenever $i<j$. }
\end{center}\label{fig_6cycle}
\end{figure}

\begin{example}\label{exam1}
For a hypergraph, its toppled digraph has a very large number of vertices and can be complicated in structure. However, this is the price  paid to reduce the characteristic polynomial of a hypergraph (defined by the resultant and difficult to compute) to the characteristic polynomial of a digraph. Take the hypertree in Fig.~1 as an example. It has $v_9$ as the root and the toppled digraph $\mathfrak{D}_9$ has $\binom{18}{8}=43758$ vertices. It is difficult to check the entire digraph. For our specific purpose, we just  consider some cycles to  provide some insights on the overall cycle structure of the toppled digraph. Observe that some directed cycles in a toppled digraph    may involve more than one edge from the corresponding toppling sequence.
\end{example}

It seems interesting to investigate the cycle structure of the toppled digraphs associated to a given $r$-tree $\mathcal{T}$, and we propose the following related query.

\begin{conj}
Let $\mathcal{T}$ be an $r$-tree with maximum degree $\Delta$ and with $r\geq 3$. Then some toppled digraph of $\mathcal{T}$ contains cycles of length $kr$, for any $k=1, \ldots, \Delta$.
\end{conj}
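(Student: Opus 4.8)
The plan is to exploit the dictionary set up in Section~3: by the Observation together with Lemmas~\ref{lem_basis_kernelspace} and~\ref{lem_sametimestoppled} and Proposition~\ref{coro_lengthmupltiple}, a directed cycle of length $L$ in a toppled digraph $\mathfrak{D}$ of an $r$-tree corresponds to a recurrent configuration $\alpha$ of total degree $d=(r-2)(n+1)+1$ whose closed toppling sequence has representative vector $z=\sum_i c_i\,j_{e_i}$ for nonnegative integers $c_i$, where $L=r\sum_i c_i$ and the $L$ monomials visited are pairwise distinct. So to produce a cycle of length $kr$ it suffices to exhibit, with $\sum_i c_i=k$, a configuration of degree $d$ from which such a legal sequence exists, closes up, and passes through $kr$ distinct monomials. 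I would fix a vertex $v$ of maximum degree $\Delta$, build every cycle around the $\Delta$ edges incident with $v$, and choose the vertex ordering (hence which toppled digraph $\mathfrak{D}$ we work in) so that $v$ is the root and the non-$v$ vertices of those edges are ``sweepable''.

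For $k=1$ this can be done cleanly and for every $r$-tree. Take one edge $e=\{v,u_1,\dots,u_{r-1}\}$, order it $v\prec u_1\prec\cdots\prec u_{r-1}$, and let $\alpha$ assign $j-1$ chips to $u_j$, the number $M=d-\binom{r-1}{2}$ to $v$, and $0$ to every remaining vertex of $\mathcal{T}$. Since $\mathcal{T}$ is linear, the $\Delta$ edges at $v$ contribute $\Delta(r-1)$ distinct leaves, so $n\ge\Delta(r-1)$, and from $d=(r-2)(n+1)+1$ one checks $M\ge r-1$; hence in $\alpha$ the vertex $v$ is the unique unstable vertex, while every other vertex stays stable and frozen at $0$. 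The forced legal sequence ``topple $v$ inside $e$, then $u_{r-1},u_{r-2},\dots,u_1$'' has length $r$, restores $\alpha$, and the multisets of exponents on $e$ along the way are all distinct, so the $r$ configurations are distinct: this is a genuine $r$-cycle in the toppled digraph of $\mathcal{T}$ rooted at $v$.

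For general $k$ the natural idea is to chain $k$ copies of this block, one per edge $f_1,\dots,f_k$ at $v$, performing ``rounds'': in round $j$, topple $v$ inside $f_j$ and then the non-$v$ vertices of $f_j$ from largest to smallest ($r$ topplings each round). Every round is legal, topples each incidence of $f_j$ exactly once, and, because in a linear hypergraph the $f_i$ pairwise meet only in $v$, round $j$ changes only $v$ and the vertices of $f_j$; so after $k$ rounds the configuration is restored with representative vector $j_{f_1}+\cdots+j_{f_k}$, giving $kr$ topplings. The catch — and the heart of the problem — is that each round is net-zero on $f_j$ and on $v$, so it returns the configuration to the \emph{same} junction state $\alpha$: this closed walk is a ``bouquet'' of $k$ edge-disjoint $r$-cycles through a common vertex, not a simple $kr$-cycle. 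To get a genuine $kr$-cycle one must force the cascade ignited in one edge to spill forward into the next, so the configuration at the start of round $j+1$ is genuinely new. My plan is to do this by replacing the star at $v$ with a connected subtree $\mathcal{T}'\sqsubseteq\mathcal{T}$ having $k$ edges $e_1,\dots,e_k$ sharing successive vertices $x_1,\dots,x_{k-1}$, and arranging the ordering so that a single ``wave'' of topplings sweeps from $e_1$ to $e_k$: toppling the leaves of $e_i$ pushes exactly enough chips onto $x_i\in e_{i+1}$ to ignite $e_{i+1}$ before the configuration can revisit a previous state, the wave closing after precisely $kr$ steps.

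The main obstacle is precisely the distinctness of the $kr$ configurations, i.e.\ ruling out this bouquet/period-collapse phenomenon while keeping the sequence legal (always toppling the largest unstable vertex) and periodic. I expect the propagating-wave construction to work when $\mathcal{T}$ contains a path with $\Delta$ edges, and I would settle that case first. The residual difficulty is that not every $r$-tree of maximum degree $\Delta$ contains such a path — the extreme case being the star $K_{1,\Delta}^{(r)}$, where the edges at $v$ intersect only in $v$, no cascade can propagate, and it is not clear to me that any cycle of length exceeding $r$ exists at all. Finishing the conjecture therefore seems to require either a genuinely different family of recurrent configurations for star-like trees or a strengthening of the hypothesis (e.g.\ replacing $\Delta$ by the maximum number of edges in a path of $\mathcal{T}$); I would flag this as the crux and, absent a construction, revisit whether the statement needs to be weakened accordingly.
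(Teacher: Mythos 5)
This statement is posed in the paper as a \emph{conjecture} and is left open there: the authors offer only Example~\ref{exam1} (Figure~1) as evidence and an informal remark about ``hops'' as intuition, so there is no proof of theirs to measure yours against, and your attempt has to stand on its own. As a partial attempt it is honest and partly correct. Your $k=1$ block is sound --- it is essentially Claim~5 in the proof of Lemma~\ref{lem_representative-digraph}, where each edge of $\mathcal{T}$ is shown to determine a directed $r$-cycle through a critical configuration --- and you have correctly isolated the true obstruction for $k\ge 2$: since $Bj_e=0$, any closed toppling sequence in which all $r$ incidences of one edge occur consecutively returns to its starting configuration at the end of that block, so concatenating blocks yields a bouquet of $r$-cycles through a common vertex of the toppled digraph rather than a single $kr$-cycle. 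But you do not resolve this, so the conjecture remains unproved by your argument; the gap is the absence of any construction, for $k\ge 2$, of a legal toppling sequence whose $kr$ intermediate configurations are pairwise distinct.

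Two of your strategic choices point away from where the answer most plausibly lies. First, your fallback --- routing the long cycle along a path of $k$ edges and, failing that, weakening $\Delta$ to the longest-path length --- is contradicted by the paper's own Example~\ref{exam1}: the hypertree there is a $3$-uniform hyperstar whose edges meet only in the centre, and Figure~1 exhibits cycles of length $3k$ for every $k=1,\dots,4=\Delta$ in one of its toppled digraphs. The star is not the obstruction; it is the motivating case, and the appearance of $\Delta$ (rather than diameter) in the conjecture indicates that the intended construction lives on the star at a maximum-degree vertex. Second, Lemma~\ref{cyclebyedge} shows that under any \emph{good} ordering every directed cycle has length exactly $r$; hence for $k\ge 2$ the ordering must be non-good, and a ``wave sweeping outward from $e_1$ to $e_k$'' ordered by distance from the root is precisely a good ordering, under which your wave provably cannot close into anything longer than $r$. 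The mechanism suggested by the example is different: with a suitable non-good ordering the legal-toppling rule forces the shared vertex to topple inside $e_2$ \emph{before} the cascade in $e_1$ has completed, so the $\Delta$ edge-cycles interleave rather than concatenate, and the partial sums $Bz$ stay nonzero until all $kr$ incidences have fired. By Lemma~\ref{lem_sametimestoppled}, interleaving is the only way to escape period collapse, so engineering such an ordering and initial configuration is the missing idea; your proposal never produces a single interleaved sequence.
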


From Lemma~\ref{lem_sametimestoppled}, we observe that long cycles (i.e. length of $kr$ for $k>1$) of a toppled digraph of an $r$-tree must involve several edges. From Example~\ref{exam1},  it appears that long cycles involve toppling in which  hops (via a common vertex) occur among different edges. This observation serves as part of our intuition in forming the above conjecture.

\section{Main results}

For  a graph (or hypergraph) $H=(V(H), E(H))$, we simply write $v\in H$ instead of $v\in V(H)$ to mean that $v$ is a vertex of $H$, and use $|H|:=|V(H)|$ and $e(H):=|E(H)|$ to denote the order and size of   $H$, respectively.

Let  $\mathcal{H}_1$ and  $\mathcal{H}_2$  be two $r$-graphs with $v_1\in  \mathcal{H}_1$ and $v_2\in \mathcal{H}_2$.  We use $\mathcal{H}_1\circ\mathcal{H}_2$ to denote the $r$-graph obtained by identifying $v_1$ and $v_2$, called the \textit{coalescence} of $\mathcal{H}_1$ and  $\mathcal{H}_2$. The identified vertex in $\mathcal{H}_1\circ\mathcal{H}_2$ is called the \textit{coalesced vertex}. Note that $E(\mathcal{H}_1\circ\mathcal{H}_2)=E(\mathcal{H}_1)\cup E(\mathcal{H}_2)$ and $V(\mathcal{H}_1\circ\mathcal{H}_2)=(V(\mathcal{H}_1)\setminus\{v_1\})\cup (V(\mathcal{H}_2)\setminus\{v_2\}\cup \{\mathrm{the\,\, coalesced \,\, vertex}\}$.

\begin{lemma}\label{cutvertex}
Let  $\mathcal{H}$   be an $r$-graph   which is the coalescence of two $r$-graphs $\mathcal{H}_1$ and  $\mathcal{H}_2$,   with $v$ as the coalesced vertex. Let $\alpha$ and $\beta$ be two configurations on $\mathcal{H}$ such that $\alpha(u)<r-1$ for any $u\in \mathcal{H}_2\setminus\{v\}$ and $\beta(u)\geq r-1$ for some $u\in \mathcal{H}_2\setminus\{v\}$. If $\beta$ is reachable from $\alpha$, then in the toppling process from $\alpha$ to $\beta$ no vertex of $\mathcal{H}_2\setminus\{v\}$ can be toppled before $v$ is toppled inside some edge of $\mathcal{H}_2$.
\end{lemma}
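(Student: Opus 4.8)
The plan is to argue by contradiction, tracking the first moment at which some vertex of $\mathcal{H}_2\setminus\{v\}$ is toppled. Suppose, for a legal toppling sequence $\mathcal{Z}$ taking $\alpha$ to $\beta$, that some vertex $w\in\mathcal{H}_2\setminus\{v\}$ is toppled before $v$ is ever toppled inside an edge of $\mathcal{H}_2$. Consider the first such toppling in $\mathcal{Z}$, say of a vertex $w\in\mathcal{H}_2\setminus\{v\}$, and let $\gamma$ be the configuration immediately before that step. The key observation is that up to this moment, the only topplings that have taken place either lie entirely in $\mathcal{H}_1$ (i.e. topple a vertex inside an edge of $\mathcal{H}_1$), or topple $v$ itself inside an edge of $\mathcal{H}_1$ — by hypothesis $v$ has not yet been toppled inside an edge of $\mathcal{H}_2$, and no earlier vertex of $\mathcal{H}_2\setminus\{v\}$ has fired. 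Since $v$ is a cut vertex separating $\mathcal{H}_2\setminus\{v\}$ from $\mathcal{H}_1\setminus\{v\}$, none of these topplings sends any chips into $\mathcal{H}_2\setminus\{v\}$: a toppling inside an edge $e\subseteq\mathcal{H}_1$ only moves chips among vertices of $e\subseteq V(\mathcal{H}_1)$. Hence the restriction of $\gamma$ to $\mathcal{H}_2\setminus\{v\}$ equals the restriction of $\alpha$ to $\mathcal{H}_2\setminus\{v\}$, so in particular $\gamma(u)=\alpha(u)<r-1$ for every $u\in\mathcal{H}_2\setminus\{v\}$, and in particular $\gamma(w)<r-1$.

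This immediately contradicts legality of the step that topples $w$: for $w$ to be toppled (inside any edge incident to $w$), the toppling rule requires $w$ to be unstable, i.e. $\gamma(w)\ge r-1$. Therefore no such $w$ and no such step can exist, which proves that in the toppling sequence from $\alpha$ to $\beta$ no vertex of $\mathcal{H}_2\setminus\{v\}$ is toppled before $v$ is toppled inside some edge of $\mathcal{H}_2$.

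The one point that needs to be made carefully — and which I expect is the only real obstacle — is the claim that, prior to the first toppling of a vertex of $\mathcal{H}_2\setminus\{v\}$, no chips have entered $\mathcal{H}_2\setminus\{v\}$. This uses that $\mathcal{H}=\mathcal{H}_1\circ\mathcal{H}_2$ is a coalescence, so $E(\mathcal{H})=E(\mathcal{H}_1)\cup E(\mathcal{H}_2)$ and every edge lies entirely in $V(\mathcal{H}_1)$ or entirely in $V(\mathcal{H}_2)$, with $V(\mathcal{H}_1)\cap V(\mathcal{H}_2)=\{v\}$. A chip can reach a vertex $u\in\mathcal{H}_2\setminus\{v\}$ only by toppling some vertex inside an edge $e$ with $u\in e$; such an edge satisfies $e\subseteq V(\mathcal{H}_2)$, and $e$ contains at least one vertex of $\mathcal{H}_2\setminus\{v\}$ besides possibly $u$, so the toppled vertex is either in $\mathcal{H}_2\setminus\{v\}$ (excluded, since $w$ was the first such) or is $v$ toppled inside $e\subseteq\mathcal{H}_2$ (excluded by hypothesis). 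Thus no chips enter $\mathcal{H}_2\setminus\{v\}$ before the step in question, completing the argument. One should also note that the hypothesis $\beta(u)\ge r-1$ for some $u\in\mathcal{H}_2\setminus\{v\}$ is what guarantees the sequence $\mathcal{Z}$ is nonempty and forces at least one toppling affecting $\mathcal{H}_2$, so the statement is not vacuous; but the contradiction itself only needs the structure of $\mathcal{Z}$ up to the hypothetical first bad step.
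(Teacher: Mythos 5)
Your proof is correct and takes essentially the same approach as the paper: both arguments exploit the coalescence structure (every edge lies wholly in $\mathcal{H}_1$ or wholly in $\mathcal{H}_2$, meeting only at $v$) to conclude that no chips can enter $\mathcal{H}_2\setminus\{v\}$ until $v$ topples inside an edge of $\mathcal{H}_2$, and then derive a contradiction. The only cosmetic difference is bookkeeping: the paper truncates the sequence at the first configuration where some $u\in\mathcal{H}_2\setminus\{v\}$ becomes unstable and computes $\beta(u)$ via the representative vector $z$, whereas you localize at the first illegal toppling of a vertex $w\in\mathcal{H}_2\setminus\{v\}$ and contradict the stability requirement directly.
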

\begin{proof}
Assume that the toppling sequence from  $\alpha$ to $\beta$ is
\begin{equation*}
  \beta_0 \stackrel{(i_1,e_1)}{\longrightarrow}  \beta_1\stackrel{(i_2,e_2)}{\longrightarrow} \beta_2\rightarrow\cdots     \stackrel{(i_{k-1},e_{k-1})}{\longrightarrow}   \beta_{k-1}\stackrel{(i_{k},e_{k})}{\longrightarrow} \beta_k,
 \end{equation*}
 where $\beta_0=\alpha, \beta_k=\beta$.

 Without loss of generality, suppose that $\beta_j(a)<r-1$ for any $a\in \mathcal{H}_2\setminus\{v\}$ and for any $j=0, 1, \ldots, k-1$, and $\beta_k(u)\geq r-1$ for some $u\in \mathcal{H}_2\setminus\{v\}$. Let $z$ be the representative vector of the toppling sequence $\{(i_1,e_1), (i_2,e_2), \ldots,     (i_{k},e_{k})\}$.
 Then
 \begin{align*}
 \beta(u)=\alpha(u)-(r-1)\sum_{e\in E_u}z(u,e)+\sum_{f\in E_u, u\neq w\in f}z(w, f).
 \end{align*}
 Since $\beta(u)\geq r-1>\alpha(u)$, there must exist an incidence $(w, f)$ satisfying $z(w, f)>0$, where $f\in E_u$ and $u\neq w\in f$.
 By our assumption on $\beta_0,  \beta_1, \ldots,   \beta_{k-1}$, we know that all vertices of $\mathcal{H}_2$ other than $v$ are stable with respect to $\beta_0,  \beta_1, \ldots,   \beta_{k-1}$ and so  there must exist an incidence $(v, f)$ satisfying  $\{v, u\}\subseteq f$ and $z(v, f)>0$, which completes the proof.
\end{proof}

Let $\mathcal{T}$ be an $r$-tree of order  $n+1$. Choose any given vertex $v$ of $\mathcal{T}$ as the root,
  and relabel it as 0. Consider the distance partition from the root in  $\mathcal{T}$, and relabel the  remaining vertices as $1,\ldots, n$ according to the following rule:
\begin{itemize}
  \item  $\mathrm{dist}(0, i)<\mathrm{dist}(0, j)\Rightarrow i<j$;
   \item for each edge $e=\{i_1, \ldots, i_r\}$, there exists only one vertex (called the root of   $e$), say $i_1$, of distance from the root 0 one less  than the other vertices in $e$, then $i_2, \ldots, i_r$ is a set of consecutive integers.
\end{itemize}
We adopt the ordering on vertices such that the variables can be ordered as  $x_n>x_{n-1}>\cdots>x_0$, and  call such  ordering a \textit{good ordering} of variables (or vertices) for hypertree $\mathcal{T}$. Then we use $\mathfrak{D}$ denote the toppled digraph with designated root understood from context. Moving forward, we use  $S_i$ to denote the set
\[
 S_i=\{x^{\alpha}: |\alpha|=d,\,\, x_i^{r-1}    \,\, \mathrm{divides}\,\, x^{\alpha}, \,\, \textrm{but}\,\, x_j^{r-1}\,\,\mathrm{does\,\, not}, \forall j>i\},
\]
for $i=0, 1, \ldots, n$. Note that this definition coincides with \eqref{Si} although the ordering has changed.

 For an edge $e$ of $\mathcal{T}$, the \textit{level} of $e$  is defined to be the distance from the root of $e$ to the root of $\mathcal{T}$ and is denoted by $\mathfrak{L}(e)$. For a vertex $v$ and an edge $e$, $e$ is said to be  an \textit{upper edge} or a \textit{lower edge}  for $v$ according to whether the level of $e$ less than   $\mathrm{dist}(0, v)$ or not, respectively.

Let $\mathcal{T}$ be an $r$-tree on the vertex set $[n]_0$  with a good ordering. For each edge $e$ with its root $v$, the incidence $(v, e)$ is called   the \textit{principal incidence determined by} $e$ and every  $(w, e)$ with $w\neq v$  a \textit{non-principal incidence  determined by} $e$. Let $\mathcal{Z}$ be any  toppling sequence in which a principal incidence     $(v, e)$ occurs.  If some non-principal incidence  determined by $e$ occurs in $\mathcal{Z}$ after this  $(v, e)$, then there is a unique one, say $(u, e)$, such that the subsequence of $\mathcal{Z}$  from $(v, e)$ to $(u, e)$, denoted by $\mathcal{Z}_1$,  satisfies that
\begin{itemize}
  \item after $\mathcal{Z}_1=\{(v, e), \ldots, (u, e)\}$ in $\mathcal{Z}$, either no incidence determined by $e$ occurs,  or an incidence  determined by $e$ does occur and  the first such  incidence  is another  $(v, e)$;
  \item except for the first incidence $(v, e)$, no other $(v, e)$ occurs  in $\mathcal{Z}_1$.
\end{itemize}
Otherwise, set $\mathcal{Z}_1=\{(v, e)\}$.  Such   a subsequence $\mathcal{Z}_1$, denoted by $\mathcal{Z}(v ,e)$,  is called a \textit{proper toppling sequence} of $(v, e)$ in $\mathcal{Z}$.

Let $\mathcal{T}$ be an $r$-tree on the vertex set $[n]_0$  with a good ordering. For each edge $e$ with its root $v$,  let $\mathcal{T}_e$ be the component of $\mathcal{T}-(E_v\setminus\{e\})$ containing the edge $e$, and let $\mathcal{T}_e'$ be the component of $\mathcal{T}-e$ containing vertex  $v$.

\begin{coro} \label{prinfirst}
Let $\mathcal{Z}$ be any given   toppling sequence on $\mathcal{T}$ starting from a principal incidence $(v, e)$. If an incidence $(u, f)\in \mathcal{Z}$ and the level of $e$ is no more than that of $f$,   and no other incidence determined by $f$ occurs before $(u, f)$  in $\mathcal{Z}$,  then $(u, f)$ must be a principal incidence.
\end{coro}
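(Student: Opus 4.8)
The plan is to argue by induction on the position of $(u,f)$ within $\mathcal{Z}$, tracking where chips that reach a vertex of $f$ can originate. First I would set up the key observation that governs the whole argument: in a good ordering, whenever a principal incidence $(v,e)$ is toppled, the root $v$ of $e$ has strictly smaller distance from $0$ than the other vertices of $e$, and $e$ is the unique upper edge for every non-root vertex of $e$. Consequently, a non-root vertex $w$ of $f$ can only become unstable (accumulate $r-1$ chips) if chips have flowed into $w$ along $f$ itself from the root of $f$, or along some edge incident to $w$ of strictly larger level than $f$. I would make this precise using the configuration-update formula $\beta(u)=\alpha(u)-(r-1)\sum_{e\in E_u}z(u,e)+\sum_{f\in E_u,\,u\ne w\in f}z(w,f)$ together with Lemma~\ref{cutvertex}, applied to the coalescence decomposition of $\mathcal{T}$ at the root of $f$.

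The core of the argument is the following dichotomy. Suppose, for contradiction, that $(u,f)$ is the first incidence determined by $f$ occurring in $\mathcal{Z}$ and that it is \emph{non-principal}, i.e., $u$ is not the root of $f$. Since the toppling sequence begins at the principal incidence $(v,e)$ and all configurations are reached only by legal topplings, the initial configuration $\alpha$ must have had $\alpha(u)<r-1$ (this uses that in a good ordering the starting configuration before $(v,e)$ has all non-root vertices of $e$, and more generally all vertices of level $\ge \mathfrak{L}(e)$, stable — here is where the hypothesis $\mathfrak{L}(e)\le\mathfrak{L}(f)$ enters). For $u$ to be toppled, it must have reached $\ge r-1$ chips, so by the update formula some incidence $(w,g)$ with $g\in E_u$, $w\ne u\in g$, and $z(w,g)>0$ must have occurred before $(u,f)$. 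The edge $g$ is incident to $u$; I would split into the cases $g=f$ and $g\ne f$. If $g=f$, then an incidence determined by $f$ — namely $(w,f)$ — occurred strictly before $(u,f)$, contradicting minimality of $(u,f)$. If $g\ne f$, then $g$ is a lower edge for $u$ (its root has distance $\mathrm{dist}(0,u)-1$, smaller than that of $u$), hence $\mathfrak{L}(g)<\mathrm{dist}(0,u)\le \mathfrak{L}(f)$ would put $g$ at a \emph{strictly smaller} level than $f$; but walking back through the ancestor chain of $f$ toward the root, each such feeding edge has strictly smaller level, and iterating this finitely many times forces an incidence to have occurred before the first incidence of $\mathcal{Z}$ itself — impossible. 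Here I would phrase the iteration cleanly using Lemma~\ref{cutvertex}: cutting $\mathcal{T}$ at the root $r_f$ of $f$ into the part $\mathcal{T}'_f$ containing $r_f$ and the part $\mathcal{T}_f$ containing $f$, Lemma~\ref{cutvertex} says no vertex of $\mathcal{T}_f\setminus\{r_f\}$ (which includes $u$) can be toppled before $r_f$ is toppled inside some edge of $\mathcal{T}_f$; since $f$ is the unique edge of $\mathcal{T}_f$ incident to $r_f$ of minimal level, $r_f$ must first be toppled inside $f$, i.e. $(r_f,f)$ — the principal incidence of $f$ — occurs before $(u,f)$, the desired contradiction.

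The step I expect to be the main obstacle is justifying cleanly that in a good ordering the starting configuration of $\mathcal{Z}$ has every vertex of level $\ge\mathfrak{L}(e)$ stable, and more generally controlling which vertices can be unstable \emph{before} any incidence determined by $f$ has fired; this is where the combinatorics of the good ordering (the ``consecutive integers within each edge'' clause and the distance-monotonicity clause) must be used carefully, because a priori some vertex of high distance but on a different branch could be unstable and could have fed chips toward $f$ through a common ancestor. The good-ordering hypothesis rules this out because any such feeding path must pass through $r_f$ by the tree structure, and $r_f$ lies at a strictly smaller level than $f$, so Lemma~\ref{cutvertex} again forecloses it. I would therefore organize the whole proof as: (i) reduce to showing $(r_f,f)$ precedes the first $f$-incidence; (ii) apply Lemma~\ref{cutvertex} at the cut vertex $r_f$; (iii) observe $f$ is the unique lowest-level edge of $\mathcal{T}_f$ through $r_f$, so being ``toppled inside an edge of $\mathcal{T}_f$'' forces ``toppled inside $f$''. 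Steps (ii) and (iii) are short; step (i) together with verifying the stability hypothesis of Lemma~\ref{cutvertex} holds along the way is the delicate part and will consume most of the write-up.
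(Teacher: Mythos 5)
Your proposal is correct and follows essentially the same route as the paper: cut $\mathcal{T}$ at the root $w$ of $f$ into $\mathcal{T}_f$ and $\mathcal{T}_f'$, use $\alpha\in S_v$ together with $\mathfrak{L}(e)\le\mathfrak{L}(f)$ to verify that every vertex of $\mathcal{T}_f\setminus\{w\}$ is stable in the starting configuration, and then invoke Lemma~\ref{cutvertex} to force the principal incidence $(w,f)$ to precede $(u,f)$, contradicting the hypothesis. The only simplification you are missing is that step (iii) is immediate by definition --- $\mathcal{T}_f$ is a component of $\mathcal{T}-(E_w\setminus\{f\})$, so $f$ is the \emph{only} edge of $\mathcal{T}_f$ incident to $w$, and the ``delicate'' stability verification you anticipate is in fact a one-line consequence of $\alpha\in S_v$.
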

\begin{proof}
 Assume that $w$ is the root of $f$.    Observe then that $\mathcal{T}$ is the coalescence of $\mathcal{T}_f$ and $\mathcal{T}_f'$, and the coalesced vertex is $w$. Furthermore, $f$ is an edge of $\mathcal{T}_f$  whose deletion from $\mathcal{T}$ results in $\mathcal{T}_f'$.

 Assume that $\alpha$ and $\beta$ are the configurations that $(v, e)$ and $(u, f)$ follow in $\mathcal{Z}$, respectively.
 Since the level of $e$ is no more than that of $f$, $v< a$ for any $a\in \mathcal{T}_f\setminus\{w\}$, together with $\alpha\in S_v$ implies that
\begin{equation}\label{alphaTf}
\alpha(a)<r-1, \,\,  \forall  a\in \mathcal{T}_f\setminus\{w\}.
\end{equation}

   Suppose by contradiction that $u\neq w$. Noting $\beta(u)\geq r-1$ and using \eqref{alphaTf}, by Lemma~\ref{cutvertex}, in the toppling sequence $\mathcal{Z}$ before $(u, f)$, $w$ must be toppled inside some edge of  $\mathcal{T}_f$, which must be $f$ because it is the only edge of $\mathcal{T}_f$ incident to $w$. This contradicts the assumption that no other incidence determined by $f$ occurs before $(u, f)$  in $\mathcal{Z}$.
\end{proof}

For a proper toppling sequence $\mathcal{Z}(v ,e)$, let $\widehat{\mathcal{Z}}(v ,e)$ be the set of all principal incidences determined by an edge of  $\mathcal{T}_e$   in $\mathcal{Z}(v ,e)$.
 The \textit{depth} of a proper toppling sequence $\mathcal{Z}(v ,e)$  is defined to be the maximum distance in $\mathcal{T}$ between $v$ and   $w$ with $(w, f)\in \widehat{\mathcal{Z}}(v ,e)$ for some edge $f\in E(\mathcal{T}_e)$.

\begin{lemma}\label{lem_properseq}
Let $\mathcal{T}$ be an $r$-tree on the vertex set $[n]_0$ with a good ordering, and $\mathcal{Z}$ any toppling sequence on $\mathcal{T}$. For any  principal incidence $(v, e)\in\mathcal{Z}$, let  $\mathcal{Z}(v ,e)$ be any fixed proper toppling sequence of $(v, e)$ in   $\mathcal{Z}$. If $\widehat{\mathcal{Z}}(v ,e)\setminus\{(v ,e)\}\neq\emptyset$, then for any  $(u, f)\in \widehat{\mathcal{Z}}(v ,e)\setminus\{(v ,e)\}$,     the depth of  $\mathcal{Z}(v ,e)$  is greater than that of  $\mathcal{Z}(u ,f)$.
\end{lemma}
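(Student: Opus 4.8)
The plan is to argue by a kind of nested-interval / contiguity analysis on the toppling sequence $\mathcal{Z}(v,e)$, exploiting the cut-vertex structure of $\mathcal{T}$ at the root of each relevant edge together with Lemma~\ref{cutvertex} and Corollary~\ref{prinfirst}. Fix $(u,f)\in\widehat{\mathcal{Z}}(v,e)\setminus\{(v,e)\}$. First I would observe that, by definition of $\widehat{\mathcal{Z}}(v,e)$, the edge $f$ lies in $\mathcal{T}_e$ and $(u,f)$ is the principal incidence of $f$, so $u$ is the root of $f$; moreover $f\neq e$. Since $\mathcal{T}_e$ is a subtree rooted (in the induced sense) at $v$, and $f$ is an edge of $\mathcal{T}_e$ other than $e$, the level of $f$ is strictly greater than the level of $e$, and in particular $\mathrm{dist}_{\mathcal{T}}(v,u)\geq 1$. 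The key structural fact is that $\mathcal{T}$ is the coalescence of $\mathcal{T}_f$ and $\mathcal{T}_f'$ at the vertex $u$; hence any principal incidence $(w,g)$ with $g\in E(\mathcal{T}_f)$ that occurs in $\mathcal{Z}(v,e)$ after $(u,f)$ has its root $w$ lying in $\mathcal{T}_f$, so $\mathrm{dist}_{\mathcal{T}}(v,w)=\mathrm{dist}_{\mathcal{T}}(v,u)+\mathrm{dist}_{\mathcal{T}_f}(u,w)$.

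Next I would show that $\widehat{\mathcal{Z}}(u,f)$ — for the proper toppling sequence of $(u,f)$ determined inside $\mathcal{Z}$ — is, up to the distance bookkeeping, exactly the set of principal incidences of $\mathcal{T}_f$-edges appearing in the relevant stretch of $\mathcal{Z}(v,e)$, and crucially that $v\notin V(\mathcal{T}_f)$, i.e. $(v,e)\notin\widehat{\mathcal{Z}}(u,f)$. This last point is where the depth drops: the depth of $\mathcal{Z}(v,e)$ is realized by some $(w,g)\in\widehat{\mathcal{Z}}(v,e)$ with $g\in E(\mathcal{T}_e)$ at maximal distance $\mathrm{dist}_{\mathcal{T}}(v,w)$ from $v$; if that maximizing $w$ happens to lie in $\mathcal{T}_f$, then it is also counted by $\mathcal{Z}(u,f)$ but at the strictly smaller radius $\mathrm{dist}_{\mathcal{T}_f}(u,w)=\mathrm{dist}_{\mathcal{T}}(v,w)-\mathrm{dist}_{\mathcal{T}}(v,u)<\mathrm{dist}_{\mathcal{T}}(v,w)$, so $\mathrm{depth}(\mathcal{Z}(u,f))<\mathrm{depth}(\mathcal{Z}(v,e))$; and if the maximizing $w$ does not lie in $\mathcal{T}_f$, then every element of $\widehat{\mathcal{Z}}(u,f)$ is a principal incidence of a $\mathcal{T}_f$-edge and these all sit at distance $\le \mathrm{dist}_{\mathcal{T}}(v,w)-1$ from $v$ (being separated from $v$ by $u$), hence again the depth of $\mathcal{Z}(u,f)$, measured from $u$, is strictly smaller.

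To make the second step rigorous I need to control \emph{which} principal incidences can appear in $\mathcal{Z}(u,f)$, and this is where Corollary~\ref{prinfirst} and Lemma~\ref{cutvertex} do the work: any incidence $(w,g)$ occurring in $\mathcal{Z}(u,f)$ whose edge $g$ has level at least that of $f$, and which is the first incidence of $g$ to occur, must be principal by Corollary~\ref{prinfirst}; and by Lemma~\ref{cutvertex}, applied at the cut vertex $u$ with $\mathcal{H}_2=\mathcal{T}_f$, no vertex of $\mathcal{T}_f\setminus\{u\}$ can become unstable (hence be toppled) before $u$ is toppled inside $f$, which pins the start of $\mathcal{Z}(u,f)$ and guarantees that the incidences of $\mathcal{T}_f'$-edges at or below the level of $f$ do not intrude into the sub-window that defines $\widehat{\mathcal{Z}}(u,f)$. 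I expect the main obstacle to be precisely the bookkeeping that the proper toppling subsequence $\mathcal{Z}(u,f)$ selected inside $\mathcal{Z}$ — rather than inside $\mathcal{Z}(v,e)$ — really is a sub-window of $\mathcal{Z}(v,e)$, i.e. that it neither starts before $(v,e)$ nor runs past the terminating occurrence that closes $\mathcal{Z}(v,e)$; this requires using the defining properties of proper toppling sequences (no repeat of the opening principal incidence, and the window closes at the next occurrence of that same principal incidence) together with the level comparison $\mathfrak{L}(f)>\mathfrak{L}(e)$ to rule out $e$-incidences interleaving. Once that containment is established, the distance identity $\mathrm{dist}_{\mathcal{T}}(v,\cdot)=\mathrm{dist}_{\mathcal{T}}(v,u)+\mathrm{dist}_{\mathcal{T}_f}(u,\cdot)$ on $V(\mathcal{T}_f)$ finishes the proof, since $\mathrm{dist}_{\mathcal{T}}(v,u)\ge 1$.
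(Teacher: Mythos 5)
Your overall strategy is the same as the paper's: reduce everything to the claim that the proper toppling sequence $\mathcal{Z}(u,f)$, computed inside the ambient sequence $\mathcal{Z}$, is actually a sub-window of $\mathcal{Z}(v,e)$, and then get the depth drop from the distance identity $\dist_{\mathcal{T}}(v,w)=\dist_{\mathcal{T}}(v,u)+\dist_{\mathcal{T}}(u,w)$ for $w\in V(\mathcal{T}_f)$ together with $\dist_{\mathcal{T}}(v,u)\geq 1$. The problem is that the containment claim is the \emph{entire} mathematical content of the lemma, and you do not prove it: you explicitly flag it as ``the main obstacle'' and list the tools you would use, but you never discharge it. The paper's proof consists of exactly this step: it sets $\mathcal{Z}_1=\mathcal{Z}(v,e)=\{(v,e),\ldots,(u,f),\ldots,(w,e)\}$, notes that $\mathcal{Z}(u,f)\supseteq\mathcal{Z}_1(u,f)$ trivially, and then supposes for contradiction that $\mathcal{Z}(u,f)$ runs past the terminal incidence $(w,e)$ to a later non-principal incidence $(u',f)$; since no further $(u,f)$ occurs inside the proper toppling sequence $\mathcal{Z}(u,f)$ and $\mathfrak{L}(e)<\mathfrak{L}(f)$, this produces a first occurrence of an $f$-incidence after a principal $(v,e)$ which is non-principal, contradicting Corollary~\ref{prinfirst}. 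Without some version of this argument your proposal is a plan, not a proof; as written it would be circular to cite Lemma~\ref{lem_properseq}-adjacent facts to close it, and the ``ruling out of $e$-incidences interleaving'' you promise is precisely where the work lives.

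Two smaller points. First, your case split in the depth computation (``if the maximizing $w$ lies in $\mathcal{T}_f$\ldots, if not\ldots'') is both unnecessary and slightly wrong: an incidence of $\widehat{\mathcal{Z}}(v,e)$ with root in $\mathcal{T}_f$ need not belong to $\widehat{\mathcal{Z}}(u,f)$, so the maximizer is not ``also counted by $\mathcal{Z}(u,f)$.'' The clean statement is one-directional: once $\mathcal{Z}(u,f)\subseteq\mathcal{Z}(v,e)$ is known, every $(w,g)\in\widehat{\mathcal{Z}}(u,f)$ lies in $\widehat{\mathcal{Z}}(v,e)$ and has $w\in V(\mathcal{T}_f)$, whence $\dist(u,w)=\dist(v,w)-\dist(v,u)\leq \mathrm{depth}(\mathcal{Z}(v,e))-1$, and taking the maximum finishes the proof. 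Second, your preliminary observations ($u$ is the root of $f$, $u\neq v$, $\mathfrak{L}(f)>\mathfrak{L}(e)$, $v\notin V(\mathcal{T}_f)$, distance additivity through the cut vertex $u$) are all correct and match what the paper uses implicitly.
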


\begin{proof}
For convenience, let $\mathcal{Z}_1=\mathcal{Z}(v ,e)$ and assume that
\[
\mathcal{Z}_1=\{(v, e), \ldots, (u, f), \ldots  (w, e)\}.
\]
 Let $\mathcal{Z}_1(u, f)$ be the proper toppling sequence of $(u, f)$ in   $\mathcal{Z}_1$.
 First we show that $\mathcal{Z}(u, f)=\mathcal{Z}_1(u, f)$. Since $\mathcal{Z}(u, f)\supseteq\mathcal{Z}_1(u, f)$ holds trivially, it suffices to show that $\mathcal{Z}(u, f)\subseteq\mathcal{Z}_1(u, f)$.
By contradiction, suppose that $\mathcal{Z}(u, f)\nsubseteq \mathcal{Z}_1(u, f)$ and
\[
\mathcal{Z}(u, f)=\{(u, f), \ldots  (w, e), \ldots,    (u', f)\}.
\]
Let $\mathcal{Z}_2=\mathcal{Z}(u, f)$. Observe that the level of $e$ is less than that of $f$ by the form of $\widehat{\mathcal{Z}}(v ,e)$, and   no $(u, f)$ occurs between $(w, e)$  and  $(u', f)$ in $\mathcal{Z}_2$. This contradicts the conclusion of Corollary~\ref{prinfirst}.   Thus $\mathcal{Z}(u, f)=\mathcal{Z}_1(u, f)$. Consequently, the depth of  $\mathcal{Z}(v ,e)$  is greater than that of  $\mathcal{Z}(u ,f)$.
\end{proof}

\begin{lemma}\label{lem_toppletimes}
Let $\mathcal{T}$ be an $r$-tree on the vertex set $[n]_0$ with a good ordering. In any toppling sequence $\mathcal{Z}$ on $\mathcal{T}$, for any  principal incidence $(v, e)\in\mathcal{Z}$,  each non-principal   incidence  determined by  $e$ occurs at most once  in any proper toppling sequence of $(v, e)$ in $\mathcal{Z}$.
\end{lemma}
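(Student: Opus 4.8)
The plan is to proceed by strong induction on the \emph{depth} of the proper toppling sequence involved. Concretely, I would prove by induction on $D\ge 0$ the statement: for every $r$-tree $\mathcal T$ with a good ordering, every toppling sequence $\mathcal Z$ on $\mathcal T$, and every occurrence in $\mathcal Z$ of a principal incidence $(v,e)$ whose associated proper toppling sequence $\mathcal Z_1:=\mathcal Z(v,e)$ has depth at most $D$, each non-principal incidence determined by $e$ occurs at most once in $\mathcal Z_1$. Write $e=\{i_1,\dots,i_r\}$ with root $i_1=v$; by the good ordering $i_1<i_2<\dots<i_r$, the non-principal incidences of $e$ are $(i_2,e),\dots,(i_r,e)$, and each $i_j$ with $j\ge 2$ is a non-root vertex of exactly one edge of $\mathcal T$, namely $e$ (as $\mathcal T$ is a tree), and is the root of every other edge containing it; those edges are lower edges for $i_j$ and lie in $E(\mathcal T_e)$. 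Since $\mathcal T$ is linear, the only incidences that change the number of chips at $i_j$ are those determined by $e$ and those determined by the lower edges of $i_j$. Writing $c_j$ for the number of chips at $i_j$ immediately before the (unique) occurrence of $(v,e)$ in $\mathcal Z_1$, I then have, at every moment of $\mathcal Z_1$,
\[
(\text{chips at }i_j)=c_j+1+N_{-j}-(r-1)N_j+R_j,
\]
where $N_j$ and $N_{-j}$ count, so far, the occurrences of $(i_j,e)$ and of all $(i_k,e)$ with $k\ne j$, $2\le k\le r$, respectively, the $+1$ is the chip received by $i_j$ when $(v,e)$ is toppled, and $R_j$ is the net number of chips delivered to $i_j$ by incidences determined by its lower edges so far. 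Since $\mathcal Z_1$ begins with the toppling of $i_1$, at which moment $i_j>i_1$ must be stable, $c_j\le r-2$.

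The crux is to show $R_j\le 0$ throughout $\mathcal Z_1$. Fix a lower edge $f$ of $i_j$; its level equals $\mathrm{dist}(0,i_j)=\mathrm{dist}(0,v)+1$, which is larger than the level of $e$, and $\mathcal Z_1$ starts from the principal incidence $(v,e)$, so by Corollary~\ref{prinfirst} the first incidence determined by $f$ occurring in $\mathcal Z_1$ is principal, namely $(i_j,f)$. Hence the incidences determined by $f$ that appear in $\mathcal Z_1$ break up into consecutive blocks, each beginning with one occurrence of $(i_j,f)$ and otherwise containing only non-principal incidences of $f$; apart from a possible truncation at the very end of $\mathcal Z_1$, such a block is contained in the proper toppling sequence $\mathcal Z(i_j,f)$ of that occurrence, which, since $f\in E(\mathcal T_e)$ and $(i_j,f)\ne(v,e)$, lies in $\widehat{\mathcal Z}(v,e)\setminus\{(v,e)\}$ and therefore has depth strictly smaller than $D$ by Lemma~\ref{lem_properseq}. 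By the induction hypothesis each non-principal incidence of $f$ occurs at most once in $\mathcal Z(i_j,f)$, and $f$ has exactly $r-1$ non-principal incidences, so each block contributes at most $r-1$ of them. Thus at every moment the number of non-principal incidences of $f$ seen so far is at most $r-1$ times the number of occurrences of $(i_j,f)$ seen so far; as $f$ contributes $+1$ to $i_j$ for each of its non-principal incidences and $-(r-1)$ for each occurrence of $(i_j,f)$, its running contribution to $i_j$ is always $\le 0$. Summing over all lower edges $f$ of $i_j$ gives $R_j\le 0$. (When $D=0$ no incidence of any lower edge of $i_j$ occurs in $\mathcal Z_1$, so $R_j\equiv 0$ and the induction hypothesis is not invoked.)

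To finish, suppose toward a contradiction that some non-principal incidence of $e$ occurs at least twice in $\mathcal Z_1$. Among all entries of $\mathcal Z_1$ that constitute a second occurrence of some $(i_k,e)$ with $2\le k\le r$, let $\tau$ be the earliest, say the second occurrence of $(i_j,e)$. Immediately before $\tau$ the toppling rule forces $i_j$ to carry at least $r-1$ chips. On the other hand, at that moment $N_j=1$; by the minimality of $\tau$ each $(i_k,e)$ with $k\ne j$ has occurred at most once, so $N_{-j}\le r-2$; and $R_j\le 0$, $c_j\le r-2$ as above. The displayed identity then yields
\[
(\text{chips at }i_j)\le (r-2)+1+(r-2)-(r-1)+0=r-2<r-1,
\]
a contradiction. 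Hence every non-principal incidence of $e$ occurs at most once in $\mathcal Z_1$, which completes the induction and the proof.

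I expect the middle paragraph to be the main obstacle: it requires recognizing that the incidences of a lower edge $f$ of $i_j$ appearing in $\mathcal Z_1$ organize into proper toppling sequences of $(i_j,f)$ of strictly smaller depth, so that the induction hypothesis applies. This is exactly where Corollary~\ref{prinfirst} and Lemma~\ref{lem_properseq} enter, and it is the reason depth is the correct induction parameter. The remaining arithmetic with chip counts is routine.
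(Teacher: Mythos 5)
Your proof is correct and takes essentially the same route as the paper's: induction on the depth of the proper toppling sequence, with the crucial step being that the net contribution of the lower edges of a non-root vertex of $e$ to its chip count is nonpositive, derived from Corollary~\ref{prinfirst} and Lemma~\ref{lem_properseq} together with the induction hypothesis. Your block decomposition for a lower edge $f$ is exactly the paper's bound $z(w,f)\le z(u,f)$ for $u\neq w\in f$, and the concluding arithmetic at the first repeated non-principal incidence matches the paper's estimate $\beta(u)\le\alpha(u)<r-1$.
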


\begin{proof}
For any given principal incidence $(v, e)$  in $\mathcal{Z}$, let $\alpha$ be the configuration   $(v, e)$ follows in $\mathcal{Z}$. Then $\alpha(w)<r-1$ for any $w>v$. Let $L$ be the depth of this proper toppling sequence of $(v, e)$ in   $\mathcal{Z}$.

By induction on the depth $L$,  we show that for any principal incidence $(v, e)$ in $\mathcal{Z}$,  each non-principal  incidence  determined by  $e$   occurs at most once  in any proper toppling sequence of $(v, e)$ in $\mathcal{Z}$. If $L=0$, then each non-root vertex of $e$ cannot topple in a lower edge and can only topple inside $e$ following this proper toppling sequence.
To reach a contradiction, suppose that some non-principal  incidence, say $(u, e)$, occurs at least twice in a proper toppling sequence    $\mathcal{Z}(v, e)$. In particular, assume that
\begin{equation}\label{twicenonprin}
\mathcal{Z}(v, e)=\{(v, e),\ldots, \underbrace{(u, e)}_{ \textcircled{1}}, \ldots, (*, *),  \underbrace{(u, e)}_{\textcircled{2}}, \ldots, (v', e)\},
\end{equation}
and $\textcircled{2}$ is the first of the non-principal   incidences determined by $e$ which is a repeat in \eqref{twicenonprin}. Let $\beta$ be the configuration that the incidence $\textcircled{2}$ follows in \eqref{twicenonprin}.   Let $z$ be the representative vector of the subsequence $\{(v, e),\ldots, \underbrace{(u, e)}_{ \textcircled{1}}, \ldots,  (*, *)\}$ of $\mathcal{Z}(v, e)$ in \eqref{twicenonprin}. Then
\begin{align}\label{nonprin}
  \beta(u)&=\alpha(u)-(r-1)\sum_{f\in E_u}z(u, f)+\sum_{f\in E_u, u\neq w\in f}z(w, f).
\end{align}

Note that   $\sum_{f\in E_u}z(u, f)=1$ as $z(u, e)=1$ and  $z(u, f)=0$ for any $e\neq f\in E_u$.

For any $e\neq f\in E_u$,     $u$  is the root of $f$ and then $z(u, f)=0$ implies that $z(w, f)=0$ for any $u\neq w\in f$ by Corollary~\ref{prinfirst}. By assumption on the incidence  $\textcircled{2}$, we know that all incidences determined by $e$ in $\mathcal{Z}(v, e)$  before $\textcircled{2}$ are distinct and so $z(w, e)\leq 1$ for any $u\neq w\in e$.  Thus the third term of the right side in \eqref{nonprin} is equal to $\sum_{u\neq w\in e}z(w, e)\leq r-1$.

Therefore $\beta(u)=\alpha(u)-(r-1)+\sum_{u\neq w\in e}z(w, e)\leq \alpha(u)$. However,   $r-1\leq\beta(u)$ and $\alpha(u)<r-1$,  a contradiction.

Suppose that the conclusion holds for any  principal incidence $(v, e)$  in any proper toppling sequence of $(v, e)$ in $\mathcal{Z}$ with its depth less than  $L$. For the sake of a contradiction, assume that for a principal incidence $(v, e)$, there is a proper toppling sequence $\mathcal{Z}_1$  of $(v, e)$ in $\mathcal{Z}$ with depth $L$ such that some non-principal incidence, say $(u ,e)$,  occurs at least twice in $\mathcal{Z}_1$. That is, suppose
 \begin{equation}\label{nonprincipal}
\mathcal{Z}_1=\{(v, e),\ldots, \underbrace{(u, e)}_{ \textcircled{1}}, \ldots, (*, *),  \underbrace{(u, e)}_{\textcircled{2}}, \ldots, (v', e)\}
\end{equation}
  and $\textcircled{2}$ is the first of the non-principal   incidences determined by $e$ which is repeated in $\mathcal{Z}_1$.
  Assume that  $\beta$ is the configuration that the incidence $\textcircled{2}$  follows in $\mathcal{Z}_1$ in (\ref{nonprincipal}), and $z$ is the representative vector of the toppling  subsequence  $\{(v, e),\ldots, \underbrace{(u, e)}_{ \textcircled{1}}, \ldots, (*, *)\}$ in \eqref{nonprincipal}.

   If  $\deg(u)=1$ or $u$ is only toppled inside $e$ following this proper toppling sequence, then applying a similar argument as used to verify the base case for induction,  we have $r-1\leq \beta(u)=\alpha(u)-(r-1)z(u, e)+\sum_{u\neq w\in e}z(w, e) \leq \alpha(u)<r-1$, a contradiction.

Now suppose that $\deg(u)>1$ and $u$   is toppled in a lower edge for $u$  following this proper toppling sequence.  For any principal incidence $(u, f)\in \mathcal{Z}_1$, the depth of  $\mathcal{Z}(u, f)$  is less than $L$ by Lemma~\ref{lem_properseq}. By the induction hypothesis, each non-root of $f$ is toppled at most once in any proper toppling sequence of $(u, f)$ in $\mathcal{Z}$.
  For any $e\neq f\in E_u$, $u$ is the root of $f$ and so additionally  by Corollary~\ref{prinfirst}, we have $z(w, f)\leq z(u, f)$   for any $u\neq w\in f$.
Then
\begin{align*}
\beta(u)&=\alpha(u)-(r-1)\sum_{f\in E_u}z(u, f)+\sum_{f\in E_u, u\neq w\in f}z(w, f)\\
&=\alpha(u)-(r-1)+\sum_{u\neq w\in e}z(w, e)-\sum_{e\neq f\in E_u}((r-1)z(u, f)-\sum_{u\neq  w\in f}z(w, f))\\
&\leq \alpha(u),
\end{align*}
 where the final inequality follows from the fact that for any $u\neq w\in e$,   $z(w, e)\leq1$, and
  for every $e\neq f\in E_u$,
  \begin{align*}
  (r-1)z(u, f)-\sum_{u\neq  w\in f}z(w, f)\geq (r-1)z(u, f)-\sum_{u\neq  w\in f}z(u, f)\geq0.
  \end{align*}  Thus $r-1\leq \beta(u)  \leq \alpha(u)<r-1$, a contradiction.
\end{proof}

\begin{lemma}\label{lem_oncetopple}
Let $\mathcal{T}$ be an $r$-tree on the vertex set $[n]_0$ with a good ordering, and $\alpha$ a recurrent configuration on $\mathcal{T}$. In any toppling sequence $\mathcal{Z}$  from $\alpha$ to itself, viewed cyclically, for any $(v, e)\in \mathcal{Z}$,     each incidence determined by $e$   occurs exactly once between $(v, e)$ and before the   next occurrence of $(v, e)$  in $\mathcal{Z}$.
\end{lemma}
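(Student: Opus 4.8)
The plan is to combine the global structural fact coming from recurrence (Lemma~\ref{lem_sametimestoppled}) with the local control on proper toppling sequences established in Lemma~\ref{lem_toppletimes}, reading the cyclic toppling sequence $\mathcal{Z}$ starting from a fixed occurrence of $(v,e)$. First I would fix an edge $e$ with root $v$ and an occurrence of the incidence $(v,e)$ in $\mathcal{Z}$, and consider the segment $\mathcal{W}$ of $\mathcal{Z}$ from this $(v,e)$ up to (but not including) the next occurrence of $(v,e)$; cyclicity guarantees such a next occurrence exists since $\mathcal{Z}$ returns to $\alpha$. By Lemma~\ref{lem_sametimestoppled}, over the whole sequence $\mathcal{Z}$ every incidence determined by $e$ is toppled the same number of times, call it $t$; so $(v,e)$ itself occurs $t$ times, which partitions $\mathcal{Z}$ (cyclically) into $t$ blocks, each starting with an occurrence of $(v,e)$. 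It therefore suffices to show that within the single block $\mathcal{W}$ each of the $r$ incidences determined by $e$ occurs exactly once: summing over the $t$ blocks and using that the non-principal incidences of $e$ also each occur $t$ times in total forces exactly-once-per-block for all of them.

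The key step is to identify $\mathcal{W}$ with (an initial piece of) a proper toppling sequence of $(v,e)$. By the definition of $\mathcal{Z}(v,e)$ preceding Corollary~\ref{prinfirst}, the proper toppling sequence $\mathcal{Z}_1 = \mathcal{Z}(v,e)$ associated to this occurrence of $(v,e)$ runs from $(v,e)$ either to the last incidence determined by $e$ before the next $(v,e)$, or is just $\{(v,e)\}$ if no non-principal incidence of $e$ follows before the next $(v,e)$; in either case $\mathcal{Z}_1 \subseteq \mathcal{W}$ and $\mathcal{W}$ contains no occurrence of $(v,e)$ other than its first element, and no incidence of $e$ after the last element of $\mathcal{Z}_1$. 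Hence all incidences of $e$ appearing in $\mathcal{W}$ already appear in $\mathcal{Z}_1$. By Lemma~\ref{lem_toppletimes}, each non-principal incidence determined by $e$ occurs \emph{at most once} in $\mathcal{Z}_1$, hence at most once in $\mathcal{W}$; and $(v,e)$ occurs exactly once in $\mathcal{W}$ by construction. So within each block, every incidence of $e$ occurs at most once, with $(v,e)$ occurring exactly once.

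It remains to upgrade ``at most once per block'' to ``exactly once per block'' for the non-principal incidences. Here I would use the count again: the block containing a fixed $(v,e)$ has exactly one $(v,e)$; since $(v,e)$ occurs $t$ times in $\mathcal{Z}$, there are exactly $t$ blocks; since each non-principal incidence $(u,e)$ occurs $t$ times in $\mathcal{Z}$ (Lemma~\ref{lem_sametimestoppled}) and at most once per block, it must occur exactly once in each block. This gives the claim for every incidence determined by $e$, and since $e$ was arbitrary (and the fixed occurrence of $(v,e)$ was arbitrary, while all blocks are cyclic shifts of one another) the statement holds for all $(v,e)\in\mathcal{Z}$ read cyclically. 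The main obstacle I anticipate is the bookkeeping in the first step: making precise that $\mathcal{W}$ truly equals $\mathcal{Z}_1$ together with possibly some trailing incidences \emph{not} determined by $e$, so that no incidence of $e$ slips in between the end of $\mathcal{Z}_1$ and the next $(v,e)$ — this follows directly from the first bullet in the definition of a proper toppling sequence, but must be invoked carefully so that the ``at most once'' bound from Lemma~\ref{lem_toppletimes} really transfers to the whole block $\mathcal{W}$.
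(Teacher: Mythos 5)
Your proposal is correct and follows essentially the same route as the paper: Lemma~\ref{lem_sametimestoppled} supplies the equal global count of the incidences determined by $e$, Lemma~\ref{lem_toppletimes} supplies the at-most-once bound inside a proper toppling sequence of the principal incidence, and the two are combined by counting, with your block decomposition merely making the paper's compressed counting explicit. (Both you and the paper pass from the statement for consecutive occurrences of the \emph{principal} incidence to arbitrary $(v,e)$ with equal brevity; in the paper's subsequent use of this lemma only the principal case is invoked.)
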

\begin{proof}
Since $(v, e)$ is toppled from $\alpha$ to itself,   by Lemma~\ref{lem_sametimestoppled},  every incidence determined by $e$ and particularly the principal incidence,    say $(u, e)$,  must occur in $\mathcal{Z}$. By Lemma~\ref{lem_toppletimes}, each incidence determined by $e$ occurs at most once in any proper toppling sequence of $(u, e)$ in $\mathcal{Z}$ and by Lemma~\ref{lem_sametimestoppled} meantime, it must occur exactly once in any proper toppling sequence of $(u, e)$ in $\mathcal{Z}$. Hence for any incidence $(v, e)$, every incidence determined by $e$ occurs once in $\mathcal{Z}$   between $(v, e)$ and    before the next occurrence of $(v, e)$.
\end{proof}

Let $\mathcal{T}$ be an $r$-tree on the vertex set $[n]_0$  with a good ordering. For any edge $e$ of $\mathcal{T}$,
let $\hat{e}$ denote  the $(r-1)$-subset of vertices obtained from $e$ by deleting its root. Let 
$\one$ be the all-ones vector of   dimension $r-1$.
A configuration $\alpha$ on $\mathcal{T}$ is said to be \textit{critical} if for each $e\in E(\mathcal{T})$,  $((r-1)\one-\alpha)|_{\hat{e}}$ is a parking function of length $r-1$.

For an edge $e=\{i_1, i_2,\ldots, i_r\}$ of   $\mathcal{T}$,   use $c_e$ denote a directed $r$-cycle $i_1 \rightarrow i_2\rightarrow \cdots\rightarrow i_r\rightarrow i_1$ (given $e$, there are a number of arcs that can form a cycle but we select just one). Let $\mathcal{\vec{T}}$ denote a directed graph with the same vertex set $[n]_0$ which has $E(c_{e_1})\cup E(c_{e_2})\cup\cdots \cup E(c_{e_m})$ as arc set, and call it a \textit{representative digraph} of $\mathcal{T}$. Observe that for any two edges $e_i, e_j$ of  $\mathcal{T}$, we have  $e_i\cap e_j=c_{e_i}\cap c_{e_j}$. For example, the bottom digraph  in Figure~1 is isomorphic to a representative digraph of the hyperstar above it.

We now state and prove a key lemma needed to establish our main observation. First we define the concept of a hop with regard to a toppling sequence. Given a toppling sequence $\mathcal{Z}$ on a hypertree $\mathcal{T}$, if there exist two consecutive incidences in  $\mathcal{Z}$, say
\[
\mathcal{Z}=\{\ldots, (i,e), (j,f), \ldots\}
\]
 such that $j \not\in e$, then we say a {\em hop} occurs in this sequence  $\mathcal{Z}$.

\begin{lemma}\label{lem_representative-digraph}
Let $\mathcal{T}$ be an $r$-tree on the vertex set $[n]_0$  with a good ordering.
Then the  strong component  of the toppled digraph  to which a critical configuration  belongs is isomorphic to a representative digraph of $\mathcal{T}$.
\end{lemma}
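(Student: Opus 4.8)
The plan is to show that the strong component of the toppled digraph $\mathfrak{D}$ containing a critical configuration $\alpha$ is isomorphic, as a directed graph, to the representative digraph $\vec{\mathcal{T}}$. The natural strategy is to exhibit an explicit bijection $\Phi$ from the arc set of $\vec{\mathcal{T}}$ to the arc set of this strong component, together with a vertex bijection, and then verify incidences are preserved in both directions. First I would analyze the local structure at a single critical configuration $\alpha$: for each edge $e=\{i_1,\dots,i_r\}$ with root $i_1$, the condition that $((r-1)\one-\alpha)|_{\hat e}$ is a parking function of length $r-1$ means (via Lemma~\ref{prop_parkingfunc}, applied to $K_r$ with root $i_1$) that $\alpha|_{\hat e}$ is precisely a critical configuration of the rooted complete graph $K_r$. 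Hence for each edge exactly one vertex of $\hat e$ is unstable in the associated $K_r$-chip-firing, so that toppling $e$ contiguously from $\alpha$ corresponds to one full legal firing cycle of $K_r$ that returns to $\alpha|_{\hat e}$ while passing through exactly $r$ distinct configurations. This is where I expect to use Lemma~\ref{lem_oncetopple}: starting from $\alpha$ and following any toppling sequence back to $\alpha$, each incidence of each edge is toppled exactly once per cycle, so the strong component through $\alpha$ is a union of directed cycles each arising from one edge being toppled once around.

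The core of the argument, and the main obstacle, is to rule out hops inside the strong component of a critical configuration — i.e., to show that from any configuration $\beta$ reachable from $\alpha$ (and with $\alpha$ reachable from $\beta$), once toppling of an edge $e$ begins it must be carried out contiguously to completion before a different edge is touched. Intuitively, a critical configuration is ``tight'': every $\hat e$ carries exactly a parking configuration with total deficiency exactly $\binom{r-1}{2}$ (equivalently, each edge has total chip count on $\hat e$ fixed), leaving no slack for a partial toppling of one edge to enable a toppling elsewhere and come back. I would make this precise by tracking, for each edge $f$, the quantity $\sum_{w\in\hat f}\beta(w)$ along the toppling sequence: because all edges of the hypertree are vertex-disjoint off their roots (linearity of $\mathcal{T}$) and the shared vertices are exactly the roots, a hop from edge $e$ to a disjoint edge $f$ would require $f$ to have become unstable, but the invariance forced by recurrence (Lemma~\ref{lem_sametimestoppled}: every incidence of $f$ toppled equally often) together with criticality pins down that this cannot happen before $e$ is completed. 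Corollary~\ref{prinfirst} and Lemma~\ref{lem_toppletimes} should supply exactly the bookkeeping that each edge's proper toppling sequence is ``self-contained'' — of depth $0$, in effect — when we are cycling through a critical configuration.

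With hops excluded, the toppling sequence from $\alpha$ to itself decomposes canonically into blocks, one per edge $e$ that gets toppled, each block being the unique contiguous toppling of the $r$ incidences of $e$ (unique by Lemma~\ref{lem_unique-topple-order}), and by Lemma~\ref{lem_sametimestoppled} every edge is toppled the same number of times, which (again by minimality/criticality) is exactly once. So the strong component is a union, over $e\in E(\mathcal{T})$, of directed $r$-cycles $c_e$ on the vertex set $\{i_1,\dots,i_r\}$, glued along shared root vertices exactly as the $c_e$ are glued in $\vec{\mathcal{T}}$. I would then define $\Phi$ by sending the configuration reached just before toppling incidence $(i_j,e)$ to the vertex $i_j$ of $c_e$ in $\vec{\mathcal{T}}$ (consistent at the gluing points because the same configuration serves as the start of the next edge's block), and check this is a well-defined graph isomorphism: it is injective since the $r\cdot m$ configurations in the cycle are pairwise distinct by Lemma~\ref{lem_oncetopple}, surjective since every vertex of $\vec{\mathcal{T}}$ is hit, and arc-preserving in both directions by construction. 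Finally I would note the identity $e_i\cap e_j = c_{e_i}\cap c_{e_j}$ guarantees the gluing patterns match, completing the isomorphism $\mathfrak{D}[\,\text{strong component of }\alpha\,]\cong\vec{\mathcal{T}}$.
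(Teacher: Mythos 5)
Your overall strategy — rule out hops from a critical configuration, decompose the cycling into contiguous edge-blocks of length $r$, and map configurations to vertices of $\vec{\mathcal{T}}$ — is the same as the paper's, but there is a genuine gap at the heart of the isomorphism, and one of your counting claims is actually false. You assert that ``the $r\cdot m$ configurations in the cycle are pairwise distinct by Lemma~\ref{lem_oncetopple}.'' They cannot all be distinct: the representative digraph $\vec{\mathcal{T}}$ has $|[n]_0|=(r-1)m+1$ vertices, so if the strong component contained $rm$ pairwise distinct configurations your map $\Phi$ could not be a bijection. What actually happens is that when two edges $e_i$ and $e_j$ share a vertex $v$, the configuration reached ``at $v$'' while toppling $e_i$ must coincide with the configuration reached ``at $v$'' while toppling $e_j$; you wave at this with ``consistent at the gluing points because the same configuration serves as the start of the next edge's block,'' but this identification is precisely the hard technical content. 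The paper proves it as Claim~2: any two configurations reachable from $\alpha$ that lie in the same class $S_i$ are \emph{equal}, and the proof requires showing that the contraction of a shortest toppling sequence is the unique path from the root to $i$ in $\mathcal{T}$ (using the good ordering, the tree structure, and the uniqueness of the contiguous toppling order from Lemma~\ref{lem_unique-topple-order}). Without this, the strong component could a priori be a larger ``unfolding'' of $\vec{\mathcal{T}}$ in which shared vertices are duplicated, and no bijection exists.

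Two further points need more than assertion. First, surjectivity of the vertex correspondence onto all of $[n]_0$ is not automatic: you must show that for \emph{every} vertex $v$ of $\mathcal{T}$ there is a configuration in the strong component lying in $S_v$, which the paper does by induction on $\mathrm{dist}(0,v)$, propagating the parking-function property along the unique root-to-$v$ path (Claim~4). Second, a strong component is the set of \emph{all} configurations mutually reachable with $\alpha$, not a single closed walk from $\alpha$ to itself; you must show that every directed cycle anywhere in the component has length $r$ and is determined by a single edge (the paper's Claim~3, which leans on Claim~2 to see that the toppled vertices along such a cycle are distinct), and that each edge contributes exactly one such cycle (Claim~5). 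Your hop-exclusion sketch (tracking $\sum_{w\in\hat f}\beta(w)$ and citing Lemmas~\ref{lem_sametimestoppled} and~\ref{lem_toppletimes}) is a reasonable alternative instinct, but the paper's Claim~1 does more than exclude hops: it pins down exactly which class $S_w$ the resulting configuration belongs to, and that extra information is what feeds the injectivity and cycle-structure arguments you are missing.
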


\begin{proof}
Let $\mathfrak{D}$ denote the topple digraph of $\mathcal{T}$, and $\alpha$   a  critical configuration on $\mathcal{T}$. Let $C_{\alpha}$ denote the strong component of $\mathfrak{D}$ that contains   $\alpha$.
The remainder of the proof is divided into a series of claims proved in succession.

\textbf{Claim 1.} For any     $\beta\in \mathfrak{D}$ which is reachable from $\alpha$, if the vertices   toppled in a toppling sequence from $\alpha$ to $\beta$ are distinct, then no hopping occurs. Furthermore,  if we assume that  $\beta\in S_{w}$, then either $w$ is different from any vertex toppled in the toppling sequence from $\alpha$ to $\beta$, or $w$ is the    root of the last edge toppled.

\begin{proof}
Assume that $\beta\neq\alpha$ and the toppling sequence from $\alpha$ to $\beta$ is given by
\begin{equation}\label{topplesequence}
\{(i_1^{(1)}, e_1), \ldots, (i_{s_1}^{(1)}, e_1), (i_1^{(2)}, e_{2}), \ldots, (i_{s_2}^{(2)}, e_{2}), \ldots, (i_1^{(k)}, e_{k}), \ldots, (i_{s_k}^{(k)}, e_{k})\},
\end{equation}
where $k\geq1$ and when $k\geq2$, $e_{j}\neq e_{j+1}$ for any $j=1, \ldots, k-1$.
In this toppling sequence the resulting configuration  is denoted by $\alpha_j$ after toppling the incidence $(i_{s_j}^{(j)}, e_{j})$ immediately, for $j=1, \ldots, k$. Note that $i_1^{(1)}$ is the root 0, and  $\alpha_k=\beta$. Assume that $\beta\in S_{w}$. Since $\alpha_1$ is obtained from the critical configuration $\alpha$ by toppling some incidences determined by $e_1$, $\alpha_1(v)\geq r-1$ implies that $v\in e_1$. However,   $\alpha_1\in S_{i_1^{(2)}}$ and so $i_1^{(2)}\in e_1$.

By contradiction,   suppose some vertex $i_1^{(l+1)}$ with $1< l\leq k$ such that  $i_1^{(l+1)}\notin e_{l}$ where, by convention, we assume that $i_1^{(k+1)}=w$. Further, we assume $i_1^{(l+1)}$ is the first of all vertices toppled satisfying this hypothesis, that is,
\[
i_1^{(2)}\in e_{1}, i_1^{(3)}\in e_{2}, \ldots, i_1^{(l)}\in e_{l-1},\quad  \mathrm{but}\quad i_1^{(l+1)}\notin e_{l}.
\]
Observe that for any $u\in e_l\backslash\{i_1^{(l)}\}$,  $i_1^{(1)}  e_{1}  i_1^{(2)}  e_{2}\cdots i_1^{(l)}  e_{l}u$ is a walk. Since the vertices  $i_1^{(1)},  i_1^{(2)}, \ldots,  i_1^{(l)}$ are distinct and $u\neq i_1^{(l)}$, $e_{j}\neq e_{j+1}$ for  $j=1, \ldots, k-1$,
 if either $u\in \{i_1^{(1)},  i_1^{(2)}, \ldots,  i_1^{(l-1)}\}$ or   $e_{1},    e_{2}, \ldots,   e_{l}$  are not distinct, then  the walk $i_1^{(1)}  e_{1}  i_1^{(2)}  e_{2}\cdots i_1^{(l)}  e_{l}u$ will contain a cycle of $\mathcal{T}$,  which is not possible. Thus $i_1^{(1)}  e_{1}  i_1^{(2)}  e_{2}\cdots i_1^{(l)}  e_{l}u$ is a path in $\mathcal{T}$.

Since $\alpha_l$ is obtained from the critical configuration $\alpha$ by toppling some incidences determined by $e_1, \ldots, e_{l}$,
  $\alpha_l(v)\geq r-1$ implies that  $v\in e_1\cup \cdots\cup e_{l}$. Then as   $i_1^{(l+1)}\notin e_l$ and $\alpha_l\in S_{i_1^{(l+1)}}$ it follows that $i_1^{(l+1)}\in(e_1\cup \cdots\cup e_{l-1})\setminus e_l$.

Furthermore, since   $\alpha_{l-1}$ is obtained from   $\alpha$ by toppling some incidences determined by $e_1, \ldots, e_{l-1}$,
  $\alpha_{l-1}(v) \neq \alpha(v)$ implies that $v\in e_1\cup \cdots\cup e_{l-1}$ and so  $\alpha_{l-1}|_{\hat{e}_{l}}=\alpha|_{\hat{e}_{l}}$.
  Then $((r-1)\one-\alpha_{l-1})|_{\hat{e}_{l}}$ is a parking function and  there must exist a vertex $v\in e_{l}$  such that $\alpha_l(v)\geq r-1$.

If  $v\neq i_1^{(l)}$, then $v$ is a non-root vertex in $e_{l}$. Since $i_1^{(1)}  e_{1}  i_1^{(2)}  e_{2}\cdots i_1^{(l)}  e_{l}v$ is path from the root 0 to $v$, $\mathrm{dist}(0, i_1^{(l+1)})<\mathrm{dist}(0, v)$ as  $i_1^{(l+1)}\in(e_1\cup \cdots\cup e_{l-1})\setminus e_l$. Thus  $x_v>x_{i_1^{(l+1)}}$, a contradiction with that $\alpha_{l}\in S_{i_1^{(l+1)}}$.

  If   $v=i_1^{(l)}$, then all vertices in $e_l$ have been toppled (exactly once) in the toppling sequence from $\alpha$ to $\alpha_l$ because $((r-1)\one-\alpha_{l-1})|_{\hat{e}_{l}}$ is a parking function, and so
   $\alpha_{l}=\alpha_{l-1}$. However,   $\alpha_{l-1}=\alpha_{l}\in S_{i_1^{(l+1)}}$ and $\alpha_{l-1}\in S_{i_1^{(l)}}$. This leads to  a contradiction since $i_1^{(l+1)}\neq i_1^{(l)}$ if $l<k$.
The remaining case is $l=k$ and $w=i_1^{(k+1)}= i_1^{(k)}$. Hence $w\in e_{k}$.

Now we prove the second statement in the claim. If $w\notin\{i_1^{(1)}, \ldots  i_{s_1}^{(1)}, i_1^{(2)},  \ldots, i_{s_2}^{(2)}, \ldots, i_1^{(k)}, \ldots, i_{s_k}^{(k)}\}$, there is nothing to prove. Otherwise, $w\in e_{k}$ from above, and $w\in \{i_1^{(k)}, \ldots, i_{s_k}^{(k)}\}$. Since  $((r-1)\one-\alpha_{k-1})|_{\hat{e}_{k}}$ is a parking function, if $s_k<r$, then there must exist a vertex $u\in e_k\backslash\{i_1^{(k)},   \ldots, i_{s_k}^{(k)}\}$ such that   $\beta\in S_u$. This contradicts the assumption that $\beta\in S_w$ and $w\in \{i_1^{(k)}, \ldots, i_{s_k}^{(k)}\}$. Therefore $s_k=r$ and this implies that $w=i_1^{(k)}$.
\end{proof}

For the given toppling sequence  from $\alpha$ to $\beta$ in \eqref{topplesequence},  the abbreviated sequence $i_1^{(1)}  e_{1}  i_1^{(2)}  e_{2}\cdots i_1^{(k)}  e_{k}w$ is called a \textit{contraction} of the toppling sequence \eqref{topplesequence}.

\textbf{Claim 2.} For any two   $\beta, \gamma\in   \mathfrak{D}$ which both are reachable from $\alpha$, if there exists some $i$ ($0\leq i\leq n$) such that   $\beta, \gamma\in S_i$, then $\beta=\gamma$. Furthermore, the contraction of the toppling sequence corresponding to a shortest directed path from $\alpha$ to $\beta$ is the unique path from 0 to $i$ in $\mathcal{T}$.

\begin{proof}
Suppose, for the sake of a contradiction,  that $\beta \neq\gamma$. Then there are two distinct shortest directed paths in  $\mathfrak{D}$, say $P_1$ and $P_2$,  from  $\alpha$ to $\beta$ and $\gamma$, respectively. Assume the toppling sequences from  $\alpha$ to $\beta$  along $P_1$ and that from  $\alpha$ to $\gamma$ along $P_2$ are given by
$$\mathcal{Z}_1=\{(i_1^{(1)}, e_1), \ldots, (i_{r_1}^{(1)}, e_1), (i_1^{(2)}, e_{2}), \ldots, (i_{r_2}^{(2)}, e_{2}), \ldots, (i_1^{(k)}, e_{k}), \ldots, (i_{r_k}^{(k)}, e_{k})\},$$
 and
 \[
 \mathcal{Z}_2=\{(j_1^{(1)}, f_{1}), \ldots, (j_{s_1}^{(1)}, f_{1}), (j_1^{(2)}, f_{2}), \ldots, (j_{s_2}^{(2)}, f_{2}), \ldots, (j_1^{(l)}, f_{l}), \ldots, (j_{s_l}^{(l)}, f_{l})\},
 \]
 where $e_{\varsigma}\neq e_{\varsigma+1}$ for all $\varsigma=1, \ldots, k-1$ and  $f_{\varsigma}\neq f_{\varsigma+1}$ for all $\varsigma=1, \ldots, l-1$.
Use $\beta_{p}$ ($\gamma_{q}$, resp.) to denote the resulting configuration obtained by  toppling the incidence $ (i_{r_p}^{(p)}, e_{p})$ ($ (j_{s_q}^{(q)}, f_{q})$, resp.), for $p=1, \ldots, k$ ($q=1, \ldots, l$, resp.). Note that $\beta_{k}=\beta$ and $\gamma_{l}=\gamma$.

\textbf{Case 1.}  The collections $i_1^{(1)},  \ldots, i_{r_1}^{(1)},   \ldots, i_1^{(k)}, \ldots, i_{r_k}^{(k)}, i$ and $j_1^{(1)}, \ldots, j_{s_1}^{(1)}, \ldots, j_1^{(l)},   \ldots, j_{s_l}^{(l)}, i$ each consist of distinct integers.

By Claim 1, the contraction of $\mathcal{Z}_1$    is a  walk in $\mathcal{T}$ because  any two neighbouring elements are incident,  and is denoted by   $W_1=i_1^{(1)} e_{1}  i_1^{(2)}  e_{2}  \cdots   i_1^{(k)}  e_{k} i$.  Since $i_1^{(1)},    i_{1}^{(2)},   \ldots, i_1^{(k)},  i$  are distinct and   $e_{\varsigma}\neq e_{\varsigma+1}$ for  $\varsigma=1, \ldots, k-1$, the edges $e_{1},  e_{2},  \ldots,     e_{k}$ are distinct, otherwise $W_1$ contains a cycle of $\mathcal{T}$, which is not possible. Thus $W_1$ is a path in $\mathcal{T}$. Similarly it can be shown that   $W_2=j_1^{(1)} f_{1}  j_1^{(2)}  f_{2}  \cdots   j_1^{(l)}  f_{l} i$ is also a path in $\mathcal{T}$.  Note that $i_1^{(1)}=j_1^{(1)}=0$, which is the root of  $\mathcal{T}$. Thus  $W_1$ and  $W_2$ are two paths from 0 to $i$ in $\mathcal{T}$.  It follows that $W_1=W_2$ since the path in  $\mathcal{T}$  between any two vertices   is unique.   Hence $k=l$, $e_\varsigma=f_\varsigma$  and  $i_1^{(\varsigma)}=j_1^{(\varsigma)}$ for $\varsigma=1, \ldots, k$.

Since $\alpha$ is a critical configuration on $\mathcal{T}$ and $i_1^{(1)}=0$, the incidences determined by $e_1$ are toppled contiguously from the configuration $\alpha$. By Lemma~\ref{lem_unique-topple-order},    the toppling order  of such incidences  is determined uniquely  by $\alpha|_{\hat{e}_{1}}$, which together with the assumption of $i_1^{(2)}=j_1^{(2)}$,  implies that $(i_1^{(1)},  \ldots, i_{r_1}^{(1)})=(j_1^{(1)},  \ldots, j_{s_1}^{(1)})$, and so $\beta_1=\gamma_1$.  Since $\beta_1$ is obtained from $\alpha$ by toppling some incidences determined by $e_1$ and  $i_1^{(2)}\in e_1\cap e_2$ is the root of $e_2$, $\beta_1|_{\hat{e}_{2}}=\alpha|_{\hat{e}_{2}}$. It follows that  $((r-1)\one-\beta_1)|_{\hat{e}_{2}}=((r-1)\one-\alpha)|_{\hat{e}_{2}}$  is a parking function by the assumption on $\alpha$. Note that  any unstable vertex not in $e_2$ must be in $e_1$ so their labels are at most the labels of the vertices in $e_2$, which implies that the incidences determined by $e_2$ can be toppled contiguously from $\beta_1$.  Similarly it can be shown that $(i_1^{(2)},   \ldots, i_{r_2}^{(2)})=(j_1^{(2)},   \ldots, j_{s_2}^{(2)})$. Thus $\beta_2=\gamma_2$. Continuing in this way we have
$\beta_k=\gamma_l$, i.e. $\beta=\gamma$.

\textbf{Case 2.} Either collection $i_1^{(1)},  \ldots, i_{r_1}^{(1)},   \ldots, i_1^{(k)}, \ldots, i_{r_k}^{(k)}, i$ or $j_1^{(1)}, \ldots, j_{s_1}^{(1)}, \ldots, j_1^{(l)},   \ldots, j_{s_l}^{(l)}, i$ has repeated vertices.

Without loss of generality, assume that $i_1^{(1)},  \ldots, i_{r_1}^{(1)},   \ldots, i_1^{(k)}, \ldots, i_{r_k}^{(k)}, i$  have  repeated vertices. Suppose $i_{p'}^{(q')}$ ($1\leq p'\leq r_{q'}$)  is  the first listed vertex equal to, say $i_p^{(q)}$ ($1\leq p\leq r_q$). Note that $q'\geq q$ and $p'>p$ when  $q'= q$. By Claim 1,  $i_{p'}^{(q')}$ is the root of $e_q$ and $\beta_{q-1}=\beta_q$. However, this is a contradiction with the choice of $P_1$ as a shortest directed path from $\alpha$ to $\beta$ in $\mathfrak{D}$.

Therefore we conclude that $\beta=\gamma$, and from two cases above, we know that $i_1^{(1)}  e_1 i_1^{(2)}  e_{2}  \cdots i_1^{(k)}  e_{k} i$ is a path from 0 to $i$ in $\mathcal{T}$.
\end{proof}

\textbf{Claim 3.}  Any directed cycle in the strong component $C_{\alpha}$ must be of length $r$ and is determined by a fixed edge of $\mathcal{T}$.

\begin{proof}
Choose any given directed cycle $C:=\beta_1\beta_2\cdots \beta_{k}\beta_1$  in $C_{\alpha}$. If $\alpha$ is not in $C$, then assume, without loss of generality, that the distance from $\alpha$ to $\beta_1$ in $\mathfrak{D}$ is no more than that from $\alpha$ to other $\beta_i$, i.e. $\mathrm{dist}(\alpha, \beta_1)=\min\{\mathrm{dist}(\alpha, \beta_i): i=1,\ldots, k\}$.  Let $\alpha \alpha_1\cdots \alpha_l\beta_1$ be such a shortest directed path. Assume that
 \begin{equation*}
   \alpha \stackrel{(i_1,e_1)}{\longrightarrow}  \alpha_1\stackrel{(i_2,e_2)}{\longrightarrow} \alpha_2\rightarrow\cdots\rightarrow \alpha_l   \stackrel{(i_{l+1},e_{l+1})}{\longrightarrow} \beta_1\stackrel{(i_{l+2},e_{l+2})}{\longrightarrow} \beta_2\rightarrow\cdots\rightarrow \beta_k \stackrel{(i_{l+k+1},e_{l+k+1})}{\longrightarrow} \beta_1.
 \end{equation*}
 If $\alpha$ is in $C$, then a similar argument applies.

 By Claim 2, vertices $i_1, i_2, \ldots,   i_{l+2}$ are  distinct as $\alpha, \alpha_1, \ldots, \alpha_l, \beta_1$ are  distinct. Similarly, by Claim 2, vertices $i_{l+2}, \ldots, i_{l+k+1}$ are  distinct as $\beta_1, \ldots \beta_{k}$  are distinct. Further, by Claim 2 again, any one of $i_{l+3}, \ldots, i_{l+k+1}$ is different from any of $i_1, i_2, \ldots,   i_{l+1}$ because otherwise one of $\beta_2, \ldots \beta_{k}$  is closer  to $\alpha$.  Thus the vertices $i_1, i_2, \ldots,   i_{l+2}, \ldots, i_{l+k+1}$ are distinct.
 By Claim 1,    $e_{l+2 }=\cdots =e_{l+k+1}$,  $k=r$ and $e_{l+2 }=\{i_{l+2}, \ldots, i_{l+r+1}\}$. Therefore, $C$ is a cycle of length $r$ and is determined by the edge $e_{l+2}$.
\end{proof}

Now consider a natural mapping $\pi$ from the vertex set $V(C_{\alpha})$, consisting of all configurations strongly connected to $\alpha$, to the vertex set $[n]_0$ of $\mathcal{T}$ as follows: for any $\beta\in C_{\alpha}$, if $\beta\in S_i$, then $\pi(\beta)=i$.
The map $\pi$ is well-defined. Further, $\pi$ is injective by Claim 2. In fact, $\pi$ is a bijection, as demonstrated in the next claim.

 \textbf{Claim 4.} The mapping $\pi: V(C_{\alpha})\rightarrow [n]_0$ is a bijection. Further, $\pi^{-1}(v)|_{\hat{e}}=\alpha|_{\hat{e}}$ when $v$ is the root of $e$.
 \begin{proof}
 For any $v\in [n]_0$, by induction on $\mathrm{dist}(0, v)$,  we show that there exists $\beta\in C_{\alpha}$ such that  $\beta\in S_v$
  and $\beta|_{\hat{e}}=\alpha|_{\hat{e}}$ when $v$ is the root of $e$.  When $\mathrm{dist}(0, v)=0$, we have $v=0$ and then $\pi^{-1}(0)=\alpha$, $\pi^{-1}(0)|_{\hat{e}}=\alpha|_{\hat{e}}$ for any $e\in E_0$.

  Suppose the conclusion holds for any   vertex of distance $k-1$ from the root 0. For any vertex $v\in [n]_0$ with $\mathrm{dist}(0, v)=k$, there exists a unique path $P$ from the root $0$ to $v$ in $\mathcal{T}$, say $P=0e_1v_1e_2\cdots v_{k-1}e_k v$. Then $\mathrm{dist}(0, v_{k-1})=k-1$, and by the induction hypothesis, there exists   $\beta\in C_{\alpha}$ such that $\beta=\pi^{-1}(v_{k-1})$ and $\beta|_{\hat{e}}=\alpha|_{\hat{e}}$ whenever $v_{k-1}$ is the root of  $e$. Since $P$ is a path, $v_{k-1}$ is the root of  $e_{k}$ and so $((r-1)\one-\beta)|_{\hat{e}_{k}}$ is a parking function. By Claim 2, any unstable vertex with respect to $\beta$ must be in $e_1\cup \cdots \cup e_{k-1}$ and has labelling at most the labels in $e_{k}$, and so the incidences determined by $e_{k}$ are toppled contiguously. By Lemma~\ref{lem_unique-topple-order}, the ordering of toppling for incidences determined by $e_k$ is unique,  say
  \begin{equation*}
\beta\stackrel{(i_1,e_k)}{\longrightarrow} \beta_1\stackrel{(i_2,e_k)}{\longrightarrow} \beta_2\rightarrow\cdots\rightarrow \beta_{r-1}\stackrel{(i_r,e_k)}{\longrightarrow} \beta,
  \end{equation*}
where $e_k=\{i_1, i_2, \ldots, i_r\}$,  $i_1 =v_{k-1}$ is the root of $e_k$ and assume that $v=i_s$ ($2\leq s\leq r$). Then $\pi^{-1}(v)= \beta_{s-1}$ and $\beta_{s-1}$  is strongly connected to $\beta$, so $\beta_{s-1}\in C_{\alpha}$. For any $e\in E_v$, and if  $v$ is the root of $e$, then $e\notin\{e_1, \ldots, e_k\}$ and so $\beta_{s-1}|_{\hat{e}}=\alpha|_{\hat{e}}$.
  \end{proof}

By Claim 3, we know that any directed cycle in $C_{\alpha}$ has length $r$ and  is determined by a fixed edge of $\mathcal{T}$. Now we show the converse holds as well.

 \textbf{Claim 5.} Each edge of $\mathcal{T}$   determines one and  only one directed cycle of length $r$ in $C_{\alpha}$.
\begin{proof}
Choose any edge $e=\{i_1, i_2, \ldots, i_r\}$ of $\mathcal{T}$. Assume that $i_1$ is the root of  $e$ and let $\beta=\pi^{-1}(i_1)$.
By Claim 4, $\beta|_{\hat{e}}=\alpha|_{\hat{e}}$ and so $((r-1)\one-\beta)|_{\hat{e}}$ is a parking function. By Claim 2, we know that any possible  unstable vertex with respect to $\beta$ is no more than those in $e$ and so the incidences determined by $e$ can be toppled contiguously   from   $\beta$. By Lemma~\ref{lem_unique-topple-order}, the toppling sequence inside $e$ from $\beta$ is unique, say
\[
\beta\stackrel{(i_1,e)}{\longrightarrow} \beta_1 \stackrel{(i_2',e)}{\longrightarrow} \beta_2  \stackrel{(i_3',e)}{\longrightarrow}  \cdots \stackrel{(i_r',e)}{\longrightarrow} \beta.
\]
  Thus each edge $e$ of $\mathcal{T}$ determines a unique directed cycle  $\beta \rightarrow\beta_1\rightarrow\cdots\rightarrow\beta_{r-1}\rightarrow\beta$.
  \end{proof}

Now applying the above claims (namely, 3, 4, and 5), it follows that  $C_{\alpha}$ is indeed isomorphic to  a representative digraph of $\mathcal{T}$.  This completes the proof of Lemma \ref{lem_representative-digraph}.
\end{proof}

The next result concerns  the cycle structure associated with a toppled digraph of an $r$-tree with a particular assumed ordering of the vertices.
Compared with Example~\ref{exam1}, where cycles of length $kr$ occur for any $1\leq k\leq \Delta$, we conclude that the cycle structure of toppled digraphs rely heavily on the ordering of the non-root vertices of a rooted $\mathcal{T}$. Observe that the ordering of the vertices of the hyperstar in Example~\ref{exam1} is not a good ordering.
 Although we are not able to verify which orderings of the vertices guarantee the occurrence of certain types of cycles, we will find an ordering that guarantees the occurrence of only cycles of length $r$ as follows.

\begin{lemma}\label{cyclebyedge}
Any directed cycle of the toppled digraph associated to   an $r$-tree $\mathcal{T}$ with a good ordering on the vertex set has length of $r$ and is determined by an edge of $\mathcal{T}$.
\end{lemma}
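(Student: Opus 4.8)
The plan is to reduce the statement of Lemma~\ref{cyclebyedge} to Lemma~\ref{lem_representative-digraph}, which already analyses the strong component of a \emph{critical} configuration. The key observation is that every directed cycle of the toppled digraph $\mathfrak{D}$ lies inside a single strong component, and a vertex of $\mathfrak{D}$ lies on a directed cycle if and only if its configuration is recurrent (a recurrent configuration admits a nonempty toppling sequence returning to itself, i.e.\ a closed walk, hence a directed cycle through it, and conversely any vertex on a directed cycle is recurrent by definition). So the first step is: let $C$ be an arbitrary directed cycle of $\mathfrak{D}$, pick a vertex $\beta$ on $C$, and note $\beta$ is recurrent.

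The main step is then to show that the strong component containing $\beta$ contains a critical configuration, so that Lemma~\ref{lem_representative-digraph} applies and tells us $C$ has length $r$ determined by an edge. To do this I would use Lemma~\ref{lem_oncetopple}: in a toppling sequence $\mathcal{Z}$ from $\beta$ to itself, viewed cyclically, each incidence determined by any edge $e$ occurring in $\mathcal{Z}$ is toppled exactly once between consecutive occurrences of $(v,e)$. Combined with Lemma~\ref{lem_sametimestoppled} (all $r$ incidences of a given edge are toppled the same number of times), this means that the edges appearing in $\mathcal{Z}$ form a set, each of whose incidences fires the same total number of times per loop. I would then argue that by reordering/continuing the toppling one can reach, inside the same strong component, a configuration $\alpha$ for which $((r-1)\one-\alpha)|_{\hat e}$ is a parking function of length $r-1$ for every edge $e$ of $\mathcal{T}$ — i.e.\ a critical configuration. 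The cleanest route is probably: topple contiguously edge by edge along the distance partition from the root (as is done repeatedly in the proof of Lemma~\ref{lem_representative-digraph}, via Lemma~\ref{lem_unique-topple-order} and Corollary~\ref{prinfirst}), each such contiguous toppling of an edge $e$ replacing the restriction to $\hat e$ by a parking-function configuration, and this process stabilises to a critical $\alpha$ reachable from $\beta$; since $\beta$ is recurrent and the process is along legal topplings, $\alpha$ is strongly connected to $\beta$.

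Once $\beta$ and the critical $\alpha$ lie in the same strong component $C_\alpha$, Lemma~\ref{lem_representative-digraph} gives that $C_\alpha$ is isomorphic to a representative digraph $\mathcal{\vec T}$ of $\mathcal{T}$, and Claim~3 inside that proof already states that any directed cycle of $C_\alpha$ has length $r$ and is determined by a fixed edge of $\mathcal{T}$. Since $C$ is a directed cycle of $\mathfrak{D}$ through $\beta\in C_\alpha$, it is a directed cycle of $C_\alpha$, so it has length $r$ and is determined by an edge of $\mathcal{T}$, which is exactly the conclusion. Finally one notes conversely that every edge of $\mathcal{T}$ does arise this way (Claim~5), so the correspondence is tight, though only the forward direction is needed for the statement.

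The hard part will be the middle step: making precise that from an arbitrary recurrent $\beta$ one can legally reach a critical configuration within the same strong component. One must be careful that the good ordering forces the toppling order, that toppling one edge does not destroy the parking-function property already achieved on earlier edges (here the direction of the distance partition and Corollary~\ref{prinfirst} are essential, since a lower edge's root is toppled before its non-root vertices), and that the process terminates — termination following because a recurrent configuration cannot produce an infinite legal toppling sequence, by the confluence/finiteness properties of chip-firing (Theorems~\ref{confluencenoroot} and~\ref{confluenceroot}) applied edge-locally as in the discussion preceding Lemma~\ref{lem_unique-topple-order}. Everything else is bookkeeping already carried out in the proof of Lemma~\ref{lem_representative-digraph}.
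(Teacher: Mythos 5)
Your reduction has a genuine gap at its central step: it is \emph{not} true that the strong component of the toppled digraph containing an arbitrary recurrent configuration $\beta$ contains a critical configuration, so Lemma~\ref{lem_representative-digraph} cannot be invoked for an arbitrary directed cycle. Indeed, the paper's own Theorem~\ref{thm_main_charpoly} depends on the failure of this claim: for every proper connected subgraph $H$ of $\mathcal{T}$ with $e(H)\geq 1$ there are configurations $\alpha$ satisfying \eqref{conditionalphaH} whose strong component is a representative digraph of $H$, hence contains $e(H)\geq 1$ directed cycles but no critical configuration --- for a boundary edge $e_i\in\partial(H)$ the vertices of $\hat e_i$ lie outside $V(H)$, so topplings inside $H$ never alter $\alpha|_{\hat e_i}$, and $((r-1)\one-\alpha)|_{\hat e_i}$ is by construction not the complement of a parking function. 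If every directed cycle did live in the component of a critical configuration, every non-monomial factor of $\phi_{\mathfrak{D}}(\lambda)$ would be a power of $\varphi_{\mathcal{T}}(\lambda)$, and the factors $\varphi_H(\lambda)$ for proper $H$ in Theorem~\ref{thm_main} (visible concretely in Table~1) could not arise. A secondary flaw is the inference ``since $\beta$ is recurrent and the process is along legal topplings, $\alpha$ is strongly connected to $\beta$'': recurrence supplies one particular closed toppling sequence, but a different legal toppling sequence out of $\beta$ may leave the strong component permanently, so reachability alone does not give strong connectivity.

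The ingredients you cite that do work --- the existence of a principal incidence in the cyclic toppling sequence (Lemma~\ref{lem_sametimestoppled}) and the control of repetitions from Lemma~\ref{lem_oncetopple} --- are exactly what the paper's direct proof uses, and it never passes through critical configurations: one first locates a principal incidence $(v,e)$ whose proper toppling sequence has depth zero (via Lemma~\ref{lem_properseq}), then shows using Lemma~\ref{lem_oncetopple}, Corollary~\ref{prinfirst} and Lemma~\ref{cutvertex} that this proper toppling sequence consists of precisely the $r$ distinct incidences determined by $e$; its representative vector is then $j_e$, so the configuration returns to itself after those $r$ topplings and the cycle is forced to be exactly that length-$r$ loop. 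Salvaging your plan would require an analogue of Lemma~\ref{lem_representative-digraph} for configurations critical only relative to the sub-hypertree of edges actually toppled along the cycle, but identifying that sub-hypertree and showing the cycle never leaves a single edge is precisely the content of the direct argument, so the detour saves nothing.
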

\begin{proof}
Let $\mathcal{T}$ be an $r$-tree   on the vertex set  $[n]_0$ with a good ordering, and $\mathfrak{D}$ be the toppled digraph of $\mathcal{T}$ associated with this assumed ordering. For any given directed cycle $C$ of $\mathfrak{D}$, consider the toppling sequence $\mathcal{Z}$ of $C$ (viewed cyclically). By Lemma~\ref{lem_sametimestoppled}, there must exist a principal incidence in $\mathcal{Z}$.

\textbf{Claim 1.}  There is a proper toppling sequence of some principal incidence in $\mathcal{Z}$ such that its depth is equal to zero.
 \begin{proof}
 Choose arbitrarily a principal incidence, say $(v, e)$, in $\mathcal{Z}$. If the depth of some proper toppling sequence $\mathcal{Z}(v, e)$ of $(v, e)$ in  $\mathcal{Z}$ is equal to zero, there is nothing to prove.  Otherwise, there is a principal incidence $(u, f)\in  \mathcal{Z}(v, e)\setminus\{(v, e)\}$ such that the depth of $\mathcal{Z}(v, e)$ is equal to $\mathrm{dist}(v, u)$ in $\mathcal{T}$.   Now  we show that the depth of $\mathcal{Z}(u, f)$ is equal to zero. Suppose, by contradiction, that the depth of $\mathcal{Z}(u, f)$ is positive. Let $\mathcal{Z}_1=\mathcal{Z}(v, e)$.  From the proof of  Lemma~\ref{lem_properseq}, $\mathcal{Z}(u, f)=\mathcal{Z}_1(u, f)$ and so the depth of $\mathcal{Z}(v, e)$  should be no less than $\mathrm{dist}(v, u)$ plus the depth of $\mathcal{Z}_1(u, f)$, which implies that the depth of $\mathcal{Z}(v, e)$ is strictly larger than $\mathrm{dist}(v, u)$, a contradiction with the choice of $(u, f)$.
\end{proof}

Thus there is a principal incidence, say $(v, e)$, and a proper toppling sequence of $(v, e)$ in $\mathcal{Z}$, say $\mathcal{Z}(v, e)$, such that the depth of $\mathcal{Z}(v, e)$ is equal to zero.

\textbf{Claim 2.} $\mathcal{Z}(v, e)$ consists of $r$ distinct incidences determined by $e$.
\begin{proof}
By Lemma~\ref{lem_oncetopple}, each incidence determined by $e$ occurs once in $\mathcal{Z}(v, e)$ so it suffices to verify that no incidence determined by other edges occurs in $\mathcal{Z}(v, e)$. By contradiction, suppose that some incidence $(u, f)$ with $f\neq e$ occurs in $\mathcal{Z}(v, e)$ and,  without loss of generality, $(u, f)$ is the first such incidence in $\mathcal{Z}(v, e)$. Assume that
\[
\mathcal{Z}(v, e)=\{(v, e), \ldots, (v', e), (u, f), \ldots, (v'', e)\}
\]
and every incidence before $(u, f)$ in $\mathcal{Z}(v, e)$ is determined by $e$.

If $\mathfrak{L}(e)\leq \mathfrak{L}(f)$, i.e.  the level of $e$ is not greater than that of $f$, then   $(u, f)$ must be a principal incidence by   Corollary~\ref{prinfirst}.  Note that $(v, e)$ is the principal incidence   and  $(v'', e)$ is a non-principal incidence determined by $e$, and no $(v, e)$ occurs between $(u, f)$ and $(v'', e)$ in the proper toppling sequence $\mathcal{Z}(v, e)$. Then by Corollary~\ref{prinfirst} again, $\mathfrak{L}(f)\leq \mathfrak{L}(e)$ cannot hold, so $\mathfrak{L}(e)< \mathfrak{L}(f)$. Let $\alpha$ and $\beta$ be the configurations that
$(v, e)$ and $(u, f)$ follow in $\mathcal{Z}(v, e)$  respectively,  and let $z$ be the representative vector of the subsequence $\{(v, e), \ldots, (v', e)\} $ of $\mathcal{Z}(v, e)$. Note that $z\leq j_e$, namely $z$ is a $(0, 1)$-vector and the support of $z$ consists of some incidences determined by $e$.  Since $\mathfrak{L}(e)< \mathfrak{L}(f)$ and $\alpha\in S_v$, we have $\alpha(u)<r-1$.
As the depth of $\mathcal{Z}(v, e)$ is equal to zero, $f\notin E(\mathcal{T}_e)$ and thus
\begin{align*}
\beta(u)=\alpha(u)-(r-1)\sum_{h\in E_u}z(u, h)+\sum_{u\neq w\in h\in E_u}z(w, h)=\alpha(u),
\end{align*}
as $u\notin e$ and $z(w, h)=0$ for any $h\neq e$. This implies that $r-1\leq\beta(u)=\alpha(u)<r-1$, a contradiction.

Now consider the case when $\mathfrak{L}(e)>\mathfrak{L}(f)$. Note that $\mathcal{T}$ is the coalescence of $\mathcal{T}_e$ and  $\mathcal{T}_e'$.
Let $\alpha_1$ and $\beta_1$ be the configurations that
$(u, f)$ and $(v'', e)$ follow in $\mathcal{Z}(v, e)$,  respectively. Since $\mathfrak{L}(e)>\mathfrak{L}(f)$ and $\alpha_1\in S_u$, we have
\begin{itemize}
  \item $\alpha_1(a)<r-1$, for any $a\in \mathcal{T}_e\setminus\{v\}$;
   \item  $\beta_1(v'')\geq r-1$, where $v''\in \mathcal{T}_e\setminus\{v\}$.
\end{itemize}
Then by Lemma~\ref{cutvertex}, the incidence $(v, e)$ must occur before $(v'', e)$ in the subsequence $\{(u, f), \ldots, (v'', e)\}$ of $\mathcal{Z}(v, e)$, which is not possible because $\mathcal{Z}(v, e)$ is a proper toppling sequence of $(v, e)$ in $\mathcal{Z}$.
\end{proof}

Thus we may assume that  $\mathcal{Z}(v, e)=\{(v, e),   (v_2, e),   \ldots, (v_r, e)\}$, where $e=\{v,  v_2,    \ldots,  v_r\}$. Let $\gamma$ and $\gamma'$ be the configurations before and after $\mathcal{Z}(v, e)$ in $\mathcal{Z}$, respectively. Then $\gamma'=\gamma-Bz$, where $z$ is the representative vector of the toppling subsequence $\mathcal{Z}(v, e)$ of $\mathcal{Z}$. However, it is easy to see that $z=j_e$, and so $\gamma'=\gamma-0=\gamma$. Since $C$ is a directed cycle of $\mathfrak{D}$, $\mathcal{Z}(v, e)=\mathcal{Z}$ and $C$ is a cycle of length $r$ determined by the edge $e$.
\end{proof}

\begin{remark}
For each toppled digraph $\mathfrak{D}_i$ of $\mathcal{T}$, the cycle structure of $\mathfrak{D}_i$ and thus $\phi_{\mathfrak{D}_i}(\lambda)$ varies depending on the ordering of vertices in $V(\mathcal{T})\setminus\{i\}$. However, by Proposition~\ref{prop_GCD}, we know that the characteristic polynomial of $r$-tree $\mathcal{T}$ is the greatest common divisor of $\phi_{\mathfrak{D}_0}(\lambda), \phi_{\mathfrak{D}_1}(\lambda), \ldots, \phi_{\mathfrak{D}_n}(\lambda)$, which is invariant of the particular selection for each $\mathfrak{D}_i$. By Lemma~\ref{cyclebyedge}, the cycle structure of  a toppled digraph of $\mathcal{T}$ with a good ordering  is more straightforward and is better understood.  For our purposes, we always assume that the uniform hypertree under consideration is equipped with a good ordering, which helps simplify the analysis on the characteristic polynomial of a toppled digraph.
\end{remark}

We now state and prove one of our main observations.

\begin{theorem} \label{thm_main_charpoly}
Let $\mathcal{T}$ be an $r$-tree of size $m$ with $r\geq 2$.  Then
\[
\phi_{\mathcal{T}}(\lambda)=\prod_{H  \sqsubseteq \mathcal{T}}\varphi_{H}(\lambda)^{a_{H}}\cdot (\mathrm{extraneous\,\, factor})
\]
where the product is over all connected   subgraphs $H$ of $\mathcal{T}$. The exponent $a_{H}$  of the factor $\varphi_{H}(\lambda)$ can be written as
\[
a_H=b^{m-e(H)-|\partial(H)|}c^{e(H)}(b-c)^{|\partial(H)|},
 \]
where $e(H)$ is the size of $H$, $\partial(H)$ is the boundary of  $H$, and $b=(r-1)^{r-1}, c=r^{r-2}$.
\end{theorem}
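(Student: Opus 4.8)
The plan is to compute the characteristic polynomial of the toppled digraph $\mathfrak{D}$ of $\mathcal{T}$ equipped with a good ordering, and then transport the result back through resultant theory. By the Observation in Section~3 we have $D_n=\phi_{\mathfrak{D}}(\lambda)$, and since $D_n=\mathrm{Res}(F_0,\dots,F_n)\cdot(\text{extraneous factor})=\phi_{\mathcal{T}}(\lambda)\cdot(\text{extraneous factor})$ (equivalently $\phi_{\mathcal{T}}(\lambda)=D_n/D_n'$ by Proposition~\ref{prop_ratiodet}), it suffices to compute $\phi_{\mathfrak{D}}(\lambda)$. As the characteristic polynomial of a digraph is the product of those of the principal submatrices indexed by its strong components, and each singleton strong component contributes a factor $\lambda$, the task splits into: (i) classifying the nontrivial strong components of $\mathfrak{D}$; (ii) computing each of their characteristic polynomials; and (iii) counting how many there are of each isomorphism type. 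The leftover factors (the power of $\lambda$ from the non-recurrent configurations, together with $D_n'$) are absorbed into the extraneous factor.

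For (i) and (ii) I would generalize Lemma~\ref{lem_representative-digraph}. Given a recurrent configuration $\alpha$, let $H(\alpha)$ be the subgraph of $\mathcal{T}$ induced by the edges that are toppled in some (hence, by Lemma~\ref{lem_oncetopple}, any) toppling sequence from $\alpha$ back to itself. Running the hop/contraction analysis of Lemma~\ref{lem_representative-digraph} with $H(\alpha)$ in the role of $\mathcal{T}$, and using that every vertex outside $V(H(\alpha))$ stays stable throughout (the active vertex at each step lies in $V(H(\alpha))$, which in the good ordering is always ``ahead'' of the frozen part), one shows that $H(\alpha)$ is connected and the strong component $C_\alpha$ containing $\alpha$ is isomorphic to the representative digraph $\vec{H}$ of $H:=H(\alpha)$, via the analogue of the bijection $\pi$ and of Claim~5 in that proof (each edge of $H$ determines exactly one $r$-cycle in $C_\alpha$). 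Then $\phi_{C_\alpha}(\lambda)=\varphi_{H}(\lambda)$: by Proposition~\ref{prop_charpoly_digraph} its coefficients are signed counts of vertex-disjoint unions of directed cycles of $C_\alpha$; by Lemma~\ref{cyclebyedge} these are $r$-cycles determined by edges of $H$, one per edge, and since $e_i\cap e_j=c_{e_i}\cap c_{e_j}$ a vertex-disjoint family of $k$ of them is exactly a $k$-matching of $H$, with sign $(-1)^k$, matching the coefficient pattern of $\varphi_{H}$ in \eqref{e-matchingpoly}.

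The combinatorial heart, and the step I expect to be the main obstacle, is (iii): showing that exactly $a_{H}$ strong components are isomorphic to $\vec{H}$. Each such component should contain a unique distinguished (``$H$-critical'') configuration $\gamma^{\ast}$, whose restriction to $V(H)$ is a critical configuration of $H$ viewed as a standalone $r$-tree and at which the root of $H$ is about to topple while everything larger is stable; counting these components then amounts to counting such $\gamma^{\ast}$, and this count factors over the three edge-classes. For $e\in E(H)$, $((r-1)\one-\gamma^{\ast})|_{\hat{e}}$ must be a parking function of length $r-1$, giving $c=r^{r-2}=|\mathcal{P}_{r-1}|$ choices by Lemma~\ref{prop_parkingfunc}; for an edge $e$ with $e\cap V(H)=\emptyset$ the relevant values form an arbitrary stable assignment, giving $b=(r-1)^{r-1}$; and for a boundary edge $e\in\partial(H)$ the relevant values are stable but must avoid the parking-function-complement pattern that would switch on the $r$-cycle at $e$ and thereby enlarge the active region, giving $b-c$. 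Since these classes partition $E(\mathcal{T})$ into $e(H)$, $m-e(H)-|\partial(H)|$ and $|\partial(H)|$ edges, we get $a_{H}=b^{m-e(H)-|\partial(H)|}c^{e(H)}(b-c)^{|\partial(H)|}$. The delicate points are verifying that each choice of local data patches to an honest degree-$d$ configuration whose strong component is exactly $\vec{H}$ — the values outside $V(H)$, and especially the boundary edge on the path from $\mathcal{T}$'s root to $H$ when the root is not in $V(H)$, require care, using that toppling into that edge is one-way and never returns — and that distinct local data give distinct components. Assembling yields $\phi_{\mathfrak{D}}(\lambda)=\lambda^{s}\prod_{H\sqsubseteq\mathcal{T}}\varphi_{H}(\lambda)^{a_{H}}$ for some $s\geq 0$; dividing by the extraneous factor gives the theorem. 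For $r=2$ one has $b=c=1$, so $a_{H}=0$ unless $\partial(H)=\emptyset$, i.e. $H=\mathcal{T}$ with $a_{\mathcal{T}}=1$, recovering $\phi_{\mathcal{T}}(\lambda)=\varphi_{\mathcal{T}}(\lambda)$ and Theorem~\ref{forestcharmatchpoly}.
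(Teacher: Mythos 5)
Your combinatorial ingredients are the right ones --- representative digraphs, $r$-cycles in bijection with edges, parking-function counts giving $c=r^{r-2}$ per edge of $H$, $b-c$ per boundary edge, $b$ elsewhere --- and you correctly identify the counting of components as the heart of the matter. But the overall architecture has a genuine gap at the last step, and it is a gap of direction. You propose to compute $\phi_{\mathfrak{D}}(\lambda)=D_n$ exactly and then ``divide by the extraneous factor.'' Since $D_n=\phi_{\mathcal{T}}(\lambda)\cdot D_n'$, establishing $D_n=\lambda^{s}\prod_H\varphi_H(\lambda)^{a_H}$ only tells you that $\phi_{\mathcal{T}}(\lambda)$ \emph{divides} $\lambda^{s}\prod_H\varphi_H(\lambda)^{a_H}$; the theorem asserts the opposite divisibility, namely that $\prod_H\varphi_H(\lambda)^{a_H}$ divides $\phi_{\mathcal{T}}(\lambda)$. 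Without controlling which factors of $D_n$ are eaten by $D_n'$ (and $D_n'$ is not simply a power of $\lambda$: strong components do contain non-reduced monomials, e.g.\ configurations with two unstable vertices arising mid-toppling), you cannot pass from a factorization of $D_n$ to one of $\phi_{\mathcal{T}}$. The paper circumvents this entirely: it proves only the lower bound --- that $\prod_H\varphi_H(\lambda)^{a_H}$ divides $\phi_{\mathfrak{D}_i}(\lambda)$ for \emph{every} choice of root $i$ (the construction of the special configurations is root-independent) --- and then invokes Proposition~\ref{prop_GCD}, $\mathrm{Res}=\pm\mathrm{GCD}(D_0,\ldots,D_n)$, to conclude that the product divides the resultant. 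Your argument uses a single toppled digraph and never touches the GCD characterization, so it cannot close.

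A secondary consequence of this architectural choice is that your plan is also strictly harder than necessary where it is hardest. To compute $\phi_{\mathfrak{D}}$ exactly you must classify \emph{all} nontrivial strong components and show there are \emph{exactly} $a_H$ of type $\vec{H}$; the paper only needs to exhibit $a_H$ configurations (those satisfying the conditions in \eqref{conditionalphaH}) lying in pairwise distinct strong components, each isomorphic to $\vec{H}$ --- a lower bound. Your ``each component contains a unique distinguished $H$-critical configuration $\gamma^{\ast}$'' is asserted rather than proved, and the genuinely delicate point --- showing that once a boundary edge of $H$ is toppled the process can never return to $\alpha$, which the paper handles by a careful case analysis on upper versus lower boundary edges using Lemma~\ref{cutvertex}, Lemma~\ref{cyclebyedge} and the parking-function criterion of Lemma~\ref{prop_parkingfunc} --- is compressed into ``toppling into that edge is one-way and never returns,'' which is precisely what needs proof. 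I would recommend restructuring around the divisibility-into-every-$D_i$ plus GCD argument; your counting of the three edge classes then slots in essentially unchanged.
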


\begin{proof}
Suppose the vertex set of $\mathcal{T}$ is denoted by $[n]_0$ with a good ordering according to the distance partition from the root 0. The variables are ordered consistently with the vertices as $x_i>x_j$ whenever $i>j$. Suppose the toppled digraph with respect to the root 0 is denoted by  $\mathcal{D}$. Now we prove that
\[
\prod_{H  \sqsubseteq \mathcal{T}}\varphi_{H}(\lambda)^{a_{H}}\,\,\,\mathrm{divides}\,\,\, \phi_{\mathcal{D}}(\lambda).
\]

To verify the equation above, choose an arbitrary critical configuration $\alpha$  on $\mathcal{T}$, and consider the strong component $C_{\alpha}$ of $\mathcal{D}$ containing $\alpha$. By Lemma~\ref{lem_representative-digraph}, $C_{\alpha}$   is isomorphic to a representative digraph of $\mathcal{T}$, and then   assume that
\begin{equation}\label{polystrongcomp}
  \phi_{C_{\alpha}}(\lambda)=\lambda^{n+1}+a_1\lambda^{n}+\cdots+a_{n+1}.
\end{equation}
By Proposition~\ref{prop_charpoly_digraph}, we have that  $a_i= \sum_{L\in \mathfrak{L}_i}(-1)^{p(L)}$, where $\mathfrak{L}_i$ is the set of  all linear directed subgraphs $L$ of $C_{\alpha}$ with exactly $i$ vertices and $p(L)$ denotes the number of components of $L$.

From the proof of Lemma~\ref{lem_representative-digraph}, it follows that   any directed cycle in $C_{\alpha}$ has length $r$ and is determined by some edge $e$ of $\mathcal{T}$,  denoted by $c_e$. More precisely, there is a one-to-one correspondence between all cycles of  $C_{\alpha}$ and all edges of $\mathcal{T}$ and this correspondence preserves intersection, i.e.
$c_{e}\cap c_{f}=e\cap f$, for any $e,  f\in E(\mathcal{T})$. Because   each cycle of $C_{\alpha}$ has length $r$, $a_i\neq 0$ in \eqref{polystrongcomp} only if $i$ is multiple of $r$, and so
 \[\phi_{C_{\alpha}}(\lambda)=\sum_{j \geq 0}a_{rj}\lambda^{n+1-rj}, \]
 where $a_{rj}$ is the number of  $j$ mutually disjoint $r$-cycles of  $C_{\alpha}$, which in turn is equal to the  number of $j$-matchings in
 $\mathcal{T}$, or  $m_{\mathcal{T}}(j)$.

  Thus
\[
\phi_{C_{\alpha}}(\lambda)=\sum_{j \geq 0}a_{rj}\lambda^{n+1-rj}=\sum_{j \geq 0}m_{\mathcal{T}}(j)\lambda^{n+1-rj}=\varphi_{\mathcal{T}}(\lambda).
\]
Now we count all such critical configurations $\alpha$ on $\mathcal{T}$. Recall that for any critical configuration $\alpha$ on $\mathcal{T}$, we have that $((r-1)\one-\alpha)|_{\hat{e}}$ is a parking function of length $r-1$ for every $e\in E(\mathcal{T})$, and all $\alpha|_{\hat{e}}$ can be chosen randomly and independently in the set $\{\gamma \,\,:\,\, (r-1)\one-\gamma\in \mathcal{P}_{r-1}\}$, of which the cardinality is  $r^{r-2}$.
 Hence there are $r^{(r-2)m}$ such critical configurations on $\mathcal{T}$ which belong to $S_{0}$, and each belongs to a different strong component of $\mathcal{D}$ by Claim 2 in Lemma~\ref{lem_representative-digraph}.
 Since  $\phi_{\mathcal{D}}(\lambda)$ is a product of the characteristic polynomials of the strong components of $\mathcal{D}$,  we have $(\phi_{C_{\alpha}}(\lambda))^{r^{(r-2)m}}|\phi_{\mathcal{D}}(\lambda)$, and then conclude that
\[
\varphi_{\mathcal{T}}(\lambda)^{r^{(r-2)m}}\,\,\,\mathrm{divides}\,\,\, \phi_{\mathcal{D}}(\lambda).
\]

Choose arbitrarily a fixed connected   subgraph $H$ of  $\mathcal{T}$. Assume that $u$ is the least vertex in $H$ with respect to the ordering on $V(\mathcal{T})$, and select $u$ as the root of $H$.  Consider the boundary $\partial(H)$ of $H$. [Note that $\partial(H)=\emptyset$ when $H=\mathcal{T}$ and $\partial(H)=E_u$ when $H$ consists of a single vertex $u$.]

Now consider a connected proper subgraph $H$ of  $\mathcal{T}$. Then   $\partial(H) \neq\emptyset$.  Assume that  $\partial(H)=\{e_1, \ldots, e_h\}$, where $h$ is a positive integer. Each boundary edge $e_i$   contains a unique common vertex with    $H$,   called a \textit{boundary vertex} of $H$ and denoted by $v_i$. Set $\hat{e}_i=e_i\backslash \{v_i\}$.  Suppose we select $u$ as the root of $\mathcal{T}$ and consider the good ordering determined by the distance partition from the new root $u$. Then $v_i$ becomes the root of $e_i$ for $i=1, \ldots, h$. Observe then that the root of each edge in $H$ remains unchanged according to this new good ordering on $\mathcal{T}$.

Consider any configuration $\alpha$ on $\mathcal{T}$ satisfying the following conditions:
\begin{align}\label{conditionalphaH}
((r-1)\one-\alpha)|_{\hat{e}}\in \mathcal{P}_{r-1},\quad &  \forall e\in E(H);\nonumber \\
  0<((r-1)]\one-\alpha)|_{\hat{e}_i}\notin \mathcal{P}_{r-1},\quad & \forall  e_i\in\partial(H); \\
\alpha(v)\in\{0, 1, \ldots, r-2\},\quad & \forall v\notin H\cup \partial(H). \nonumber
\end{align}

From the form of $\alpha$, we know that  $\alpha(w)<r-1$ for any $w\neq u$.  For any $\beta\in\mathcal{D}$,   consider    directed paths in $\mathcal{D}$ from  $\alpha$ to $\beta$ when  $\beta$  is reachable from $\alpha$.  There are two cases to consider.

\textbf{ Case 1.} There exists a  directed path in $\mathcal{D}$ from  $\alpha$ to $\beta$ whose toppling sequence only uses edges from $H$, and denote by $U$ the set of all such $\beta$. Note that $\alpha\in U$. From \eqref{conditionalphaH} and the choice of $H$, we know that
\begin{itemize}
  \item $((r-1)\one-\alpha)|_{\hat{e}}$ is a parking function for every $e\in E(H)$.
  \item  $\alpha(w)<r-1$ for every $w\notin H$.
  \item  $\alpha\in S_u$ and $u$ is the root of $H$.
\end{itemize}

 Adopting the same method as in Lemma~\ref{lem_representative-digraph}, we can show that
 \begin{itemize}
 \item The mapping $\pi: U\rightarrow V(H)$ defined by $\pi(\beta)=i$ for $\beta\in S_i$ is a bijection. Further, $\pi^{-1}(v)|_{\hat{e}}=\alpha|_{\hat{e}}$ when $v\in H$ is the root of $e\in E(H)$.
     \item  Any directed cycle in $\mathcal{D}[U]$ has length   $r$ and is determined by a fixed edge of $H$.  Each edge of $H$   determines one and  only one directed cycle of length $r$ in $\mathcal{D}[U]$.
    \end{itemize}
Then the   subgraph of $\mathcal{D}$ induced by $U$ is isomorphic to a representative digraph of $H$.

\textbf{ Case 2.} Any directed path from  $\alpha$ to $\beta$ in $\mathcal{D}$ topples inside an edge not in  $E(H)$.
  Then one of the  edges in $\partial(H)$ must be used in any  directed path in $\mathcal{D}$ from  $\alpha$ to $\beta$.  This can be shown by contradiction. Consider  a  directed path in $\mathcal{D}$ from  $\alpha$ to $\beta$, say
\begin{equation}\label{alphabetapath}
  \alpha\stackrel{(i_1,f_1)}{\longrightarrow} \alpha_1\rightarrow\cdots  \stackrel{(i_{s-1},f_{s-1})}{\longrightarrow}  \alpha_{s-1}   \stackrel{(i_s, f_s)}{\longrightarrow} \alpha_s\rightarrow\cdots \rightarrow\beta,
\end{equation}
and assume that $f_s$ is the first one not in $E(H)$ for this toppling sequence. Suppose that $f_s\notin \partial(H)$.  Note that $i_1=u$ and then $f_1\in E_{u}\setminus\partial(H)$, which implies that $s\geq2$. By the choice of $f_s$, we know that $f_1, \ldots, f_{s-1}\in E(H)$.
Since $\alpha\in S_u$ and it is reducible, all possible unstable vertices with respect to $\alpha_{s-1}$ must be in   $f_1\cup \cdots\cup f_{s-1}$. By assumption, $\alpha_{s-1}\in S_{i_s}$ and so $i_s\in f_1\cup \cdots\cup f_{s-1}$. However, $i_s\in f_s$ and $f_s\notin E(H)\cup\partial(H)$ imply that $i_s\notin H$, a contradiction. Therefore   $f_s\in\partial(H)$, say  $f_s=e_1$ without loss of generality.  Since $f_1, \ldots, f_{s-1}\in E(H)$, we know that $\alpha_{s-1}\in U$ and $\alpha_{s-1}|_{\hat{e}_{1}}=\alpha|_{\hat{e}_{1}}$.

Suppose by contradiction that $\alpha$ is reachable from $\beta$. Together with \eqref{alphabetapath}, $\alpha$ and $\beta$ are strongly connected and so  there is a cycle $C$ containing $\alpha_{s-1}$, given by
\begin{equation}\label{r-cycle}
 \alpha_{s-1}   \stackrel{(v_1, e_1)}{\longrightarrow} \alpha_s\rightarrow\cdots \rightarrow \alpha_{s-1}.
\end{equation}
By Lemma~\ref{cyclebyedge},  $C$ is a cycle of length $r$ and is determined by the edge $e_1$.

If $e_{1}$ is a lower edge for $u$, then $\alpha_{s-1}|_{\hat{e}_1}=\alpha|_{\hat{e}_1}$ is a stable configuration on the complete 2-graph $K_r$ for chip-firing game, where $K_r$  has $e_1$ as its vertex set and the boundary vertex $v_1$  as its root.
 From \eqref{r-cycle}, $\alpha_{s-1}|_{\hat{e}_1}$ is also a recurrent configuration, and hence  is a critical configuration on $K_r$. By Lemma~\ref{prop_parkingfunc}, $((r-1)\one-\alpha_{s-1})|_{\hat{e}_{1}}$ must be  a parking function, which contradicts  the requirement in \eqref{conditionalphaH}.

If $e_{1}$ is an upper edge for $u$,  then $v_1=u$, which is not the lowest labelled vertex in $e_1$,  and $e_1$ is the only upper edge for $u$ in $E_u$.  Assume that $e_{1}=\{u_{1}, u_2,\ldots, u_{r}\}$ and $u_r$  is the least vertex in $e_1$. Then $v_1\in\{u_{1},  \ldots, u_{r-1}\}$.  
Now consider the labelled complete 2-graph $K_r$ on the vertex set $\{u_{1}, u_2,\ldots, u_{r}\}$    rooted at $u_r$.
From \eqref{r-cycle},  $\alpha_{s-1}|_{e_1\setminus\{u_r\}}$ is a recurrent configuration on $K_r$,  by   Theorem~\ref{confluenceroot},  after toppling some incidences determined by $e_1$ from  $\alpha_{s-1}$,      a unique critical configuration on $K_r$, say $\gamma$,  will be obtained. Further by Lemma~\ref{prop_parkingfunc},  $((r-1)\one-\gamma)|_{e_{1}\setminus\{u_r\}}$ is  a parking function. Note that $\gamma$ can result in $\alpha_{s-1}$ after completing the topplings along the remaining incidences determined by $e_1$.
Since  $((r-1)\one-\gamma)|_{e_{1}\setminus\{u_r\}}$ is  a parking function,  we may suppose that
\[
\gamma(u_r)\geq r-1, \quad \gamma(u_{1})\geq r-2, \cdots, \gamma(u_{r-2})\geq 1, \quad \gamma(u_{r-1})\geq 0.
\]
After toppling the incidence $(u_r, e_{1})$, $\gamma$ becomes a configuration, say $\gamma'$, which satisfies
 \[
\gamma'(u_r)\geq 0, \quad \gamma'(u_1)\geq r-1, \cdots, \gamma'(u_{r-2})\geq 2, \quad \gamma'(u_{r-1})\geq 1.
\]
Continuing with the remaining topplings from $\gamma$ to $\alpha_{s-1}$, we have that $\alpha_{s-1}(v_{1})\geq r-1$ and with respect to  $\alpha_{s-1}$, one of $\hat{e}_{1}$ is  $\geq 0$, one of $\hat{e}_{1}$ is  $\geq 1$, \ldots, one of $\hat{e}_{1}$ is  $\geq r-2$.
As $\alpha_{s-1}(a)<r-1$ for all $v_1\neq a\in e_1$, this together with the   above conclusion implies that $((r-1)\one- \alpha_{s-1})|_{\hat{e}_{1}}$ is  a parking function,  which contradicts  the requirement in \eqref{conditionalphaH}.  Therefore  $\alpha$ is not  reachable from   $\beta$.

 Based on the analysis of the two cases above,  we conclude that the strong component $C_{\alpha}$   is isomorphic to a representative digraph of  $H$ and hence $\phi_{C_{\alpha}}(\lambda)=\varphi_{H}(\lambda)$.  Further from the construction of $\alpha$, we have
 \begin{itemize}
  \item $\alpha|_{\hat{e}}$ has $r^{r-2}$ choices, for each $e\in E(H)$;
  \item $\alpha|_{\hat{e}}$ has $(r-1)^{r-1}-r^{r-2}$ choices, for each $e\in \partial(H)$;
  \item $\alpha|_{\hat{e}}$ has $(r-1)^{r-1}$ choices, for each $e\notin E(H)\cup\partial(H)$.
\end{itemize}

Since $\alpha|_{\hat{e}}$ can be chosen  independently,   there are $a_H$ such configurations $\alpha$ on $\mathcal{T}$  all lying in different strong components of $\mathcal{D}$ as they all belong to $S_{u}$, and
$$a_H=((r-1)^{r-1})^{m-e(H)-|\partial(H)|}(r^{r-2})^{e(H)}(r-1)^{r-1}-r^{r-2})^{|\partial(H)|}.$$  Therefore, we have
$\varphi_{H}(\lambda)^{a_H}\,\,\,\mathrm{divides}\,\,\, \phi_{\mathcal{D}}(\lambda).$

 For any two connected  subgraphs $H_1, H_2$ of $\mathcal{T}$, let $\alpha_i$   be any  configuration  on $\mathcal{T}$ satisfying the requirement \eqref{conditionalphaH} with $H$ replaced by $H_i$, for $i=1, 2$.   Now we show that $H_1\neq H_2$ implies that $C_{\alpha_1}\neq C_{\alpha_2}$. Since $H_1\neq H_2$, either  $H_1\setminus H_2\neq\emptyset$ or $H_2\setminus H_1\neq\emptyset$ and, without loss of generality, assume that there is a vertex, say $u_1$ in $H_1\setminus H_2$. Consider the bijection as in Case 1. There is a configuration $\beta\in C_{\alpha_1}$ such that $\beta(u_1)\geq r-1$ and $\gamma(u_1)< r-1$ for any $\gamma\in C_{\alpha_2}$, which implies that  $\beta\notin C_{\alpha_2}$. Therefore we arrive at
 \[
\prod_{H  \sqsubseteq \mathcal{T}}\varphi_{H}(\lambda)^{a_{H}}  \,\,\,\mathrm{divides}\,\,\,  \phi_{\mathcal{D}}(\lambda).
\]
Observe that the above conclusion does not depend on the choice of the root of $\mathcal{T}$. Therefore  $\prod_{H \sqsubseteq \mathcal{T}}\varphi_{H}(\lambda)^{a_{H}}$ is a common divisor of the characteristic polynomials of the toppled digraphs and hence divides their greatest common divisor, which is the characteristic polynomial of $\mathcal{T}$,  by Proposition~\ref{prop_GCD}.
\end{proof}

In fact, it can be shown that the extraneous factor is equal to one in the above theorem, and hence we conclude the section with our main result.

\begin{theorem}\label{thm_main}
For any $r$-tree $\mathcal{T}$ of size $m$ with $r\geq2$,
\begin{equation}\label{mainidentity}
\phi_{\mathcal{T}}(\lambda)= \prod_{H \sqsubseteq \mathcal{T}}\varphi_{H}(\lambda)^{a_{H}},
\end{equation}
where the product runs over all connected  subgraphs $H$ of $\mathcal{T}$, and
\[
a_H=b^{m-e(H)-|\partial(H)|}c^{e(H)}(b-c)^{|\partial(H)|},
 \]
with $e(H)$ being the size of $H$, $\partial(H)$ being the boundary of  $H$, and $b=(r-1)^{r-1}, c=r^{r-2}$.
\end{theorem}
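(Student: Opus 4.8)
The plan is to prove that the extraneous factor in Theorem~\ref{thm_main_charpoly} is a monic polynomial of degree zero, so that it equals $1$. Write $|V(\mathcal{T})|=n+1$, so $n=(r-1)m$ by Lemma~\ref{hypertree-order-size}. Recall (see e.g.\ \cite{CoopDut12}) that the characteristic polynomial of an $r$-graph on $N$ vertices is monic of degree $N(r-1)^{N-1}$; thus $\phi_{\mathcal{T}}(\lambda)$ is monic of degree $(n+1)(r-1)^{n}$. On the other hand, $\varphi_H(\lambda)=\sum_{k\ge 0}(-1)^{k}m_H(k)\lambda^{|H|-kr}$ is monic of degree $|H|$, so $\prod_{H\sqsubseteq\mathcal{T}}\varphi_H(\lambda)^{a_H}$ is monic of degree $\sum_{H\sqsubseteq\mathcal{T}}a_H|H|$. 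By Theorem~\ref{thm_main_charpoly} this product divides $\phi_{\mathcal{T}}(\lambda)$, so the extraneous factor is the quotient $\phi_{\mathcal{T}}(\lambda)\big/\prod_{H\sqsubseteq\mathcal{T}}\varphi_H(\lambda)^{a_H}$, a monic polynomial. Hence it suffices to establish the degree identity
\[
\sum_{H\sqsubseteq\mathcal{T}}a_H\,|H|=(n+1)(r-1)^{n}.
\]

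Since $b^{m}=(r-1)^{(r-1)m}=(r-1)^{n}$, the right-hand side is $(n+1)b^{m}$. Using $|H|=\sum_{v\in V(\mathcal{T})}[\,v\in V(H)\,]$ and interchanging the order of summation, the identity reduces to a per-vertex statement which I would isolate as a lemma: for every vertex $v$ of $\mathcal{T}$,
\[
\sum_{\substack{H\sqsubseteq\mathcal{T}\\ v\in V(H)}}a_H=b^{m}.
\]
It helps to first record the multiplicative reformulation $a_H=\prod_{e\in E(\mathcal{T})}\ell_H(e)$, where $\ell_H(e)=c$ if $V(e)\subseteq V(H)$, $\ell_H(e)=b-c$ if $e$ is a boundary edge of $H$, and $\ell_H(e)=b$ if $V(e)\cap V(H)=\emptyset$; here one uses the easy observation that, because $\mathcal{T}$ is a tree, $V(e)\subseteq V(H)$ holds exactly when $e\in E(H)$.

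I would prove the per-vertex lemma by induction on $m=e(\mathcal{T})$; the cases $m\le 1$ are immediate from the formula for $a_H$. For $m\ge 2$, fix a pendant edge $e_0$ with its unique non-leaf vertex $w$, and put $\mathcal{T}'=\mathcal{T}-e_0$, an $r$-tree with $m-1$ edges. Fixing $v\in V(\mathcal{T})$, split the connected subgraphs $H$ of $\mathcal{T}$ with $v\in V(H)$ according to whether $e_0\in E(H)$. Deleting $e_0$ identifies the subgraphs with $e_0\in E(H)$ bijectively with the connected subgraphs $H'$ of $\mathcal{T}'$ containing $w$ (gluing $e_0$ back at $w$; here $H'=\{w\}$ corresponds to $H=e_0$, which is why there is no overcounting), while the subgraphs with $e_0\notin E(H)$ are the connected subgraphs of $\mathcal{T}'$ containing $v$, together with the single subgraph $\{v\}$ when $v$ is a leaf of $e_0$ (in which case there are no others). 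Because the $r-1$ leaves of $e_0$ have degree $1$, passing between $H$ and $H'$ changes only the label of $e_0$ and leaves every other label unchanged, so $a_H^{\mathcal{T}}=\ell_H(e_0)\cdot a_{H'}^{\mathcal{T}'}$ with $\ell_H(e_0)\in\{c,\,b-c,\,b\}$ determined by the positions of $v$ and $w$ relative to $H$. Treating the three cases $v\notin e_0$, $v\in e_0\setminus\{w\}$, $v=w$, each total collapses to $b^{m}$ after invoking the induction hypothesis $\sum_{H'\sqsubseteq\mathcal{T}',\,v\in V(H')}a_{H'}^{\mathcal{T}'}=b^{m-1}$ (and the same with $w$); for instance when $v\notin e_0$ the subgraphs through $w$ contribute weight $b-c$ when $e_0\notin E(H)$ and $c$ when $e_0\in E(H)$, which combine to $b$, giving $b\cdot b^{m-1}$. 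Summing the lemma over the $n+1$ vertices yields $\sum_H a_H|H|=(n+1)b^{m}=(n+1)(r-1)^{n}$, as needed.

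I expect the main obstacle to be the bookkeeping in the inductive step: setting up the two bijections between subgraphs of $\mathcal{T}$ and of $\mathcal{T}'$ without double counting (the delicate point being that $e_0\cup\{w\}$ and $e_0$ are the same subgraph), and checking in each of the three cases which of $c$, $b-c$, $b$ the label $\ell_H(e_0)$ takes so that the sums telescope correctly. Everything else is routine; in particular, for $r=2$ one has $b=c=1$, so $a_H=0$ for every $H\neq\mathcal{T}$ and $a_{\mathcal{T}}=1$, and the identity of Theorem~\ref{thm_main} reduces to Theorem~\ref{forestcharmatchpoly}.
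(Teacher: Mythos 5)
Your proposal is correct, and it shares the paper's overall frame: both arguments take the factorization of Theorem~\ref{thm_main_charpoly}, note that $\prod_{H}\varphi_H(\lambda)^{a_H}$ and $\phi_{\mathcal{T}}(\lambda)$ are both monic, and conclude by showing the two degrees agree, so that the extraneous factor is the constant $1$. Where you genuinely diverge is in how the degree identity is established. The paper reads off $\deg\phi_{\mathcal{T}}$ as the number of reduced monomials of total degree $d$ (via Proposition~\ref{prop_ratiodet}) and then builds a map $\sigma$ sending each reduced monomial in $S_i$ to the maximal connected subgraph through $i$ whose edges all carry parking-function restrictions, verifying $|\sigma^{-1}(H)|=a_H|V(H)|$; this keeps everything inside the resultant/toppling machinery and recycles the configuration counts from the proof of Theorem~\ref{thm_main_charpoly}. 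You instead import the closed-form degree $(n+1)(r-1)^{n}$ of the characteristic polynomial of an order-$r$, dimension-$(n+1)$ tensor and prove the purely combinatorial identity $\sum_{H\ni v}a_H=b^{m}$ for each vertex $v$ by deleting a pendant edge and splitting into the three cases for the location of $v$. I checked the details: the multiplicative reformulation of $a_H$ is legitimate because in a hypertree an edge with all its vertices in a connected subgraph must be an edge of that subgraph, the two bijections with subgraphs of $\mathcal{T}'$ are set up correctly (including the degenerate cases $H=e_0$ and $H=\{v\}$ for $v$ a leaf of $e_0$), and in each case the sums collapse to $b\cdot b^{m-1}$ via the cancellation $(b-c)+c=b$. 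Your route is more self-contained on the combinatorial side, never mentioning reduced monomials or parking functions, at the price of citing the external degree formula for the resultant; the paper's route avoids that citation but asks the reader to digest the weighted correspondence $\sigma$. Both are complete proofs.
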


\begin{proof}
For convenience, denote by $f(\lambda)$ the polynomial in the right side of \eqref{mainidentity}, i.e. $f(\lambda):=\prod_{H \subseteq \mathcal{T}}\varphi_{H}(\lambda)^{a_{H}}$.
By Theorem~\ref{thm_main_charpoly}, it suffices to show that $f(\lambda)$ has the same degree as $\phi_{\mathcal{T}}(\lambda)$ since the former is a factor of the latter, and both are monic. By Proposition~\ref{prop_ratiodet}, we know the degree of the characteristic polynomial $\phi_{\mathcal{T}}(\lambda)$ is equal to the number of reduced monomials of total degree $d$. Denote by $X$ the set consisting of all reduced monomials of total degree $d$. Next we verify that the degree of  $f(\lambda)$ is $|X|$.

For any $x^{\alpha}\in X$, $\alpha$ is a  configuration on $\mathcal{T}$, and assume that $x^{\alpha}\in S_i$.
 For each edge $e$ of $\mathcal{T}$, denote by $\hat{e}$ the one obtained from $e$ by deleting its root with respect to the distance partition determined by $i$. Consider all maximal components of $\mathcal{T}$ induced by  all such edges $e$ satisfying $((r-1)\one-\alpha)|_{\hat{e}}\in \mathcal{P}_{r-1}$.
  Let $H$ denote  the maximal component containing $i$. If there is no edge $e$ incident to $i$ that satisfies   $((r-1)\one-\alpha)|_{\hat{e}}\in \mathcal{P}_{r-1}$,  then $H$ consists of  single vertex $i$.
Denote by $Y$ the set of all connected subgraphs of $\mathcal{T}$. There is a natural mapping $\sigma: X\rightarrow Y$ defined by $\sigma(x^{\alpha})=H$, which is well-defined from the correspondence above.

Observe that  $0<((r-1)\one-\alpha)|_{\hat{e}}\notin \mathcal{P}_{r-1}$ for any $e\in \partial(H)$ because of the maximality of  $H$.
 Given $H\in Y$ and $v\in H$, there is $a_H$ such $x^{\alpha}$ in $X\cap S_v$ with $\sigma(x^{\alpha})=H$.  Thus  for any $H\in Y$,
\[
|\sigma^{-1}(H)|=a_{H}|V(H)|,
\]
and so
\[
|X|=\sum_{H\in Y}|\sigma^{-1}(H)|=\sum_{H\in Y}|V(H)|a_{H},
\]
where the last term is just the degree of $f(\lambda)$.
\end{proof}

When $r=2$ in Theorem~\ref{thm_main}, $b-c=(r-1)^{r-1}-r^{r-2}=0$ and
\[
(b-c)^{|\partial(H)|}=\left\{
                        \begin{array}{ll}
                          0, & \hbox{if $H\sqsubseteq \mathcal{T}$ and $H\neq \mathcal{T}$;} \\
                          1, & \hbox{if $H=\mathcal{T}$.}
                        \end{array}
                      \right.
\]
Therefore for  $r=2$, the  correspondence in Theorem~\ref{thm_main} reduces to  the classical result Theorem~\ref{forestcharmatchpoly} for ordinary trees.

A basic consequence of  Theorem~\ref{thm_main} is a combinatorial method to compute the characteristic polynomial of hypertrees, which is difficult due to the high complexity of computing the resultant from various techniques in algebraic geometry, and, in addition, this result constructs a strong bridge between an algebraic and a combinatorial  object. Thus it deepens the understanding of possible interactions between these two important polynomials on hypertrees.

\begin{figure}[!hbpt]
\begin{center}
\includegraphics[scale=0.8]{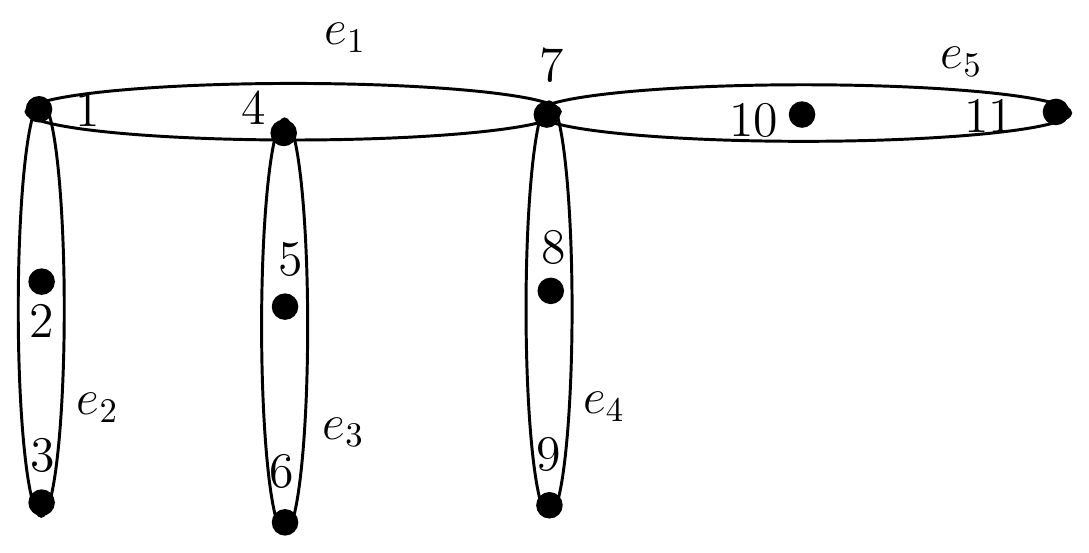}
\caption{The hypergraph $\mathcal{H}$:  3-tree of order 11.}
\end{center}\label{fig_6cycle}
\end{figure}

\begin{example}
We illustrate the conclusion of Theorem \ref{thm_main} with the following example. Take the 3-uniform hypergraph  $\mathcal{H}=([11], E)$ from \cite{ClarkCoop21} see Figure 2, where $E=\{e_1, e_2 ,e_3, e_4, e_5\}$.
 From the computation in  \cite{ClarkCoop21}, it follows that
\begin{align}\label{chpoly}
 \phi_{\mathcal{H}}(\lambda)&=\lambda^{2192}(\lambda^{11}-5\lambda^8+5\lambda^5-2\lambda^2)^{243}(\lambda^9-4\lambda^6+3\lambda^3-1)^{162}
(\lambda^9-4\lambda^6+2\lambda^3)^{162}\nonumber\\
&\qquad (\lambda^7-3\lambda^4+\lambda)^{135}(\lambda^7-3\lambda^4)^{27}(\lambda^5-2\lambda^2)^{180}(\lambda^3-1)^{483}.
\end{align}

\begin{table}[!hbpt]
\begin{center}
 \begin{tabular}{|c|c|c|}
   \hline
  $H\sqsubseteq \mathcal{H}$ & $a_H$ &   $\varphi_{H}(\lambda)$ \\
    \hline
  $\{2\}, \{3\}, \{5\}, \{6\}, \{8\}, \{9\}, \{10\}, \{11\}$ & 256     & $\lambda$\\
   \hline
   $\{1\}, \{4\}$ &64 & $\lambda$\\
   \hline
    $\{7\}$ & 16 &$\lambda$\\
   \hline
    $\{e_2\}, \{e_3\}$ & 192&$\lambda^3-1$\\
   \hline
   $\{e_4\}, \{e_5\}$ & 48 &$\lambda^3-1$\\
   \hline
    $\{e_1\}$ & 3 &$\lambda^3-1$\\
   \hline
    $\{e_1, e_2\}, \{e_1, e_3\}, \{e_1, e_4\}, \{e_1, e_5\}$& 9 &$\lambda^5-2\lambda^2$\\
   \hline
   $\{e_4, e_5\}$ &144 &$\lambda^5-2\lambda^2$\\
   \hline
   $\{e_1, e_2, e_3\}, \{e_1, e_2, e_4\}, \{e_1, e_2, e_5\}, \{e_1, e_3, e_4\}, \{e_1, e_3, e_5\}$ &27 & $\lambda^7-3\lambda^4+\lambda$\\
   \hline
   $\{e_1,  e_4, e_5\}$ &27 & $\lambda^7-3\lambda^4$\\
   \hline
   $\{e_1,  e_2, e_3, e_4\}, \{e_1,  e_2, e_3, e_5\}$ &81 & $\lambda^9-4\lambda^6+3\lambda^3-1$\\
   \hline
   $\{e_1,  e_2, e_4, e_5\}, \{e_1,  e_3, e_4, e_5\}$ &81 & $\lambda^9-4\lambda^6+2\lambda^3$\\
   \hline
    $\mathcal{H}$ &243 &$\lambda^{11}-5\lambda^8+5\lambda^5-2\lambda^2$\\
   \hline
 \end{tabular}
\caption{All connected subgraphs $H$ of $\mathcal{H}$ together with  $a_H$ and  $\varphi_{H}(\lambda)$.}
\end{center}\label{fig_6cycle}
\end{table}
All connected subgraphs $H$ of $\mathcal{H}$ listed in Table 1,  are presented via the vertex induced subsets or edge induced subsets, together with their exponents $a_H$ and matching polynomials. From \eqref{chpoly} and the data in Table 1, it is straightforward to verify that the following equation holds:
\[
\phi_{\mathcal{H}}(\lambda)= \prod_{H  \sqsubseteq \mathcal{H}}\varphi_{H}(\lambda)^{a_{H}},
\]
where the product runs over all connected subgraphs $H$ of $\mathcal{H}$.
\end{example}

\section{Applications}

Let $G=(V,E)$ be an ordinary  graph (i.e. a 2-graph). For every $r\geq3$, the {\em $r$th power }of $G$, denoted by $G^r$, is an $r$-uniform hypergraph with vertex set $V(G^r)=V\cup(\cup_{e\in E}\{i_{e,1},\ldots, i_{e,r-2}\})$ and edge set
  $E(G^r)=\{e\cup\{i_{e,1},\ldots, i_{e,r-2},\}|~e\in E\}$.   Let $P_m$   denote the path  with $m$ edges. The $r$th power of $P_m$, denoted by $P_{m}^r$,  is called a {\em loose path}.

\begin{prop}[\cite{SuKLS}]\label{prop-tree-powertree-matchpoly}
Let $T$ be a 2-tree on $n$ vertices,   $r \, (r\geq3)$ a positive integer. Then the matching polynomials of $T$ and   $T^r$ satisfy the following relation:
\[
\varphi_{T^r}(\lambda)=\lambda^{\frac{(n-2)(r-2)}{2}}\varphi_{T}(\lambda^{\frac{r}{2}}).
\]
\end{prop}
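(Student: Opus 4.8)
The plan is to reduce the whole statement to one elementary bijection between matchings of $T^{r}$ and matchings of $T$, after which the claimed identity is merely a comparison of exponents.

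First I would record the two relevant parameters of $T^{r}$. Since $T$ has $n-1$ edges and $T^{r}$ is obtained by inserting $r-2$ new vertices into each edge, the order of $T^{r}$ is $N:=|V(T^{r})|=n+(n-1)(r-2)$, while $T^{r}$ still has $n-1$ edges. For an edge $e$ of $T$ write $\widehat{e}=e\cup\{i_{e,1},\dots,i_{e,r-2}\}$ for the corresponding edge of $T^{r}$. Because the inserted vertices are pairwise distinct across edges and none of them lies in $V(T)$, we have $\widehat{e}\cap\widehat{f}=e\cap f$ for all edges $e,f$ of $T$; hence a collection of edges of $T^{r}$ is pairwise disjoint precisely when the corresponding collection of edges of $T$ is. This yields, for every $k\ge 0$, a bijection between the $k$-matchings of $T^{r}$ and those of $T$, so that $m_{T^{r}}(k)=m_{T}(k)$ for all $k$.

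With this established I would simply expand both sides using \eqref{e-matchingpoly}. On one hand
\[
\varphi_{T^{r}}(\lambda)=\sum_{k\ge 0}(-1)^{k}m_{T^{r}}(k)\,\lambda^{\,N-kr}=\sum_{k\ge 0}(-1)^{k}m_{T}(k)\,\lambda^{\,N-kr},
\]
and on the other hand, substituting $\lambda^{r/2}$ into $\varphi_{T}$,
\[
\lambda^{\frac{(n-2)(r-2)}{2}}\varphi_{T}\!\left(\lambda^{r/2}\right)=\sum_{k\ge 0}(-1)^{k}m_{T}(k)\,\lambda^{\frac{(n-2)(r-2)}{2}+\frac{r(n-2k)}{2}}.
\]
It then remains to check that the exponents agree term by term, i.e. $N-kr=\frac{(n-2)(r-2)}{2}+\frac{r(n-2k)}{2}$. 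The coefficients of $k$ are both $-r$, so this reduces to the identity $n+(n-1)(r-2)=\frac{(n-2)(r-2)+rn}{2}$, equivalently $2n+2(n-1)(r-2)=(n-2)(r-2)+rn$, and expanding each side gives $2nr-2n-2r+4$, as desired.

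There is no real obstacle here; the only point that calls for a word of care is that when $r$ is odd the factors $\lambda^{(n-2)(r-2)/2}$ and $\varphi_{T}(\lambda^{r/2})$ individually carry half-integer exponents. This is harmless: one works in $\mathbb{Z}[\lambda^{1/2}]$, and the computation above shows that every surviving exponent on the right-hand side equals $N-kr$, which is a nonnegative integer (indeed $N-\nu(T)\,r\ge(n-2)(r-2)/2\ge 0$ since $\nu(T)\le n/2$), so the right-hand side is in fact a polynomial in $\lambda$ and coincides with $\varphi_{T^{r}}(\lambda)$.
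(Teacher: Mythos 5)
Your proof is correct. Note that the paper does not actually prove this proposition---it is quoted from \cite{SuKLS} as a known result---so there is no in-paper argument to compare against; your derivation (the bijection $\widehat{e}\mapsto e$ preserving disjointness, hence $m_{T^{r}}(k)=m_{T}(k)$, followed by matching exponents) is the natural and standard one, and your exponent check $n+(n-1)(r-2)=\frac{(n-2)(r-2)+rn}{2}$ is verified correctly. The remark about half-integer powers of $\lambda$ for odd $r$ cancelling in the product is a worthwhile point of care that you handle properly.
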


 Chen and Bu~\cite{ChenBu} derived   an explicit formula for the characteristic polynomial   of uniform loose paths.  Combining Theorem~\ref{thm_main}, Proposition~\ref{prop-tree-powertree-matchpoly} and Theorem~\ref{forestcharmatchpoly},  the following  result from \cite{ChenBu} can be obtained as a consequence.

\begin{coro}[\cite{ChenBu}]
The characteristic polynomial of the $r$-uniform loose path $P_{m}^r$ of length $m$ is
\[
 \phi_{P_{m}^r}(\lambda)=\prod_{j=0}^m \phi_{P_{j}}(\lambda^{r/2})^{a(j, m)},
\]
where
\[
a(j, m)=\left\{
          \begin{array}{ll}
            K_2^m, & \hbox{j=m;} \\
           ((m-j+1)K_1+2K_2)K_1K_2^j(K_1+K_2)^{m-j-2}, & \hbox{$1\leq j\leq m-1$;} \\
             \frac{2}{r}[m(r-1)+](K_1+K_2)^{m}-\sum_{s=1}^m(s+1)a(s,m), & \hbox{j=0.}
          \end{array}
        \right.
\]
Here $K_1=(r-1)^{r-1}-r^{r-2}$, $K_2=r^{r-2}$ and $\phi_{P_{j}}(\lambda)$ is the characteristic polynomial of the path $P_j$.
\end{coro}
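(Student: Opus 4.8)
The plan is to specialize Theorem~\ref{thm_main} to $\mathcal{T}=P_m^r$, which is precisely the $r$-tree of size $m$ obtained as the $r$th power of the ordinary path $P_m$, and then regroup the factors. The first task is to list the connected subgraphs of $P_m^r$. A connected subgraph with at least one edge is induced by a set of consecutive edges of the underlying path, hence is isomorphic to $P_j^r$ for $j=e(H)\in\{1,\dots,m\}$, and for each such $j$ there are exactly $m-j+1$ copies inside $P_m^r$; a connected subgraph with no edge is a single vertex, and $P_m^r$ has $m(r-1)+1$ of them. Inspecting boundaries, the unique copy with $j=m$ (namely $H=P_m^r$ itself) has $|\partial(H)|=0$; among the $m-j+1$ copies of $P_j^r$ with $1\le j\le m-1$, exactly two (those containing an endpoint of $P_m$) have $|\partial(H)|=1$ and the remaining $m-j-1$ interior copies have $|\partial(H)|=2$. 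Writing $K_2=c=r^{r-2}$ and $K_1=b-c=(r-1)^{r-1}-r^{r-2}$, so $b=K_1+K_2$, the exponent rule $a_H=b^{\,m-e(H)-|\partial(H)|}c^{\,e(H)}(b-c)^{|\partial(H)|}$ of Theorem~\ref{thm_main} then contributes, summed over all copies of $P_j^r$ with fixed $j\le m-1$,
\[
2K_1K_2^{\,j}(K_1+K_2)^{m-j-1}+(m-j-1)K_1^{2}K_2^{\,j}(K_1+K_2)^{m-j-2}
=\bigl((m-j+1)K_1+2K_2\bigr)K_1K_2^{\,j}(K_1+K_2)^{m-j-2}=a(j,m),
\]
while $j=m$ contributes $c^{m}=K_2^{m}=a(m,m)$; this reproduces the stated $a(j,m)$ for $1\le j\le m$.

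The second task is to replace the hypergraph matching polynomials by the path characteristic polynomials in the statement. Applying Proposition~\ref{prop-tree-powertree-matchpoly} to the $2$-tree $T=P_j$ (which has $j+1$ vertices) and then Theorem~\ref{forestcharmatchpoly} (as $P_j$ is a tree) gives $\varphi_{P_j^r}(\lambda)=\lambda^{(j-1)(r-2)/2}\varphi_{P_j}(\lambda^{r/2})=\lambda^{(j-1)(r-2)/2}\phi_{P_j}(\lambda^{r/2})$, where the (possibly half-integer) powers of $\lambda$ are read formally, since after forming the product over $j$ they recombine into honest integer powers. Substituting this and $\varphi_{\{v\}}(\lambda)=\lambda$ for the single-vertex subgraphs into $\phi_{P_m^r}(\lambda)=\prod_{H\sqsubseteq P_m^r}\varphi_{H}(\lambda)^{a_H}$ from Theorem~\ref{thm_main} gives
\[
\phi_{P_m^r}(\lambda)=\Bigl(\prod_{j=1}^{m}\phi_{P_j}(\lambda^{r/2})^{a(j,m)}\Bigr)\,\lambda^{E},
\]
where $E=A+\sum_{j=1}^{m}\tfrac{(j-1)(r-2)}{2}\,a(j,m)$ and $A=\sum_{v\in V(P_m^r)}a_{\{v\}}$. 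Since $\phi_{P_0}(\lambda^{r/2})=\lambda^{r/2}$, this is exactly the asserted identity provided $E=\tfrac r2\,a(0,m)$; using the elementary identity $\tfrac r2(j+1)+\tfrac{r-2}{2}(j-1)=(r-1)j+1$, this reduces to proving
\[
A=\bigl(m(r-1)+1\bigr)(K_1+K_2)^{m}-\sum_{j=1}^{m}\bigl((r-1)j+1\bigr)a(j,m).
\]

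The last task is to confirm this value of $A$ by counting degrees. On one hand, since $\phi_{P_m^r}=\prod_H\varphi_H^{a_H}$ with each $\varphi_H$ monic of degree $|V(H)|$, and a copy of $P_j^r$ has $j(r-1)+1$ vertices, we get $\deg\phi_{P_m^r}=\sum_{H\sqsubseteq P_m^r}|V(H)|\,a_H=A+\sum_{j=1}^{m}\bigl(j(r-1)+1\bigr)a(j,m)$, which is also the degree identity used in the proof of Theorem~\ref{thm_main} through Proposition~\ref{prop_ratiodet}. On the other hand, it is well known that the characteristic polynomial of an $r$-graph on $N$ vertices has degree $N(r-1)^{N-1}$ (see \cite{CoopDut12}); with $N=m(r-1)+1$ and $(r-1)^{r-1}=K_1+K_2$ this gives $\deg\phi_{P_m^r}=\bigl(m(r-1)+1\bigr)(K_1+K_2)^{m}$. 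Equating the two expressions for the degree and using $j(r-1)+1=(r-1)j+1$ yields exactly the displayed value of $A$, hence the stated formula for $a(0,m)$, and the corollary follows. I expect the main obstacle to be the boundary-size bookkeeping for the embedded copies of $P_j^r$ (and keeping the degenerate cases $j=m-1$ and $j=m$ straight), together with checking that the formal half-integer powers of $\lambda$ produced by Proposition~\ref{prop-tree-powertree-matchpoly} reassemble consistently into the integer exponent $\tfrac r2 a(0,m)$; the remaining manipulations are routine.
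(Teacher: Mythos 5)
Your proof is correct and follows exactly the route the paper intends: the paper gives no detailed argument for this corollary, only the instruction to combine Theorem~\ref{thm_main}, Proposition~\ref{prop-tree-powertree-matchpoly} and Theorem~\ref{forestcharmatchpoly}, and your enumeration of the connected subgraphs $P_j^r$, the boundary bookkeeping giving $a(j,m)$ for $1\le j\le m$, and the degree count that pins down $a(0,m)$ carry that plan out correctly. Nothing further is needed.
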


Another important fact from the literature along these lines is a formula for the characteristic polynomial of starlike hypergraphs by Bao et al. in \cite{BaoFanWang}, which can be expressed succinctly by applying Theorem~\ref{thm_main}.

Zhang et al. \cite{Zhang_17}   introduced a polynomial  $\varphi_{\mathcal{H}}:=\sum\limits_{k\geq 0}(-1)^{k}m_{\mathcal{H}}(k)x^{(\nu(\mathcal{H})-k)r}$ of an $r$-uniform hypertree $\mathcal{H}$ and showed that if a scalar is a nonzero eigenvalue of  $\mathcal{H}$ with an
eigenvector  having all elements nonzero, then   it is a root of  $\varphi_{\mathcal{H}}$. Clark and Cooper~\cite{ClarkCooper18} called the  polynomial $\varphi_{\mathcal{H}}$,  the matching polynomial of $\mathcal{H}$ and observed  that for any hypertree,  its matching polynomial divides  the characteristic polynomial and, consequently, posed the following conjecture.

\begin{conj}
  If $H$ is a   subgraph (induced by some vertex subset)  of  an $r$-tree $\mathcal{H}$ for $r\geq 3$, then $\varphi_{H}|\phi_{\mathcal{H}}$ and, in particular, $\phi_{H}|\phi_{\mathcal{H}}$.
\end{conj}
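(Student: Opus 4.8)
The plan is to read everything off the product formula of Theorem~\ref{thm_main}, $\phi_{\mathcal{T}}=\prod_{G\sqsubseteq\mathcal{T}}\varphi_{G}(\lambda)^{a_{G}}$, in which $G$ ranges over connected subgraphs. The first point to record is that $b=(r-1)^{r-1}>c=r^{r-2}$ for every $r\geq 3$: since $(1-1/r)^{r}\geq 1/4$, one has $b/c=(r-1)(1-1/r)^{r-2}\geq r^{2}/(4(r-1))>1$, the last inequality being $(r-2)^{2}>0$. Hence $b,c,b-c$ are positive integers, and since the three edge-sets $E(G)$, $\partial(G)$ and $E(\mathcal{T})\setminus(E(G)\cup\partial(G))$ are pairwise disjoint, every exponent $a_{G}=b^{\,m-e(G)-|\partial(G)|}c^{\,e(G)}(b-c)^{|\partial(G)|}$ is a positive integer. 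Write $a_{G}(\mathcal{T})$ for this exponent and $a_{G}(K)$ for the analogous exponent obtained by applying Theorem~\ref{thm_main} to a sub-$r$-tree $K$. I will establish: (i) $\varphi_{H}\mid\phi_{\mathcal{T}}$ for every vertex-induced subgraph $H$ and all $r\geq 3$; (ii) $\phi_{H}\mid\phi_{\mathcal{T}}$ whenever $H$ is connected, for all $r\geq 3$; (iii) $\phi_{H}\mid\phi_{\mathcal{T}}$ for every $H$ when $r\geq 4$; and (iv) an explicit $3$-tree and disconnected $H$ with $\phi_{H}\nmid\phi_{\mathcal{T}}$, which shows the conjecture must be corrected for $r=3$.

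For (i), write $H=\mathcal{T}[V']$ with connected components $H_{1},\dots,H_{k}$ (isolated vertices counted as single-vertex components); by Theorem~\ref{thm_matchingpoly}, $\varphi_{H}=\prod_{i}\varphi_{H_{i}}$. Each $H_{i}$ is a connected subgraph of $\mathcal{T}$, and the $H_{i}$ are pairwise distinct as subgraphs because they are pairwise vertex-disjoint; so each $\varphi_{H_{i}}$ occurs as a separate factor $\varphi_{H_{i}}^{a_{H_{i}}(\mathcal{T})}$ of $\phi_{\mathcal{T}}$ with exponent $\geq 1$, and comparing the multiplicity of each irreducible factor yields $\varphi_{H}\mid\phi_{\mathcal{T}}$. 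For (ii), a connected subgraph $H$ of a hypertree is again an $r$-tree, and it is simultaneously edge- and vertex-induced (an extra edge of $\mathcal{T}$ contained in $V(H)$ would close a cycle); so Theorem~\ref{thm_main} applies to $H$, and it suffices to show $a_{G}(\mathcal{T})\geq a_{G}(H)$ for every $G\sqsubseteq H$. A direct computation gives
\[
\frac{a_{G}(\mathcal{T})}{a_{G}(H)}=b^{\,(m-e(H))-\delta}\,(b-c)^{\delta},\qquad \delta:=|\partial_{\mathcal{T}}(G)|-|\partial_{H}(G)|=|\partial_{\mathcal{T}}(G)\setminus E(H)|,
\]
and since $\partial_{\mathcal{T}}(G)\setminus E(H)\subseteq E(\mathcal{T})\setminus E(H)$ we have $0\leq\delta\leq m-e(H)$, so with $b\geq 1$ and $b-c\geq 1$ the ratio is $\geq 1$; this settles (ii) for every $r\geq 3$.

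For (iii) the extra ingredient is the standard formula for the characteristic polynomial of a disconnected uniform hypergraph (Cooper--Dutle~\cite{CoopDut12}): if $H$ has components $H_{1},\dots,H_{k}$, then $\phi_{H}=\prod_{i}\phi_{H_{i}}^{\,(r-1)^{n_{H}-n_{H_{i}}}}$ with $n_{H}=|V(H)|$, $n_{H_{i}}=|V(H_{i})|$. Combining this with (ii) and the fact that a connected $G$ lies in at most one $H_{i}$, it suffices to prove $a_{G}(\mathcal{T})\geq (r-1)^{\,n_{H}-n_{H_{i}}}\,a_{G}(H_{i})$ for every connected $G\subseteq H_{i}$. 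Here one uses: $n_{H}-n_{H_{i}}=(r-1)\sum_{j\neq i}e(H_{j})+(k-1)$ and $(r-1)(m-e(H))=|V(\mathcal{T})\setminus V(H)|+(k-1)$, both routine from Lemma~\ref{hypertree-order-size}; and the bound $b-c\geq (r-1)^{r-2}$, which is equivalent to $(r/(r-1))^{r-2}\leq r-2$ and holds exactly for $r\geq 4$. Estimating $a_{G}(\mathcal{T})/a_{G}(H_{i})=b^{\,(m-e(H_{i}))-\delta_{i}}(b-c)^{\delta_{i}}\geq (r-1)^{\,(r-1)(m-e(H_{i}))-\delta_{i}}$ with $\delta_{i}=|\partial_{\mathcal{T}}(G)\setminus E(H_{i})|$, the whole inequality collapses to $\delta_{i}\leq|V(\mathcal{T})\setminus V(H)|$. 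This is the crucial graph-theoretic step: a boundary edge of $G$ in $\mathcal{T}$ not in $H_{i}$ must contain a vertex outside $V(H)$ (otherwise it is an edge of $H$ meeting $V(H_{i})$, forcing it into $H_{i}$), and two such edges cannot share a vertex outside $V(H)$ — if they did, linearity of $\mathcal{T}$ would give either two edges meeting in two vertices or a cycle closed through their common vertices in $V(G)$.

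Finally (iv): for $r=3$ one has $b-c=1<2=(r-1)^{r-2}$, so the estimate in (iii) sacrifices a factor of $2$ per boundary edge counted by $\delta_{i}$, and this gap is genuine. Let $\mathcal{T}=P_{3}^{3}$ be the loose path with edges $e_{1}=\{0,1,2\}$, $e_{2}=\{2,3,4\}$, $e_{3}=\{4,5,6\}$, and let $H=\mathcal{T}[\{0,1,2,4,5,6\}]=e_{1}\sqcup e_{3}$, the disjoint union of the two vertex-disjoint end edges. Since $\phi_{e}=(\lambda^{3}-1)^{3}\lambda^{3}$ for a single $3$-edge $e$ (Theorem~\ref{thm_main} with one edge), the disjoint-union formula gives $\phi_{H}=\phi_{e}^{\,2^{3}}\cdot\phi_{e}^{\,2^{3}}$, so $(\lambda^{3}-1)^{48}\mid\phi_{H}$; on the other hand, among the connected subgraphs of $P_{3}^{3}$ only the three single edges have matching polynomial divisible by $\lambda^{3}-1$ (one checks $\lambda^{3}-1\nmid\lambda^{5}-2\lambda^{2}$ and $\lambda^{3}-1\nmid\lambda^{7}-3\lambda^{4}+\lambda$), contributing total multiplicity $a_{\{e_{1}\}}(\mathcal{T})+a_{\{e_{2}\}}(\mathcal{T})+a_{\{e_{3}\}}(\mathcal{T})=12+3+12=27<48$. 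Hence $(\lambda^{3}-1)^{48}\nmid\phi_{\mathcal{T}}$ and $\phi_{H}\nmid\phi_{\mathcal{T}}$. The main obstacle of the whole argument is Part (iii): recognizing that for disconnected $H$ the correct target is the system of $(r-1)^{n_{H}-n_{H_{i}}}$-th powers forced by the disjoint-union formula (rather than a bare product of the $\phi_{H_{i}}$), then pushing the exponent arithmetic through — above all the injectivity bound $\delta_{i}\leq|V(\mathcal{T})\setminus V(H)|$ — and pinning down that $r=3$ is exactly where the needed estimate $b-c\geq (r-1)^{r-2}$ breaks.
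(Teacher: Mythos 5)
Your proposal is correct and follows the same overall strategy as the paper's Corollary~\ref{coop-thm} and Example preceding it: read all exponents off Theorem~\ref{thm_main}, use Theorem~\ref{thm_matchingpoly} (resp.\ Theorem~\ref{thm_UNION}) to reduce the matching (resp.\ characteristic) polynomial of a disconnected $H$ to its components, and exhibit a disconnected counterexample for $r=3$. Your parts (i) and (ii) are essentially identical to the paper's (the paper also reduces the connected case to the containments $\partial'(H_1)\subseteq\partial(H_1)$ and the corresponding inclusion of edge complements). The genuine divergence is in the $r\geq 4$ disconnected case: the paper proves only $\phi_{\mathcal{T}-v}\mid\phi_{\mathcal{T}}$ for a single deleted vertex, via the two ratios $\tfrac{1}{r-1}$ and $\tfrac{(r-1)^{r-2}}{(r-1)^{r-1}-r^{r-2}}$, and then asserts the general case "since $v$ was arbitrarily chosen" --- an iteration that is not immediate, because $\mathcal{T}-v$ is no longer an $r$-tree and Theorem~\ref{thm_main} does not apply to it directly. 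Your one-shot comparison of $\phi_H$ with $\phi_{\mathcal{T}}$ for an arbitrary vertex-induced $H$, resting on the identity $(r-1)(m-e(H_i))=|V(\mathcal{T})\setminus V(H)|+(n_H-n_{H_i})$ and the injection $\partial_{\mathcal{T}}(G)\setminus E(H_i)\hookrightarrow V(\mathcal{T})\setminus V(H)$ (which correctly uses both that $H$ is vertex-induced and that $\mathcal{T}$ is linear and acyclic), closes that gap and is the cleaner argument; the threshold inequality $b-c\geq(r-1)^{r-2}$ iff $r\geq4$ is the same one the paper uses in reciprocal form. Your counterexample $P_3^3$ minus its middle vertex (multiplicity $48$ of $\lambda^3-1$ in $\phi_H$ versus $12+3+12=27$ in $\phi_{\mathcal{T}}$) is smaller than the paper's $11$-vertex example and checks out; note only that verifying non-divisibility does require, as you do, summing contributions over the several connected subgraphs sharing the factor $\lambda^3-1$, whereas all the divisibility directions follow from termwise exponent comparison alone.
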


Next we observe that the second claim in the above conjecture holds assuming certain conditions. For this we need the  following result.

\begin{theorem}[\cite{CoopDut12}] \label{thm_UNION}
Let $H$ be an  $r$-graph that is the disjoint union of hypergraphs $H_1$ and $H_2$. Then
\[
 \phi_{H}(\lambda)= \phi_{H_1}(\lambda)^{(r-1)^{|H_2|}}\phi_{H_2}(\lambda)^{(r-1)^{|H_1|}}.
\]
\end{theorem}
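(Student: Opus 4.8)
The plan is to read the identity off the resultant description of the characteristic polynomial. Write $V(H)=V(H_1)\sqcup V(H_2)$ with $s:=|H_1|$, $t:=|H_2|$, $s+t=n+1$, and order the vertices so that all vertices of $H_1$ precede all vertices of $H_2$. For $v\in V(H)$ put $F_v:=\lambda x_v^{r-1}-\sum_{e\in E_v(H)}x_{e\setminus\{v\}}$, so $\phi_H(\lambda)=\mathrm{Res}(F_{v_0},\dots,F_{v_n})$. Because $H$ is a disjoint union, no edge meets both parts, so for $v\in V(H_1)$ we have $E_v(H)=E_v(H_1)$ and $F_v$ involves only the variables indexed by $V(H_1)$, and symmetrically for $v\in V(H_2)$. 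Thus $(F_v)_v$ is a \emph{block-separated} system of $s+t$ forms of the common degree $d=r-1$, whose first block of $s$ forms is exactly the system computing $\phi_{H_1}(\lambda)$ and whose last block of $t$ forms is exactly the system computing $\phi_{H_2}(\lambda)$.

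The heart of the argument is the block-multiplicativity of the resultant: for forms $f_1,\dots,f_s\in\mathbb{C}[x_1,\dots,x_s]$ and $g_1,\dots,g_t\in\mathbb{C}[y_1,\dots,y_t]$, all homogeneous of a common degree $d$, one has, regarding all of them as forms in the $s+t$ combined variables,
\[
\mathrm{Res}(f_1,\dots,f_s,g_1,\dots,g_t)=\pm\,\mathrm{Res}(f_1,\dots,f_s)^{\,d^{\,t}}\,\mathrm{Res}(g_1,\dots,g_t)^{\,d^{\,s}}.
\]
Specializing the $f_\bullet$ and $g_\bullet$ to the two hypergraph subsystems and taking $d=r-1$, $s=|H_1|$, $t=|H_2|$ yields $\phi_H(\lambda)=\pm\,\phi_{H_1}(\lambda)^{(r-1)^{|H_2|}}\phi_{H_2}(\lambda)^{(r-1)^{|H_1|}}$. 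The sign is forced to $+1$ by a leading-term comparison: the characteristic polynomial of an order-$r$, dimension-$N$ adjacency tensor is monic of degree $N(r-1)^{N-1}$ (the top term coming from the ``empty'' tensor, whose resultant is $\lambda^{N(r-1)^{N-1}}$), and a direct count gives $\deg\big(\phi_{H_1}^{(r-1)^{|H_2|}}\phi_{H_2}^{(r-1)^{|H_1|}}\big)=(|H_1|+|H_2|)(r-1)^{n}=(n+1)(r-1)^{n}=\deg\phi_H$, with matching monic leading coefficients.

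For the block-multiplicativity identity itself I would work with the Macaulay-type construction of Section~2.1 together with Proposition~\ref{prop_ratiodet}. Order the combined variables so that the $y$-variables come last, form $S^{(s+t-1,\,d')}$ with $d'=(d-1)(s+t)+1$, its partition into the sets $S_i$, and the associated square coefficient matrix. The key observation is that the rows coming from the $g$-equations only ever multiply forms in the $y$-variables; grouping rows and columns according to the ``$x$-part'' of each monomial of total degree $d'$ turns the coefficient matrix into a direct sum, over monomials in the $x$-variables, of Macaulay matrices for the $g$-subsystem, together with a complementary contribution built from the $f$-subsystem. Hence the determinant $D_{s+t-1}$ factors as a product of copies of the Macaulay determinant of the $g$-subsystem and copies of that of the $f$-subsystem; performing the same grouping on the reduced determinant $D'_{s+t-1}$ and passing to the quotient via Proposition~\ref{prop_ratiodet} exhibits $\mathrm{Res}$ of the combined system as the asserted product of powers of $\mathrm{Res}(f_\bullet)$ and $\mathrm{Res}(g_\bullet)$. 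The exponents $d^t$ and $d^s$ can be read off by counting how many $x$-monomials (resp.\ $y$-monomials) fill a given block, or, more robustly, from the known homogeneity degrees of the resultant in the coefficients of each $F_i$ recorded in Section~2.1.

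The step I expect to be the main obstacle is exactly this last combinatorial bookkeeping: making the block decomposition of the Macaulay matrix \emph{exact} (a genuine block-diagonal form after the correct \emph{simultaneous} row/column reindexing, not merely block-triangular), identifying the diagonal blocks on the nose with Macaulay matrices of the subsystems, and—crucially—tracking the extraneous factors in $D_{s+t-1}$ and in $D'_{s+t-1}$ so that they cancel and the quotient is \emph{exactly} the stated product with no leftover. A cleaner alternative that trades this bookkeeping for a genericity argument is to prove the universal identity by induction on $t$ via the Poisson/product formula for resultants—peeling off one $g$-equation and one $y$-variable at a time, each step being transparent because for generic coefficients the relevant subsystems have only finitely many projective zeros—and then to invoke the fact that an identity between polynomials in the coefficients which holds on a Zariski-dense set holds identically; the hypergraph coefficients are then a legitimate specialization provided the relevant $D'\neq 0$, which is where Proposition~\ref{prop_ratiodet} re-enters.
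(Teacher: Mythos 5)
This theorem is quoted in the paper from Cooper and Dutle \cite{CoopDut12} without proof, so there is no in-paper argument to compare against; the relevant benchmark is the original proof, which indeed proceeds exactly through the lemma you isolate: for two systems of forms of common degree $d$ in disjoint variable sets, $\mathrm{Res}(f_1,\dots,f_s,g_1,\dots,g_t)=\pm\,\mathrm{Res}(f_\bullet)^{d^{t}}\,\mathrm{Res}(g_\bullet)^{d^{s}}$. Your reduction of the theorem to that lemma, the degree count $(s+t)(r-1)^{s+t-1}$, and the monicity argument fixing the sign are all correct.

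The genuine weak point is your primary proposed proof of the lemma. The claim that ``the rows coming from the $g$-equations only ever multiply forms in the $y$-variables'' is false: in the Macaulay construction the multiplier $x^{\alpha}/x_i^{d_i}$ ranges over monomials of degree $d'-d$ in \emph{all} $s+t$ variables. What is true is that a $g$-row has all its nonzero entries in columns sharing a fixed $x$-part, while an $f$-row has all its nonzero entries in columns sharing a fixed $y$-part. These are two incompatible groupings --- the $f$-rows act ``horizontally'' and the $g$-rows ``vertically'' on the grid of (x-part, y-part) pairs --- so no single simultaneous reindexing makes the coefficient matrix a direct sum of subsystem Macaulay matrices, and the determinant $D_{s+t-1}$ does not factor in the advertised way; the same obstruction hits $D'_{s+t-1}$, so the quotient of Proposition~\ref{prop_ratiodet} cannot be evaluated blockwise. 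The repair is the argument you only gesture at in your last sentence about homogeneity degrees, and it is cleaner than either of your two routes: restricted to block-separated systems, the combined system has a nontrivial projective zero if and only if one of the subsystems does (extend a zero of $f_\bullet$ by $y=0$, or vice versa), so the vanishing locus of the restricted resultant is $V(\mathrm{Res}(f_\bullet))\cup V(\mathrm{Res}(g_\bullet))$; since each subsystem resultant is irreducible in its own universal coefficients and the two involve disjoint coefficient sets, unique factorization forces $\mathrm{Res}(f_\bullet,g_\bullet)=c\,\mathrm{Res}(f_\bullet)^{a}\mathrm{Res}(g_\bullet)^{b}$. The exponents follow from the fact that the $(s+t)$-variable resultant of forms of degree $d$ is homogeneous of degree $d^{s+t-1}$ in the coefficients of each input form, so $a\,d^{s-1}=d^{s+t-1}$ gives $a=d^{t}$ (and symmetrically $b=d^{s}$), and $c=1$ by evaluating at $f_i=x_i^{d}$, $g_j=y_j^{d}$, where all three resultants equal $1$. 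With that substitution for the key lemma, your proof is complete; your Poisson-formula alternative would also work but carries avoidable genericity bookkeeping.
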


A counterexample for the second claim in the above conjecture is  presented in the example below.

\begin{example}
Consider the 3-graph  $\mathcal{H}=([11], E)$ in Figure 2. Consider the subgraph $\mathcal{H}':=\mathcal{H}-7$, obtained from   $\mathcal{H}$ by deleting the vertex 7.  Note that $\mathcal{H}'$ consists of six components, four of which are a single vertex and two of which are a single edge.

Applying Theorem~\ref{thm_UNION} and Theorem~\ref{thm_main}, by a straight computation, we have
\begin{align*}
  \phi_{\mathcal{H}'}(\lambda) &=  \prod  \phi_{H}(\lambda)^{(r-1)^{|\overline{H}|}}=\lambda^{2816}(\lambda^3-1)^{768},
\end{align*}
where the product is over all components $H$ of $\mathcal{H}'$, and $\overline{H}$ is the set of all vertices in $\mathcal{H}'$ but not in $H$.
Obviously,  $\phi_{\mathcal{H}'}(\lambda)$ doesn't divide $ \phi_{\mathcal{H}}(\lambda)$ as given in \eqref{chpoly}.
\end{example}

We now restate the above conjecture and  provide a proof relying on Theorem~\ref{thm_main}.

\begin{coro}\label{coop-thm}
Suppose $H$ is a  subgraph (induced by some vertex subset) of  an $r$-tree $\mathcal{T}$ for some $r\geq 3$ Then $\varphi_{H}|\phi_{\mathcal{T}}$. Furthermore we   have that  $\phi_{H}|\phi_{\mathcal{T}}$ if either $r\geq4$ or $H$ is connected when $r=3$.
\end{coro}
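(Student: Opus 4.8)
Throughout, $H$ is the given subgraph of the $r$-tree $\mathcal{T}$, and I write $\partial_{\mathcal{G}}(G)$ for the boundary of a subgraph $G$ taken inside an ambient hypergraph $\mathcal{G}$. The plan is to read both divisibilities off the factorization in Theorem~\ref{thm_main}, supplemented by the disjoint-union identities for the matching polynomial (Theorem~\ref{thm_matchingpoly}) and for the characteristic polynomial (Theorem~\ref{thm_UNION}), together with two small structural facts about hypertrees. The first thing I would record is a positivity observation: for every $r\ge 3$ the elementary inequality $(r-1)^{r-1}>r^{r-2}$ holds, i.e. $b>c\ge 1$, so $b-c$ is a \emph{positive} integer; consequently every exponent $a_H$ in Theorem~\ref{thm_main} is a positive integer. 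In particular $\varphi_G\mid\phi_{\mathcal{T}}$ for every connected subgraph $G$ of $\mathcal{T}$. For general $H$ induced by a vertex subset, write $H$ as the disjoint union of its connected components $H_1,\dots,H_s$; these are pairwise distinct connected subgraphs of $\mathcal{T}$, so by Theorem~\ref{thm_matchingpoly} one has $\varphi_H=\prod_i\varphi_{H_i}$, and since $a_{H_i}\ge 1$ this divides $\prod_i\varphi_{H_i}^{a_{H_i}}$, hence divides $\phi_{\mathcal{T}}$. This settles the first assertion for all $r\ge 3$.

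For $\phi_H\mid\phi_{\mathcal{T}}$ I would split on connectivity. If $H$ is connected it is itself an $r$-tree (a connected acyclic $r$-uniform hypergraph), so Theorem~\ref{thm_main} applies to $H$ and expresses $\phi_H$ as a product of $\varphi_G^{a_G(H)}$ over connected $G\sqsubseteq H$, the exponents being computed inside $H$. Each such $G$ is also a connected subgraph of $\mathcal{T}$; moreover a boundary edge of $G$ taken in $H$ is also one taken in $\mathcal{T}$, while a boundary edge of $G$ in $\mathcal{T}$ that does not lie in $H$ is one of the $e(\mathcal{T})-e(H)$ edges of $\mathcal{T}$ outside $H$. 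Writing $\delta=|\partial_{\mathcal{T}}(G)|-|\partial_H(G)|$, one then has $0\le\delta\le e(\mathcal{T})-e(H)$ and
\[
\frac{a_G(\mathcal{T})}{a_G(H)}=b^{\,(e(\mathcal{T})-e(H))-\delta}\,(b-c)^{\delta}\ \ge\ 1,
\]
since $b\ge b-c\ge 1$. Comparing exponents term by term gives $\phi_H\mid\phi_{\mathcal{T}}$; this already covers $r=3$ with $H$ connected.

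Now suppose $H$ is disconnected and $r\ge 4$. Iterating Theorem~\ref{thm_UNION} over the components $H_1,\dots,H_\kappa$ of $H$ yields $\phi_H=\prod_i\phi_{H_i}^{(r-1)^{|H|-|H_i|}}=\prod_i\prod_{G\sqsubseteq H_i}\varphi_G^{\,a_G(H_i)\,(r-1)^{|H|-|H_i|}}$. A connected subgraph $G$ lies in exactly one component $H_i$, so it suffices to show $a_G(H_i)\,(r-1)^{|H|-|H_i|}\le a_G(\mathcal{T})$ for that $H_i$. Using Lemma~\ref{hypertree-order-size} componentwise one has $|H|=(r-1)e(H)+\kappa$, hence $|H|-|H_i|=(r-1)(e(H)-e(H_i))+(\kappa-1)$; substituting $b=(r-1)^{r-1}$ and cancelling common factors, the required inequality collapses to $(r-1)^{\,t-(r-1)(f-q)}\le(b-c)^{q}$, where $t=\kappa-1$, $f=e(\mathcal{T})-e(H)$, and $q=|\partial_{\mathcal{T}}(G)|-|\partial_{H_i}(G)|$ is the number of boundary edges of $G$ in $\mathcal{T}$ lying outside $H$.

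The crux is then two structural facts. First, deleting $f$ edges from an $r$-tree leaves exactly $1+(r-1)f$ components (compare vertex and edge counts via Lemma~\ref{hypertree-order-size}), and $H$ is obtained from this hyperforest by deleting the $\rho:=|V(\mathcal{T})\setminus V(H)|$ vertices that are isolated there, so $t=(r-1)f-\rho$. Second, because $\mathcal{T}$ is acyclic, two distinct boundary edges of the connected graph $G$ that lie outside $H$ cannot meet the same deleted vertex --- such a pair, joined by a path inside $G$, would close a cycle of $\mathcal{T}$ --- hence $q\le\rho$, and therefore $t+q\le(r-1)f$. Feeding this back gives $t-(r-1)(f-q)\le(r-2)q$, so the inequality follows from $(r-1)^{r-2}\le b-c$, equivalently $r-2\ge\bigl(1+\tfrac1{r-1}\bigr)^{r-2}$, which holds exactly because $r\ge 4$ (and fails at $r=3$, in agreement with the counterexample above). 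I expect this last paragraph --- extracting the sharp bound $t+q\le(r-1)f$ from acyclicity, and then the clean arithmetic inequality $(r-1)^{r-2}\le b-c$ that separates $r=3$ from $r\ge 4$ --- to be the main obstacle; everything else is bookkeeping with the exponent formula of Theorem~\ref{thm_main}.
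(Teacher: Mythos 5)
Your proof is correct. The first assertion and the connected case of the second are essentially the paper's own argument: positivity of $b-c$ for $r\ge 3$ gives $\varphi_G\mid\phi_{\mathcal{T}}$ for connected $G$ (then Theorem~\ref{thm_matchingpoly} handles components), and your exponent ratio $b^{(e(\mathcal{T})-e(H))-\delta}(b-c)^{\delta}$ with $0\le\delta\le e(\mathcal{T})-e(H)$ is exactly the paper's termwise comparison for connected $H$. Where you genuinely diverge is the disconnected case for $r\ge 4$. The paper reduces to showing $\phi_{\mathcal{T}-v}\mid\phi_{\mathcal{T}}$ for a single deleted vertex $v$: acyclicity gives that $v$ lies in at most one boundary edge of a given connected $G$, so the exponent ratio is explicitly $\tfrac{1}{r-1}$ or $\tfrac{(r-1)^{r-2}}{(r-1)^{r-1}-r^{r-2}}$, and the general statement is then obtained by deleting vertices one at a time. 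You instead compare $\phi_H$ with $\phi_{\mathcal{T}}$ in one shot, which requires your two structural facts $t=(r-1)f-\rho$ and $q\le\rho$; the latter is the global analogue of the paper's ``$v$ meets at most one boundary edge of $G$'' observation, proved by the same no-cycle argument, and both routes bottom out at the identical threshold inequality $(r-1)^{r-2}\le(r-1)^{r-1}-r^{r-2}$, valid precisely for $r\ge 4$. Your version is arguably more self-contained: the paper's iteration tacitly requires rerunning the exponent comparison on the hyperforest $\mathcal{T}-v$ (Theorem~\ref{thm_main} as stated applies to connected $r$-trees), a step it does not spell out, whereas you only ever invoke Theorem~\ref{thm_main} on $\mathcal{T}$ and on the connected components $H_i$, at the cost of the extra component and boundary bookkeeping in your final paragraph.
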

\begin{proof}
For the first statement, it suffices to show the validity of $\varphi_{H}|\phi_{\mathcal{T}}$ for the matching polynomial $\varphi_{H}$ as defined in \eqref{e-matchingpoly} and for a connected subgraph  $H$ of $\mathcal{T}$ by Theorem~\ref{thm_matchingpoly}.
It  is easy to prove that for any $r\geq3$,
\[
b-c=(r-1)^{r-1}-r^{r-2}>0.
\]
Thus for any connected subgraph $H$ of $r$-tree $\mathcal{T}$  with $r\geq3$,
\[
a_H=(r-1)^{(r-1)(m-e(H)-|\partial(H)|)}r^{(r-2)e(H)}((r-1)^{r-1}-r^{r-2})^{|\partial(H)|}>0,
\]
and so $\varphi_{H}(\lambda)|\phi_{\mathcal{T}}(\lambda)$, by Theorem~\ref{thm_main}.

Now we prove the second statement. Assume first that $r\geq4$.  It suffices to show that   $\phi_{\mathcal{T}-v}(\lambda)|\phi_{\mathcal{T}}(\lambda)$ for any $v\in \mathcal{T}$.
For any connected subgraph $H$ of   $\mathcal{T}-v$, use $a'_H$ and  $a_H$ to denote the exponent of  $\varphi_{H}(\lambda)$  as a factor in   $\phi_{\mathcal{T}-v}(\lambda)$ and $\phi_{\mathcal{T}}(\lambda)$, respectively. Denote by $H_1$ the component to which $H$ belongs in $\mathcal{T}-v$, and by $H_2$ the union of other components  of $\mathcal{T}-v$. By Theorem~\ref{thm_UNION}, we have
\[
 \phi_{\mathcal{T}-v}(\lambda)= \phi_{H_1}(\lambda)^{(r-1)^{|H_2|}}\phi_{H_2}(\lambda)^{(r-1)^{|H_1|}}.
\]
Together  with Theorem~\ref{thm_main},  we know that
\begin{align*}
  a'_H &= (r-1)^{(r-1)(e(H_1)-e(H)-|\partial'(H)|)+|H_2|}c^{e(H)}(b-c)^{|\partial'(H)|},  \\
a_H &= (r-1)^{(r-1)(m-e(H)-|\partial(H)|)}c^{e(H)}(b-c)^{|\partial(H)|},
 \end{align*}
where $\partial'(H)$ is the boundary of $H$ in $H_1$, and $m$ is the size of $\mathcal{T}$.  Since $\mathcal{T}$ has no cycles, $v$ is incident to at most one edge in the boundary $\partial(H)$, and then $|\partial(H)|-|\partial'(H)|=1$ or 0 depending on if $v$ is in $\partial(H)$ or not.

Note that $|H_1|+|H_2|=|\mathcal{T}|-1$. By Lemma~\ref{hypertree-order-size},
\[
(r-1)e(H_1)+|H_2|=|H_1|-1+|H_2|=|\mathcal{T}|-2=(r-1)m-1,
\]
 and so
\[
\frac{a'_H}{a_H}=\left\{
                   \begin{array}{ll}
                     \frac{1}{r-1}, & \hbox{if $|\partial(H)|-|\partial'(H)|=0$;} \\
                      \frac{(r-1)^{r-2}}{(r-1)^{r-1}-r^{r-2}}, & \hbox{if $|\partial(H)|-|\partial'(H)|=1$.}
                   \end{array}
                 \right.
\]
Obviously $\frac{1}{r-1}<1$ for $r\geq4$. Observe that  when $r=4$, $\frac{(r-1)^{r-2}}{(r-1)^{r-1}-r^{r-2}}=\frac{9}{11}<1$ and when $r\geq5$,
\[
 \frac{(r-1)^{r-2}}{(r-1)^{r-1}-r^{r-2}}= \frac{1}{r-1-(1+\frac{1}{r-1})^{r-2}}< \frac{1}{r-1-e}<1,
\]
where $e$ is the base of the natural logarithm.
Thus for any subgraph $H$ of $r$-tree $\mathcal{T}$ with $r\geq4$, we have $a'_H\leq a_H$ and so $\phi_{\mathcal{T}-v}(\lambda)|\phi_{\mathcal{T}}(\lambda)$. Since $v$ was arbitrarily chosen, the characteristic polynomial of any subgraph of  $\mathcal{T}$ divides that of  $\mathcal{T}$ when $r\geq4$.

Assume now that $H$ is a connected subgraph of $\mathcal{T}$.  For any connected subgraph $H_1$ of $H$, use $a_{H_1}$ and $b_{H_1}$ to denote the exponent of $\varphi_{H_1}(\lambda)$ as a factor in $\phi_{\mathcal{T}}(\lambda)$ and $\phi_{H}(\lambda)$, respectively.
By Theorem~\ref{thm_main},
\begin{align*}
  a_{H_1} &= b^{m-e(H_1)-|\partial(H_1)|}c^{e(H_1)}(b-c)^{|\partial(H_1)|},  \\
b_{H_1}  &= b^{e(H)-e(H_1)-|\partial'(H_1)|}c^{e(H)}(b-c)^{|\partial'(H_1)|},
 \end{align*}
where $\partial'(H_1)$ is the boundary of $H_1$ in $H$, $m$ is the size of $\mathcal{T}$. Note that $\partial'(H_1)\subseteq \partial(H_1)$ and $E(H)\backslash(E(H_1)\cup\partial'(H_1))\subseteq E(\mathcal{T})\backslash(E(H_1)\cup\partial(H_1))$, and so $ a_{H_1}\geq b_{H_1} $, which implies that $\varphi_{H_1}(\lambda)^{b_{H_1} }|\phi_{\mathcal{T}}(\lambda)$.  Therefore  $\phi_{H}(\lambda)|\phi_{\mathcal{T}}(\lambda)$ since $H_1$ is an arbitrarily chosen subgraph of $H$.
\end{proof}

The nullity of an ordinary 2-graph has been investigated extensively as it is an important algebraic parameter for graphs, and has intriguing applications in chemistry and physics (see \cite{WangGuo} and references therein). For a given 2-graph $G$, the nullity of $G$ is defined as the nullity of its adjacency matrix. Here we extend  the notion of nullity from graphs to hypergraphs. The \textit{nullity} of an $r$-graph $\mathcal{H}$, denoted $\eta(\mathcal{H})$, is defined to be the multiplicity of the eigenvalue 0 as a root of the characteristic polynomial of $\mathcal{H}$.  A well-known result on the matching number of 2-trees and its corresponding nullity is as follows.

\begin{theorem}[\cite{theorygraphspectra}]
If $G$ is a 2-tree on $n$ vertices,  then $\eta(G)=n-2\nu(G)$, where $\nu(G)$ is the size of a maximum matching in $G$ and is called the matching number of $G$.
\end{theorem}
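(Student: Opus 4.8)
The plan is to reduce the nullity formula to a statement about the matching polynomial by means of Theorem~\ref{forestcharmatchpoly}, and then to extract the multiplicity of the zero eigenvalue directly from the shape of that polynomial. In the terminology adopted here a 2-tree is a 2-uniform hypertree, hence an acyclic ordinary graph; in particular it is triangle-free, so the classical identity $\phi_G(\lambda)=\varphi_G(\lambda)$ of Theorem~\ref{forestcharmatchpoly} applies to $G$ without modification. (The same identity is recovered from the main Theorem~\ref{thm_main} by specializing to $r=2$, since then $b-c=0$ forces every factor $\varphi_H$ with $H\neq G$ to drop out, leaving $a_G=1$.) Writing out the matching polynomial, we therefore have
\[
\phi_G(\lambda)=\varphi_G(\lambda)=\sum_{k\geq 0}(-1)^{k}m_G(k)\,\lambda^{\,n-2k},
\]
so that $\eta(G)$, the multiplicity of $0$ as a root of $\phi_G(\lambda)$, equals the multiplicity of $0$ as a root of $\varphi_G(\lambda)$, and the whole question becomes purely combinatorial.

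First I would locate the lowest-degree term of $\varphi_G(\lambda)$. By definition of the matching number we have $m_G(\nu)\geq 1$ for $\nu=\nu(G)$, while $m_G(k)=0$ for all $k>\nu$; hence the sum is supported on $0\leq k\leq \nu$ and its term of least degree is $(-1)^{\nu}m_G(\nu)\,\lambda^{\,n-2\nu}$. The key point is that the exponents $n,\,n-2,\,\ldots,\,n-2\nu$ are pairwise distinct, so distinct summands contribute to distinct powers of $\lambda$ and no cancellation can occur. Consequently the coefficient of $\lambda^{\,n-2\nu}$ is exactly $(-1)^{\nu}m_G(\nu)\neq 0$, and we may factor $\varphi_G(\lambda)=\lambda^{\,n-2\nu}\,g(\lambda)$ with $g(0)\neq 0$. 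This shows that $0$ is a root of $\varphi_G(\lambda)$ of multiplicity precisely $n-2\nu$.

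Combining the two steps yields $\eta(G)=n-2\nu(G)$, as claimed. There is no real analytic obstacle in the argument: the only step demanding care is the verification that the coefficient of $\lambda^{\,n-2\nu}$ does not vanish, and this is immediate once one observes that each power of $\lambda$ occurs in at most one summand of $\varphi_G(\lambda)$, so that distinct summands never interfere. As consistency checks one may note that the formula automatically gives $n-2\nu(G)\geq 0$ (a matching in a tree uses at most $\lfloor n/2\rfloor$ edges) and $\eta(G)=0$ exactly when $G$ admits a perfect matching, both of which are the expected behaviours.
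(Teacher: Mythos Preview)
Your proof is correct. The paper does not actually supply its own proof of this statement: it is quoted as a classical result from Cvetkovi\'c--Doob--Sachs and then immediately generalized to $r$-trees as a corollary of Theorem~\ref{thm_main}. Your argument---invoke $\phi_G=\varphi_G$ from Theorem~\ref{forestcharmatchpoly} and read off the multiplicity of $0$ as the lowest exponent appearing in $\varphi_G$---is exactly the standard derivation. One minor remark: the aside about $G$ being ``triangle-free'' is unnecessary and slightly misplaced, since Theorem~\ref{forestcharmatchpoly} is stated for trees (forests), not for triangle-free graphs; a 2-tree is by definition an ordinary tree, so the hypothesis is met directly without any intermediate step.
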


The above result for 2-trees can be generalized to $r$-trees according to the following result, which is an immediate consequence of  Theorem~\ref{thm_main}.

\begin{coro}
For any $r$-tree $\mathcal{T}$ with $r\geq 2$, we have
\[
\eta(\mathcal{T})=\sum_{H\sqsubseteq \mathcal{T}}a_H(|H|-r\nu(H)),
\]
where the sum is over all connected subgraphs $H$ of $\mathcal{T}$.
\end{coro}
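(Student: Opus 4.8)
The plan is to read off the identity from Theorem~\ref{thm_main} by tracking the multiplicity of the eigenvalue $0$ on both sides. First I would recall that $\eta(\mathcal{T})$ is by definition the multiplicity of $\lambda=0$ as a root of $\phi_{\mathcal{T}}(\lambda)$, i.e.\ the exact power of $\lambda$ dividing $\phi_{\mathcal{T}}(\lambda)$. Since the order of vanishing at $0$ of a product of polynomials equals the sum of the orders of vanishing of the factors, Theorem~\ref{thm_main} gives at once
\[
\eta(\mathcal{T})=\sum_{H\sqsubseteq\mathcal{T}}a_H\,\mu(H),
\]
where $\mu(H)$ denotes the multiplicity of $0$ as a root of $\varphi_H(\lambda)$.

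Next I would identify $\mu(H)$ for a connected subgraph $H$. From the definition \eqref{e-matchingpoly} we have $\varphi_H(\lambda)=\sum_{k\ge 0}(-1)^k m_H(k)\lambda^{|H|-kr}$, so the exponents that actually occur are the numbers $|H|-kr$ with $m_H(k)\neq 0$. The smallest such exponent is attained at the largest value of $k$ for which $m_H(k)\neq 0$, which is precisely the matching number $\nu(H)$; moreover $m_H(\nu(H))>0$, so the coefficient of $\lambda^{|H|-r\nu(H)}$ in $\varphi_H(\lambda)$ is nonzero. Since a maximum matching of $H$ occupies $r\nu(H)\le |H|$ vertices, the exponent $|H|-r\nu(H)$ is nonnegative, and therefore $\mu(H)=|H|-r\nu(H)$. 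Substituting this into the displayed sum produces the claimed formula.

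I do not expect any real obstacle here: the content is entirely carried by Theorem~\ref{thm_main}, and the only auxiliary facts used --- additivity of vanishing orders over products, and nonvanishing of the lowest-degree coefficient of $\varphi_H$ --- are routine. As a consistency check, when $r=2$ one has $b-c=0$, hence $a_H=0$ for every proper connected subgraph $H$ and $a_{\mathcal{T}}=1$, so the sum collapses to $|\mathcal{T}|-2\nu(\mathcal{T})=n-2\nu(\mathcal{T})$, recovering the classical nullity identity for trees stated just above.
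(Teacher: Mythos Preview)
Your argument is correct and is exactly the intended one: the paper does not write out a proof but declares the corollary an immediate consequence of Theorem~\ref{thm_main}, and your derivation---additivity of the order of vanishing at $0$ across the product in \eqref{mainidentity}, together with the observation that the lowest nonzero term of $\varphi_H(\lambda)$ has exponent $|H|-r\nu(H)$---is precisely that immediate consequence. The $r=2$ sanity check you include is also consistent with the paper's remarks.
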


\subsection*{Acknowledgement.}  H. Li is supported by National Natural Science Foundation of China (No  12161047) and CSC. L. Su is supported by National Natural Science Foundation of China (No  12061038) and CSC.  Dr. Fallat's research was supported in part by an NSERC Discovery Research Grant, Application No.: RGPIN-2019-03934.
The work was done while H. Li  and L. Su  visited the   University of Regina to which thanks are given by Li and Su for their hospitality and support.


\begin{thebibliography}{99}



 \bibitem{BaoFanWang}
  Y. Bao, Y. Fan, Y. Wang, and M. Zhu, A combinatorial method for computing characteristic polynomials
of starlike hypergraphs.  {\em J. Algebraic Comb.} 51 (2018) 589-616.

\bibitem{Berge-hypergraph}
C. Berge, Graphs and Hypergraphs, North-Holland Mathematical Library, 2nd Ed., Vol. 6, North-
Holland, 1976.


\bibitem{Biggs-criticalgroup}
N. L. Biggs,
Chip-firing and the critical group of a graph.
{\em J. Algebraic Combin.} 9 (1999), no. 1, 25-45.


\bibitem{BjoLovaShor}
A.  Bj\"orner,  L. Lov\'asz, and P.W. Shor,  Chip-firing games on graphs. {\em Eur. J. Comb.} 12(4) (1991), 283-291.

 \bibitem{Bretto}
A. Bretto, Hypergraph Theory: An Introduction. Springer, 2013.

\bibitem{ChenBu}
L. Chen and C.  Bu,
A reduction formula for the characteristic polynomial of hypergraph with pendant edges.
{\em Linear Algebra Appl.} 611 (2021), 171-186.



\bibitem{ClarkCooper18}
G. J. Clark and J. Cooper,
On the adjacency spectra of hypertrees.
{\em Electron. J. Combin.} 25 (2018), Paper No. 2.48, 8 pp.



\bibitem{ClarkCoop21}
 G. J. Clark and J. Cooper, A Harary-Sachs theorem for hypergraphs. {\em J. Combin. Theory Ser. B} 149 (2021), 1-15.

\bibitem{CoopDut12}
J. Cooper and A. Dutle, Spectra of uniform hypergraphs.  {\em Linear Algebra Appl.}  436 (2012), 3268-3292.


\bibitem{sandpilegroupdualgraph}
R. Cori and D. Rossin,
On the sandpile group of dual graphs.
{\em European J. Combin.} 21 (2000), no. 4, 447-459.



\bibitem{CoxLittleShea}
  D. A. Cox, J. Little,  and D. O'Shea, Using Algebraic Geometry, Graduate Texts in Mathematics, Vol. 185,
second edition, Springer, New York, 2005.

\bibitem{CoxLittleShea-ideals}
D. A. Cox, J. Little, and D. O'Shea,   Ideals, varieties, and algorithms.   Undergraduate Texts in Mathematics. Springer, Cham, 2015.

\bibitem{theorygraphspectra}
D. Cvetkovi\'c, M. Doob,  and H. Sachs. Spectra of graphs: theory and application. Academic Press,
1980.



\bibitem{AlgComb}
C. D. Godsil, Algebraic combinatorics. Chapman and Hall Mathematics Series. Chapman  \& Hall, New York, 1993.


\bibitem{Lim05} L.  Lim,
Singular values and eigenvalues of tensors: a variational approach. In: Proceedings of the IEEE International Workshop on Computational Advances in Multi-Sensor Adaptive Processing (CAMSAP'05), vol.~1 (2005), 129-132.

\bibitem{qi05}  L. Qi,  Eigenvalues of a real supersymmetric tensor. {\em J. Symb. Comput.} 40 (2005), 1302-1324.

\bibitem{QiLuo-2017}
L. Qi and Z. Luo,  Tensor Analysis: Spectral Theory and Special Tensors. SIAM, 2017.

\bibitem{ShaoQiHu}
 J.  Shao, L. Qi, and S. Hu, Some new trace formulas of tensors with applications in spectral hypergraph
theory.  {\em Linear Multilinear Algebra} 63 (2015), 971-992.



\bibitem{SuKLS}
L. Su, L. Kang and H. Li, E. Shan,
The matching polynomials and spectral radii  of uniform supertrees.  {\em Electron. J. Combin.} 25(4) (2018), \#P4.13.


\bibitem{chip-firing}
C. J. Klivans,
The   Mathematics of Chip-Firing.  CRC Press, 2018.




\bibitem{WangGuo}
Z. Wang and J. Guo,  A sharp upper bound of the nullity of a connected graph in terms of order and maximum degree. {\em Linear Algebra Appl.} 584 (2020), 287-293.


\bibitem{Zhang_17}
W. Zhang, L. Kang, E. Shan, and Y. Bai,  The spectra of uniform hypertrees.  {\em Linear Algebra Appl.}  533 (2017), 84-94.

\bibitem{complete3-graph}
Y. Zheng, The characteristic polynomial of the complete 3-uniform hypergraph. {\em Linear Algebra Appl.} 627 (2021), 275-286.


\bibitem{zykov}
A. A. Zykov,  Hypergraphs. Uspehi Mat. Nauk 29 (1974), no. 6 (180), 89-154.

\end{thebibliography}
\end{document}